\documentclass{article}
\usepackage[preprint]{neurips_2026}

\usepackage[utf8]{inputenc}
\usepackage[T1]{fontenc}
\usepackage{url}
\usepackage{booktabs}
\usepackage{nicefrac}
\usepackage{microtype}
\usepackage{blkarray}

\usepackage{amsmath, amsthm, amsfonts, amssymb, mathtools}
\usepackage{graphicx}
\usepackage{enumerate}
\usepackage{enumitem}
\usepackage{algorithm}
\usepackage{algorithmic}
\usepackage{hyperref}
\hypersetup{colorlinks,linkcolor={red},citecolor={olive},urlcolor={red}}
\usepackage{cleveref}
\usepackage{caption}
\usepackage{subcaption}
\usepackage{xcolor}
\usepackage{wrapfig}
\usepackage{pgfplots}
\pgfplotsset{compat=1.18}

\theoremstyle{plain}
\newtheorem{theorem}{Theorem}
\newtheorem{proposition}[theorem]{Proposition}
\newtheorem{lemma}[theorem]{Lemma}
\newtheorem{corollary}{Corollary}[theorem]
\theoremstyle{definition}

\newtheorem{example}[theorem]{Example}
\newtheorem{remark}{Remark}
\newtheorem{assumption}{Assumption}

\DeclareMathOperator{\Span}{span}
\DeclareMathOperator*{\argmin}{arg\,min}
\newcommand{\norm}[1]{\| #1 \|}

\def\R{\mathbb{R}}
\def\C{\mathbb{C}}

\def\tp{\intercal}
\def\X{\mathbf{X}}
\def\x{\mathbf{x}}
\def\Y{\mathbf{Y}}
\def\y{\mathbf{y}}
\def\by{\times}
\def\Koop{\mathcal{K}}
\def\Fc{\mathcal{F}}
\def\Dc{\mathcal{D}}
\def\Mc{\mathcal{M}}
\def\Pc{\mathcal{P}}
\newcommand{\anonrepo}{https://anonymous.4open.science/r/Koopman-PageRank-C8D3/}
\newcommand{\arXivtag}[1][]{#1}

\DeclareCaptionFont{red}{\color{red!50!black}}
\DeclareCaptionFont{green}{\color{green!40!black}}
\title{Finding Koopman Invariant Subspaces via Personalized PageRank}

\author{
	Hyukpyo Hong \\
	Department of Mathematics \\
	University of Wisconsin--Madison \\
	Madison, WI 53706, USA \\
    \texttt{hhong78@wisc.edu}
    \And
    Qin Li \\
	Department of Mathematics \\
	University of Wisconsin--Madison \\
	Madison, WI 53706, USA \\
    \texttt{qinli@math.wisc.edu} \\
	\And
	Matthew J.~Colbrook \\
	DAMTP, University of Cambridge \\
	Cambridge, CB3 0WA, UK \\
    \texttt{m.colbrook@damtp.cam.ac.uk} \\
	\And
	Hanbaek Lyu\thanks{Corresponding author} \\
	Department of Mathematics \\
	University of Wisconsin--Madison \\
	Madison, WI 53706, USA \\
    \texttt{hlyu@math.wisc.edu}
}

\begin{document}
	\maketitle
    \vspace{-5mm}
	\begin{abstract}
		Selecting a finite dictionary of observables whose span is Koopman-invariant is a central challenge in data-driven Koopman operator approximation. We address this problem by exploiting zero-block structure in Extended Dynamic Mode Decomposition (EDMD) matrices. We show that any sub-dictionary whose span is Koopman-invariant induces an exact zero block in the EDMD matrix, even for finite data. We then show that such blocks can be detected by applying PageRank to a row-normalized EDMD matrix constructed from a large initial dictionary. The theory extends to approximately invariant subspaces and yields stronger guarantees for personalized PageRank (PPR) when the seed observables lie inside the target block and reach all observables in that block. Combining EDMD concentration bounds with PageRank perturbation theory gives end-to-end detection guarantees with $O(\arXivtag[1/\sqrt{M}])$ finite-sample scaling and explicit constants. More generally, without assuming an invariant subspace exists, high PPR mass on a sub-dictionary controls discounted multi-step leakage from the seed observables. Numerical experiments on the Duffing oscillator, Van der Pol oscillator, Lorenz system, and a three-well Ramachandran potential suggest that the method identifies compact, interpretable dictionaries with accurate predictions.
	\end{abstract}
	\section{Introduction}\label{sec:intro}
	Predicting long-term behavior of a nonlinear dynamical system $\x_{k+1}=\mathbf{F}(\x_k)$ is fundamentally difficult: trajectories can exhibit sensitive dependence on initial conditions, complex bifurcation structures, and chaotic attractors. 
    The Koopman operator~\cite{koopman1931hamiltonian, Koopman-Neumann1932} offers an alternative perspective by shifting attention from the state $\x \in \Mc \subset \R^d$ to \emph{observables}, i.e., scalar-valued functions $f:\Mc \to \C$. The operator $\Koop$ acts by $\Koop f = f \circ \mathbf{F}$, transforming the nonlinear state-space dynamics into a \emph{linear} evolution in a function space. The price paid for linearity is infinite-dimensionality, but finite-dimensional projections of $\Koop$ still capture physically meaningful dynamical features.
    \arXivtag[Its growing adoption sparked by~\cite{mezic2004PhysicD, mezic2005NonDyn} is reflected in the coinage ``Koopmanism''~\cite{Budisic2012Koopmanism} and thousands of articles over the last decade (see~\cite{Mezic2013AnnRevFluid, Schmid2022DMDreview, Brunton2022SIAMReviews, Colbrook2024-MultiverseDMD, Colbrook2026} for comprehensive reviews). Some popular applications span fluid dynamics~\cite{Mezic2013AnnRevFluid}, system identification~\cite{brunton2016SINDy}, neuroscience~\cite{brunton2016neuro}, and molecular dynamics~\cite{wu2017variational}.]
	
Extended Dynamic Mode Decomposition (EDMD)~\cite{williams2015data} approximates $\Koop$ by projecting onto a finite \emph{dictionary} of observables $\Dc_{\tilde{N}} = \{\psi_1,\ldots,\psi_{\tilde{N}}\}$, yielding an $\tilde{N} \times \tilde{N}$ matrix $K_{\tilde{N},M}$ estimated from $M$ data pairs  $(\mathbf{x}^{(i)},\mathbf{y}^{(i)})_{i=1,\ldots,M}$ via least squares where $\mathbf{y}^{(i)} = \mathbf{F}(\mathbf{x}^{(i)})$. While several convergence results are now well understood~\cite{korda2018convergence, Kostic-NeuRips2023, nuske2023finite, colbrook2024rigorous}, these guarantees depend on the choice of dictionary. \arXivtag[In addition, the choice of dictionary strongly affects performance of the approximation]~\cite{Colbrook2024-MultiverseDMD, otto2021koopman}. This raises a central practical question: \emph{which observables should the dictionary contain?}
	
	\paragraph{The dictionary learning problem.}
	A natural strategy is to start with a large initial dictionary, for example, all polynomials up to a certain degree, or a dictionary of radial basis functions (RBFs), and hope that they capture relevant dynamics. However, using a large dictionary introduces several difficulties. First, when $\tilde{N}$ is large relative to $M$, the EDMD matrix can be poorly conditioned, leading to unstable predictions. Second, a large dictionary obscures interpretability: the practitioner cannot tell which observables are relevant and which are harmful. Third, and most fundamentally, EDMD accuracy is not monotone in dictionary size due to spectral pollution~\cite{Colbrook2024-MultiverseDMD, colbrook2024rigorous}.

	These difficulties motivate the question: \emph{given a large initial dictionary $\Dc_{\tilde{N}}$, how can we select a small sub-dictionary $\Dc_N \subset \Dc_{\tilde{N}}$ that captures essential dynamics?} The ideal sub-dictionary would span a \emph{Koopman invariant subspace} (i.e., a subspace closed under $\Koop$) so that the EDMD matrix on this subspace exactly represents the dynamics without leakage to the complementary observables.
	
	\paragraph{Our approach.}
	We propose an algorithm that selects a sub-dictionary by exploiting a \emph{zero block structure} of an EDMD matrix. This zero block can be efficiently detected by applying the PageRank (PR) algorithm to the row-normalized EDMD matrix, which reinterprets the matrix as a Markov chain on observables and identifies those forming a ``closed community.'' We show that \emph{personalized PR} (PPR), seeded at observables whose prediction matters most, achieves stronger detection guarantees by eliminating the bias-variance tradeoff inherent in standard PR's uniform teleportation. Because the initial dictionary consists of observables with known functional forms (e.g., polynomials, RBFs), the selected sub-dictionary yields an explicit, interpretable lifted dynamical system.
	
	\paragraph{Contributions.}
	\begin{enumerate}[leftmargin=0.5cm]
		\item \textbf{Zero block structure and invariant subspaces.} We show that any sub-dictionary spanning a Koopman invariant subspace produces an exact zero block in the EDMD matrix, even with finitely many data (Theorem~\ref{thm:blockstr}). When a spanned subspace is not exactly invariant, the norm of the off-diagonal block serves as a computable surrogate for prediction error (Propositions~\ref{prop:error-block}, \ref{prop:error-bound}).
		\item \textbf{PR/PPR-based detection.} To detect zero blocks, we propose Algorithm~\ref{algor:PR-EDMD} that reinterprets the row-normalized EDMD matrix as a Markov chain on observables and ranks them via PR or PPR. For an \emph{exact} zero block, PPR succeeds at \emph{any} damping factor $\alpha \in (0,1)$, whereas standard PR requires a mixing condition on the internal block (Theorem~\ref{thm:detection}). 
		For an \emph{approximate} zero block, PPR still succeeds with a range of a damping factor broader than standard PR.
		\item \textbf{Finite-sample guarantees.} Combining the perturbation theory with the EDMD matrix concentration bounds yields end-to-end detection guarantees at rate $O(1/\sqrt{M})$ with explicit constants (Theorem~\ref{thm:end-to-end}). The resulting sample-complexity bounds for both PR and PPR show that PPR requires fewer samples than PR (Corollary~\ref{cor:sample-complexity}).
		\item \textbf{Unconditional leakage bounds.} 
		Even without assuming an invariant subspace exists, we show that multi-step prediction error of any selected sub-dictionary is bounded by how much PPR mass falls outside that sub-dictionary (Proposition~\ref{prop:unconditional}). This gives a principled justification for PPR-based selection purely in terms of prediction accuracy, without any block-structure assumption.  
	\end{enumerate}
	
	\subsection{Related work}\label{subsec:related}
	
	\paragraph{Koopman operator theory and dictionary learning.}
	Dictionary choices range from classical bases (polynomials or RBFs~\cite{williams2015data}), kernelized EDMD~\cite{williams2015data, kawahara2016dynamic}, time-delay embeddings~\cite{arbabi2017ergodic}, to neural-network-learned dictionaries~\cite{alford2022deep, takeishi2017learning, li2017extended, wehmeyer2018time, meng2024Koopman, yeung2019learning}. These approaches focus on constructing expressive dictionaries but do not address sub-dictionary selection from a given large dictionary. Conradie et al.~\cite{conradie2026trustworthy} used a principal angle decomposition for compressing an initial dictionary, selecting modes whose Koopman image remains approximately within the subspace. They also derived error bounds for selected modes, provided a selection criterion, and used multistep bounds to train a dictionary. Haseli and Cort\'{e}s~\cite{haseli2021learning, haseli2023generalizing} proposed a subspace decomposition for iterative dictionary pruning, but provides no sample complexity guarantees. In contrast, our method selects an explicit subset of the original dictionary rather than a compressed linear combination of modes, and it provides perturbation and finite-sample guarantees for the resulting selection rule. Our PPR-based sub-dictionary selection criterion has a principled justification through Markov chain theory.
	
	A complementary line of work approximates the Koopman (or transfer) operator directly by a \emph{stochastic matrix}. Classical Ulam-type discretizations of the Perron--Frobenius operator partition the state space and estimate transition probabilities between cells, yielding a row-stochastic matrix whose stationary and spectral properties approximate those of the transfer operator~\cite{ulam2004problems, LI1976177, Klus2016JCompDyn}. Transition matrices in our work can be viewed as an EDMD-level analogue of this construction: instead of discretizing the state space, we row-normalize (the transpose of) the learned \arXivtag[EDMD] matrix to obtain a Markov chain \emph{on observables} whose PPR structure encodes sub-dictionary selection.
	
	\paragraph{PageRank, block structure detection, and spectral methods.}
	The PR algorithm~\cite{PageRank1998} computes stationary distributions of regularized Markov chains~\cite{langville2004deeper}, with well-studied perturbation theory~\cite{ipsen2006pagerank, meyer1989stochastic}. Our perturbation analysis builds on the theory of nearly completely decomposable\arXivtag Markov chains~\cite{simon1961aggregation, courtois1977decomposability}. PPR, introduced by Haveliwala~\cite{haveliwala2003topic}, was connected to local graph partitioning by Andersen, Chung, and Lang~\cite{andersen2006local}. Block structure detection relates to spectral clustering~\cite{von2007tutorial}, community detection~\cite{fortunato2010community}, and Perron cluster analysis (PCCA/PCCA+)~\cite{deuflhard2005robust, roblitz2013fuzzy}; Lai et al.~\cite{lai2010finding} studied PR for block detection in stochastic matrices. Our approach differs in starting from EDMD matrices (requiring row normalization and its perturbation analysis) and targeting Koopman invariant subspaces rather than communities or metastable states. Classical invariant subspace algorithms~\cite{golub2013matrix, stewart2001matrix} do not exploit the specific block structure that Theorem~\ref{thm:blockstr} guarantees.

	\section{Algorithm}\label{sec:algorithm}
	Given a large initial dictionary $\Dc_{\tilde{N}} = \{\psi_1,\ldots,\psi_{\tilde{N}}\}$, our goal is to select a sub-dictionary $\Dc_N \subset \Dc_{\tilde{N}}$ of $N$ observables that captures\arXivtag essential dynamics.
	We view this sub-dictionary selection problem as a \textit{ranking problem}. Observables receive high PPR scores when they are strongly coupled, through the projected Koopman dynamics, to the seed observables of interest. Our algorithm is essentially a \arXivtag[PPR] algorithm run on a directed network constructed from the EDMD matrix. Before we state our algorithm, we briefly review EDMD and PPR.
	\paragraph{Koopman operator and EDMD.}
	Consider a discrete-time dynamical system $\x_{k+1} = \mathbf{F}(\x_k)$ with $\mathbf{F}:\Mc \to \Mc$ and $\Mc \subset \R^d$. Let $\Fc = L^2(\Mc, \mu)$ where $\mu$ is the data distribution from which data $\mathbf{x}^{(i)}$ is sampled.
	The induced \emph{Koopman operator} $\Koop:\Fc\to\Fc$ is defined by $\Koop f = f\circ\mathbf{F}$ for $f\in \Fc$. Throughout, we assume $\Koop$ is a bounded operator (see Appendix~\ref{app:prelim} for details). 
	Given a \emph{dictionary} of linearly independent observables $\Dc_N = \{\psi_1, \ldots, \psi_N\} \subset \Fc$, let $\Fc_N = \Span\{\psi_1, \ldots, \psi_N\}$.
	
	With $M$ data pairs $(\x^{(i)}, \y^{(i)} = \mathbf{F}(\x^{(i)}))$, let
	\begin{equation}\label{eq:Psi-row-stack}
		\mathbf{\Psi}(\x) = [\psi_1(\x), \ldots, \psi_N(\x)] \in \mathbb{C}^{1\times N},
		\quad
		\mathbf{\Psi}(\X) =\begin{bmatrix} \mathbf{\Psi}(\x^{(1)})^{\tp} & \cdots & \mathbf{\Psi}(\x^{(M)})^{\tp} \end{bmatrix}^{\tp} \in \mathbb{C}^{M\times N},
	\end{equation}
	and similarly $\mathbf{\Psi}(\Y) \in \mathbb{C}^{M\times N}$. The \emph{EDMD matrix} $K_{N,M}$ is a minimizer of the least-squares problem
	\begin{equation}\label{eq:EDMD-mat-formula}
		K_{N,M} \;=\; \mathbf{\Psi}(\X)^{+}\,\mathbf{\Psi}(\Y) \;\in\; \argmin_{\tilde{K} \in \mathbb{C}^{N \times N}} \bigl\|\mathbf{\Psi}(\Y) - \mathbf{\Psi}(\X)\tilde{K}\bigr\|_F^2,
	\end{equation}
    \arXivtag[where $^{+}$ represents the Moore-Penrose pseudo inverse.] 
    Note that if $\Fc_N$ is $\Koop$-invariant, the Frobenius error vanishes and $\mathbf{\Psi}(\X)\,K_{N,M} = \mathbf{\Psi}(\Y)$, so the EDMD matrix \arXivtag[\textit{exactly}] captures the Koopman dynamics on $\Fc_N$. \arXivtag
    
    When the matrix $\mathbf{\Psi}(\X)$ has full column rank, $K_{N,M}$ admits the closed-form expression given by the normal equations (see Appendix~\ref{sec:app-edmd} for details). To ensure this, we impose the following assumption throughout.
	
	\begin{assumption}\label{assume:full-rank}
		For a set of $N$ observables $\{\psi_1, \ldots, \psi_N\}$ and a set of $M$ data points $\{\mathbf{x}^{(1)}, \ldots, \mathbf{x}^{(M)}\}$, the matrix $\mathbf{\Psi}(\X)$ in~\eqref{eq:Psi-row-stack} has full column rank.
	\end{assumption}
	This is a mild condition asserting that the linear independence of the observables carries over to that of their pointwise evaluations at the data points (see Remark~\ref{rem:fullcolrank} for its measure-theoretic form).
	
	\paragraph{Personalized PageRank.}
	Let $Q \in \R^{n \times n}$ be row stochastic, $\mathbf{s} \in \R^n$ a preference vector ($\mathbf{s} \geq 0$, $\mathbf{s}^\tp\mathbf{1}=1$), and $\alpha \in [0,1)$ a damping factor. The \emph{PPR vector} ${\pi}_{\mathbf{s}}$ is the stationary distribution of the transition kernel $G_{\mathbf{s}} = \alpha Q + (1-\alpha)\mathbf{1}\mathbf{s}^\tp$, which describes a walker who follows $Q$ with probability $\alpha$ and teleports to a node drawn from $\mathbf{s}$ with probability $1-\alpha$. From the stationary equation ${\pi}_{\mathbf{s}}^\tp = \alpha{\pi}_{\mathbf{s}}^\tp Q + (1-\alpha)\mathbf{s}^\tp$ (see Appendix~\ref{app:prelim} for the full derivation):
	\begin{equation}\label{eq:ppr-resolvent}
		{\pi}_{\mathbf{s}}^\tp = (1-\alpha)\,\mathbf{s}^\tp(I - \alpha Q)^{-1}.
	\end{equation}
	Since the resolvent $(I - \alpha Q)^{-1}$ is independent of $\mathbf{s}$, the map $\mathbf{s} \mapsto {\pi}_{\mathbf{s}}$ is \emph{linear}. For a \emph{seed set} $\mathcal{S} \subset \{1,\ldots,n\}$ with $\mathbf{s} = \frac{1}{|\mathcal{S}|}\sum_{v \in \mathcal{S}}\mathbf{e}_v$, the \emph{multi-seed PPR vector} is $	{\pi}_{\mathcal{S}} = \frac{1}{|\mathcal{S}|}\sum_{v \in \mathcal{S}} {\pi}_v$, 
	where ${\pi}_v$ is the \emph{single-seed} PPR vector with $\mathcal{S} = \{v\}$. \emph{Standard PR}~\cite{PageRank1998, langville2004deeper} is a case where $\mathcal{S} = \{1,\ldots,n\}$. Thus, the \emph{PR vector} ${\pi}^\tp$ is given by ${\pi}^\tp = \frac{1}{n}\sum_{v=1}^n {\pi}_v^\tp$.

	\paragraph{Algorithm: EDMD+PPR.}
	
	The key idea in our observable ranking algorithm is to interpret the EDMD matrix as a Markov chain on observables to discover zero block structure, indicating Koopman invariant subspaces. Define $P\coloneqq \mathcal{R}(|K_{\tilde{N},M}^\tp|)$ where $\mathcal{R}$ is the row-normalization, i.e., the $(i,j)$-entry of $\mathcal{R}([a_{ij}])$ is $a_{ij}/\sum_{k=1}^na_{ik}$. The transition probability from observable $i$ to $j$, $P[i,j]$, is proportional to $|K_{\tilde{N},M}[j,i]|$, the coefficient of $\psi_j$ in the expansion of $\Koop\psi_i$ --- i.e., the coupling strength from $\psi_i$ to $\psi_j$ under the projected dynamics. We then apply PPR with a prescribed \textit{seed set} $\mathcal{S}$ of observables to rank all $\tilde{N}$ observables \arXivtag[(Algorithm \ref{algor:PR-EDMD})]. 
	
	\begin{algorithm}
		\caption{Sub-dictionary selection via Personalized PageRank on the EDMD matrix}
		\begin{algorithmic}\label{algor:PR-EDMD}
			\STATE \textbf{Input:} Dictionary $\Dc_{\tilde{N}} = \{ \psi_1, \ldots, \psi_{\tilde{N}}\}$, target size $N$, seed set $\mathcal{S} \subset \{1,\ldots,\tilde{N}\}$ with $|\mathcal{S}|\leq N$, data $(\x^{(i)}, \y^{(i)} = \mathbf{F}(\x^{(i)}))_{i=1}^M$, and damping $\alpha$.
			\STATE \textbf{Step 1:} Compute the EDMD matrix $K_{\tilde{N},M} = \mathbf{\Psi}_{\tilde{N}}(\X)^{+}\,\mathbf{\Psi}_{\tilde{N}}(\Y)$.
			\STATE \textbf{Step 2:} Row-normalize $|K_{\tilde{N},M}^{\tp}|$ to obtain the row stochastic matrix $P$. 
			\STATE \textbf{Step 3:} Compute the multi-seed PPR vector ${\pi}_{\mathcal{S}, M} = \frac{1}{|\mathcal{S}|}\sum_{v \in \mathcal{S}}{\pi}_{v,M}$ of $P$ at damping $\alpha$; select the set of indices, $S_N$, with the top-$N$ PPR scores and the corresponding sub-dictionary $\Dc_N$. 
			\STATE \textbf{Step 4:} Recompute EDMD on the selected sub-dictionary:
			$K_{N,M} = \mathbf{\Psi}_N(\X)^{+}\,\mathbf{\Psi}_N(\Y)$.
			\STATE \textbf{Output:} $\Dc_N$ and $K_{N,M}$.
		\end{algorithmic}
	\end{algorithm}
	\arXivtag[If a row of $|K_{\tilde{N},M}^{\tp}|$ has zero sum, the row cannot be normalized; such rows (and corresponding columns) are] discarded in Step~2. Step~4 \emph{recomputes} the EDMD matrix with the selected $N$ observables, yielding the optimal predictor within $\Fc_N$, rather than extracting a submatrix of $K_{\tilde{N},M}$. 
	
	The seed set may contain observables whose  predictions matter most for specific application (e.g., coordinates for state prediction, slow mode observables), or could be taken to be the whole set of $\tilde{N}$ observables if no prior information is available. See Remark~\ref{rem:practical} for details. The PPR scores on the seed observables are usually high due to the teleportation, so they are likely to be included in the selected sub-dictionary $\Dc_N$ consisting of the top $N$ observables with the highest PPR scores. 
	
	The selected sub-dictionary $\Dc_N$ is where the Koopman coupling concentrates: the dynamics starting from the seed set $\mathcal{S}$ stays mostly within $\Fc_N$ with minimum leakage outside of it. 
	This can be interpreted in terms of multi-step prediction. 
	Let $r_{\max,M} = \|K_{\tilde{N},M}^{\tp}\|_\infty$ and assume $\alpha < r_{\max,M}$ so that $\gamma := \alpha/r_{\max,M} \in (0,1)$ (automatic when $r_{\max,M} \geq 1$). For any preference vector $\mathbf{s}$ supported on the selected indices $S_N$, we denote the \emph{$\gamma$-discounted multi-step leakage} by $\Lambda_{\mathbf{s}}^{\gamma}(S_N)$. This quantity measures the total $\gamma$-discounted weight that escapes $S_N$ under repeated application of $|K_{\tilde{N},M}^{\tp}|$. We show this satisfies
	$\Lambda_{\mathbf{s}}^{\gamma}(S_N) \leq \frac{1-\gamma}{1-\alpha}(1 - \pi_{\mathbf{s}, M}(S_N))$,
	where $\pi_{\mathbf{s},M}$ is the PPR vector found in Step~3 (see Proposition~\ref{prop:unconditional} in Appendix~\ref{sec:proofs-leakage} for the precise statement). Thus, selecting observables with high PPR scores implicitly reduces multi-step leakage from the seed set across all time horizons simultaneously. The bound passes through row normalization of $|K_{\tilde{N},M}^{\tp}|$, which discards row-magnitude information, so it may not be tight; the main results in Section~\ref{sec:main-results} develop sharper guarantees by exploiting the structure of the EDMD matrix directly.
	\vspace{-0.2cm}
	\section{Theoretical results}\label{sec:main-results}
	\vspace{-0.3cm}
	We first present error bounds for prediction accuracy (Section~\ref{subsec:error-bounds}), develop detection theory for exact and approximate invariant subspaces (Section~\ref{subsec:block-theory}), and give finite-sample extensions (Section~\ref{subsec:end-to-end}). A worked three-state example is in Appendix~\ref{app:3state}, finite-sample details in Appendix~\ref{app:finite-sample}, key lemmas in Section~\ref{sec:lemmas}, and proofs in Appendix~\ref{sec:proofs}.
	
		\vspace{-0.3cm}
	\subsection{EDMD prediction error}\label{subsec:error-bounds}
		\vspace{-0.2cm}
	We derive error bounds that reveal the central role of off-diagonal blocks in the EDMD matrix, motivating the entire framework. Let $\Dc = \{\psi_1, \psi_2, \ldots\}$ be a countable basis for the Hilbert space $\Fc = L^2(\Mc, \mu)$. 
	Writing $\mathbf{\Psi} = [\psi_1, \psi_2, \ldots]$ as an infinite row vector of observables and $K^{\text{true}}$ for the (infinite) matrix whose $j$-th column collects the expansion coefficients of $\Koop\psi_j$ in $\Dc$, the image of each basis function under the Koopman operator is
	\begin{equation}\label{eq:psi_evolve_true}
		\Koop\psi_i
		\;=\;
		\mathbf{\Psi}\, K^{\text{true}}\, \mathbf{e}_i
		\;=\;
		\begin{bmatrix}
			\mathbf{\Psi}_N & \mathbf{\Psi}_{N^\mathsf{c}}
		\end{bmatrix}
		\begin{bmatrix}
			K^{11, \text{true}} & K^{12, \text{true}} \\
			K^{21, \text{true}} & K^{22, \text{true}}
		\end{bmatrix}
		\mathbf{e}_i,
	\end{equation}
	where $\mathbf{\Psi}_N = [\psi_1,\ldots,\psi_N]$ and $\mathbf{\Psi}_{N^\mathsf{c}} = [\psi_{N+1},\psi_{N+2},\ldots]$, and the block partition is by the first $N$ rows/columns and their complements. For $i \leq N$, the selector $\mathbf{e}_i$ lies in the left block, so that $\Koop\psi_i = \mathbf{\Psi}_N K^{11,\text{true}}\mathbf{e}_i + \mathbf{\Psi}_{N^\mathsf{c}} K^{21,\text{true}}\mathbf{e}_i$; the second summand is the ``leakage'' of $\Koop\psi_i$ outside $\Fc_N$, carried by column $i$ of the bottom-left block $K^{21,\text{true}}$. 
	
	The EDMD approximation of $\Koop\psi_i$ with a finite dictionary $\Dc_N$ and finitely many data points $\{\x^{(i)}\}_{i=1}^M$ is given by $\Koop_{N,M}\psi_i \coloneqq \sum_{j=1}^N K_{N,M}[j,i]\psi_j$ where $K_{N,M}$ is the EDMD matrix in \eqref{eq:EDMD-mat-formula}. We want to reduce the $L^2$-error of this approximation where the dictionary $\Dc_N$ is a subset of a larger initial dictionary $\Dc_{\tilde{N}}$. Thus, we use the EDMD matrix with the initial dictionary $\Dc_{\tilde{N}}$ to provide an upper bound of the $L^2$-error. Specifically, we partition the EDMD matrix with $\Dc_{\tilde{N}}$ as 
	\begin{equation}\label{eq:K-block}
		K_{\tilde{N},M} = \begin{bmatrix} K_{\tilde{N},M}^{11} & K_{\tilde{N},M}^{12} \\ K_{\tilde{N},M}^{21} & K_{\tilde{N},M}^{22} \end{bmatrix},
	\end{equation}
	where the rows and columns are divided into two groups: from 1 to $N$ and from $N+1$ to $\tilde{N}$. Then we prove the following proposition.

	\begin{proposition}\label{prop:error-block}
		Let $\Dc_{\tilde N} =\{\psi_1, \dots , \psi_{\tilde N}\}$ be an initial dictionary and $\Dc_N =\{\psi_1, \dots , \psi_N\}$ be a sub-dictionary of $\Dc_{\tilde N}$. Suppose $\Dc_{\tilde{N}}$ is an orthonormal set and satisfies Assumption~\ref{assume:full-rank} with $M$ i.i.d.\ samples from $\mu$, $\{\x^{(i)}\}_{i=1}^M$.
		For each $\varepsilon > 0$, there exists $M_0 > 0$ such that for all $M \geq M_0$,
		\begin{equation}\label{eq:error-block-bound}
			\sum_{i=1}^N \norm{\Koop \psi_i - \Koop_{N,M}\,\psi_i}_{L^2}^2
			\leq
			2\left( \norm{ K_{\tilde{N},M}^{21} }_F^2 + \norm{ K^{\textnormal{\arXivtag[true]}}[(\tilde{N}+1):\infty,\, 1:\tilde{N}] }_F^2 \right) + \varepsilon.
		\end{equation}
	\end{proposition}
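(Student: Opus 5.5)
The plan is to decompose the error for each $i\le N$ into a projection (leakage) part and a finite-sample estimation part, and then let $M\to\infty$ to control the latter. Since $\Dc_{\tilde N}$ is orthonormal and contained in the countable orthonormal basis $\Dc$, we have $\Koop\psi_i=\sum_{j\ge1}K^{\text{true}}[j,i]\psi_j$ and $\Koop_{N,M}\psi_i=\sum_{j\le N}K_{N,M}[j,i]\psi_j$. Parseval gives
\begin{equation*}
\norm{\Koop\psi_i-\Koop_{N,M}\psi_i}_{L^2}^2=\sum_{j\le N}\bigl|K^{\text{true}}[j,i]-K_{N,M}[j,i]\bigr|^2+\sum_{N<j\le\tilde N}|K^{\text{true}}[j,i]|^2+\sum_{j>\tilde N}|K^{\text{true}}[j,i]|^2 .
\end{equation*}
Summing over $i\le N$, the last term is at most $\norm{K^{\text{true}}[(\tilde N+1):\infty,1:\tilde N]}_F^2$. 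This is already better than the stated factor $2$, so there is slack.

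The middle term is $\norm{K^{21,\text{true}}_{\tilde N}}_F^2$, where $K^{21,\text{true}}_{\tilde N}$ is the $(N+1{:}\tilde N)\times(1{:}N)$ block of $K^{\text{true}}$. I compare it with $K^{21}_{\tilde N,M}$ via $|a|^2\le 2|b|^2+2|a-b|^2$, which gives
\begin{equation*}
\norm{K^{21,\text{true}}_{\tilde N}}_F^2\le 2\norm{K^{21}_{\tilde N,M}}_F^2+2\norm{K^{\text{true}}[(N+1){:}\tilde N,1{:}N]-K^{21}_{\tilde N,M}}_F^2 .
\end{equation*}
This is where the factor $2$ comes from. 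It then remains to show that the first term, together with the discrepancy in the second, is smaller than $\varepsilon$ for large $M$.

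For this I would invoke the standard EDMD convergence (law of large numbers, e.g.\ Korda--Mezić or Appendix~\ref{sec:app-edmd}). With i.i.d.\ samples, $\frac1M\mathbf\Psi_{\tilde N}(\X)^*\mathbf\Psi_{\tilde N}(\X)\to I$ (by orthonormality) and $\frac1M\mathbf\Psi_{\tilde N}(\X)^*\mathbf\Psi_{\tilde N}(\Y)\to[\langle\Koop\psi_j,\psi_k\rangle]$ almost surely. Hence $K_{\tilde N,M}\to P_{\tilde N}\Koop|_{\Fc_{\tilde N}}$, whose matrix is exactly $K^{\text{true}}[1{:}\tilde N,1{:}\tilde N]$ in the orthonormal basis. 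Likewise $K_{N,M}\to K^{\text{true}}[1{:}N,1{:}N]$. Inverting the Gram matrix is continuous near $I$, and boundedness of $\Koop$ ensures the cross-moments are integrable.

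Choosing $M_0$ so that both finite-dimensional discrepancies are below $\varepsilon/3$ in squared Frobenius norm completes the bound. The main obstacle is this convergence step. The stated ``for all $M\ge M_0$'' must be read almost surely (or in probability), and convergence must hold for both the $N$- and the $\tilde N$-dictionary EDMD matrices simultaneously. I would handle this by applying the strong law entrywise to the finitely many moments involved and intersecting the resulting full-measure events.
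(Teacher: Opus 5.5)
Your proof is correct. It differs from the paper's in where the error is split and where the factor $2$ enters.

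The paper first proves Proposition~\ref{prop:error-bound}. It inserts the Galerkin projection $\Koop_N\psi_i$ and uses $\norm{a+b}^2\le 2\norm{a}^2+2\norm{b}^2$, so the factor $2$ multiplies the whole leakage $\norm{K^{21,\text{true}}}_F^2$. It then splits that infinite block into rows $N+1,\dots,\tilde N$ plus the tail. The finite part is replaced using only convergence of norms, $\norm{K^{21}_{\tilde N,M}}_F^2\to\norm{K^{\text{true}}[(N+1):\tilde N,\,1:N]}_F^2$, which costs nothing further.

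You instead use the exact Pythagorean split: the estimation error $\Koop_N\psi_i-\Koop_{N,M}\psi_i$ lies in $\Fc_N$ and the leakage is orthogonal to it. So no factor appears at that stage. You create the $2$ only when comparing population and empirical blocks via $\norm{A}_F^2\le 2\norm{B}_F^2+2\norm{A-B}_F^2$.

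The paper's route is modular, since Proposition~\ref{prop:error-bound} is a standalone bound in terms of $K^{21,\text{true}}$. Yours is sharper. If you compare the blocks by norm convergence as the paper does, your decomposition gives
\[
\sum_{i=1}^N \norm{\Koop\psi_i-\Koop_{N,M}\psi_i}_{L^2}^2\le \norm{K^{21}_{\tilde N,M}}_F^2+\norm{K^{\text{true}}[(\tilde N+1):\infty,\,1:N]}_F^2+\varepsilon .
\]
So your comparison step is not really \emph{where the factor $2$ comes from}: the $2$ is an artifact of the paper's triangle-inequality step, and your route does not need it anywhere.

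Your treatment of the limit is also more careful than the paper's, which only exhibits a single $M$. You read \emph{for all $M\ge M_0$} almost surely with a random $M_0$, and you intersect the full-measure events for the $N$- and $\tilde N$-dictionary matrices.

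One caveat applies to both you and the paper. Parseval on the infinite tail requires the entire basis $\Dc$, not just $\Dc_{\tilde N}$, to be orthonormal. This is implicit in the statement.
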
	
	Importantly, Proposition~\ref{prop:error-block} applies to \emph{any} sub-dictionary of size $N$ formed from $\Dc_{\tilde{N}}$: relabeling the observables by any permutation of $\{1,\ldots,\tilde{N}\}$ and taking the first $N$ elements under the permutation yields a valid instance of the proposition, with the off-diagonal block $K_{\tilde{N},M}^{21}$ recomputed accordingly.
	
	On the right-hand side, the first term $\|K_{\tilde{N},M}^{21}\|_F^2$ is \emph{directly computable from data}; the second reflects truncation beyond $\tilde{N}$ and depends on the initial dictionary $\Dc_{\tilde{N}}$, not the sub-dictionary $\Dc_{N}$.
	The practical implication is therefore: \emph{to find the best sub-dictionary of size $N$, one should search for the permutation $\{\psi_1,\ldots,\psi_{\tilde{N}}\}$ that minimizes the Frobenius norm of the resulting off-diagonal block, $\norm{ K_{\tilde{N},M}^{21} }_F^2$}. The ideal case is when this block is exactly zero for some permutation. Exact zero blocks correspond precisely to Koopman invariant subspaces within the span of the initial dictionary.
	
	We now show that EDMD ``discovers'' these invariant subspaces automatically. Note that the initial dictionary $\Dc_{\tilde{N}}$ in Theorem~\ref{thm:blockstr} is not necessarily orthonormal, \arXivtag[unlike Proposition~\ref{prop:error-block}].
	\begin{theorem}\label{thm:blockstr}
		Let $\Dc_{\tilde{N}} = \{\psi_1, \ldots, \psi_{\tilde N}\}$ be an initial dictionary (not necessarily spanning an invariant subspace) and $\{\mathbf{x}^{(i)}\}_{i=1}^M$ be a set of data points, which satisfy Assumption~\ref{assume:full-rank}. 
		If the sub-dictionary $\Dc_N = \{\psi_1, \ldots, \psi_N\} \subset \Dc_{\tilde{N}}$ spans a Koopman invariant subspace, then 
		\vspace{-0.2cm}
		\begin{description}[itemsep=0.5cm, itemsep=-0.2cm]
			\item[(i)] The top-left $N \by N$ submatrix of $K_{\tilde{N},M}$ is identical to  $K_{N,M}$.
			\item[(ii)] The bottom-left $(\tilde{N} - N) \by N$ submatrix of $K_{\tilde{N},M}$ (i.e., $K_{\tilde{N},M}^{21}$) is identically zero. 
		\end{description}
	\end{theorem}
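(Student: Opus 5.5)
The plan is to exploit uniqueness of the least-squares solution under Assumption~\ref{assume:full-rank}, working columnwise. By full column rank of $\mathbf{\Psi}_{\tilde N}(\X)$, the EDMD matrix $K_{\tilde N,M}=\mathbf{\Psi}_{\tilde N}(\X)^{+}\mathbf{\Psi}_{\tilde N}(\Y)$ is the \emph{unique} minimizer of $\|\mathbf{\Psi}_{\tilde N}(\Y)-\mathbf{\Psi}_{\tilde N}(\X)\tilde K\|_F^2$. Moreover, the Frobenius objective decouples over columns. Hence column $i$ of $K_{\tilde N,M}$ is the unique minimizer $\mathbf{k}\in\C^{\tilde N}$ of $\|\psi_i(\Y)-\mathbf{\Psi}_{\tilde N}(\X)\mathbf{k}\|_2^2$, where $\psi_i(\Y)=[\psi_i(\y^{(1)}),\ldots,\psi_i(\y^{(M)})]^\tp$.

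Next I use invariance. Fix $i\le N$. Since $\Fc_N$ is Koopman invariant, there exist coefficients $\mathbf{c}_i\in\C^N$ with $\Koop\psi_i=\psi_i\circ\mathbf{F}=\mathbf{\Psi}_N\mathbf{c}_i$. Evaluating this at the data points gives $\psi_i(\Y)=\mathbf{\Psi}_N(\X)\mathbf{c}_i$, using $\y^{(m)}=\mathbf{F}(\x^{(m)})$.

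There is one subtlety, and I expect it to be the only real obstacle. The identity $\Koop\psi_i=\mathbf{\Psi}_N\mathbf{c}_i$ holds in $\Fc=L^2(\Mc,\mu)$, i.e.\ only $\mu$-a.e., while we evaluate pointwise at specific samples. I would handle this by reading ``spans a Koopman invariant subspace'' as a pointwise identity $\psi_i\circ\mathbf{F}=\sum_j c_{ji}\psi_j$ on $\Mc$, or at least on the data points. Alternatively, one can note that it holds almost surely for i.i.d.\ samples from $\mu$, since a countable union of null sets is null. I would state this convention explicitly, in line with the paper's remark that the Frobenius error vanishes when $\Fc_N$ is invariant.

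With that identity in hand, the padded vector $\tilde{\mathbf{c}}_i=[\mathbf{c}_i^\tp,\mathbf{0}^\tp]^\tp\in\C^{\tilde N}$ satisfies $\mathbf{\Psi}_{\tilde N}(\X)\tilde{\mathbf{c}}_i=\mathbf{\Psi}_N(\X)\mathbf{c}_i=\psi_i(\Y)$. So it attains residual zero and is a minimizer for column $i$. By uniqueness, column $i$ of $K_{\tilde N,M}$ equals $\tilde{\mathbf{c}}_i$. Its last $\tilde N-N$ entries vanish for every $i\le N$, which proves (ii).

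For (i), note that $\mathbf{\Psi}_N(\X)$ also has full column rank, since it consists of columns of a full-column-rank matrix. So column $i$ of $K_{N,M}$ is the unique minimizer of $\|\psi_i(\Y)-\mathbf{\Psi}_N(\X)\mathbf{k}\|_2$ over $\mathbf{k}\in\C^N$. The vector $\mathbf{c}_i$ attains residual zero, so column $i$ of $K_{N,M}$ equals $\mathbf{c}_i$. This coincides with the first $N$ entries of column $i$ of $K_{\tilde N,M}$, giving (i).

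Everything beyond the pointwise-evaluation issue is routine uniqueness of least squares.
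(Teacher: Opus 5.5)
Your proof is correct, and it takes a different route from the paper. The paper writes $K_{\tilde N,M}$ through the normal equations and inverts the $2\times 2$ block Gram matrix with the Schur-complement formula, using $W=A-BD^{-1}C$, with $D$ and $W$ invertible by Assumption~\ref{assume:full-rank}. It then substitutes the invariance relation $\mathbf{\Psi}_N(\Y)=\mathbf{\Psi}_N(\X)K_{N,M}$ from \eqref{eq:KpsiX-psiY}. Algebraic cancellation reduces the top-left block to $W^{-1}WK_{N,M}=K_{N,M}$ and the bottom-left block to $D^{-1}CW^{-1}(-A+W+BD^{-1}C)K_{N,M}=O$.

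You avoid block inversion entirely. The zero-padded coefficient vector has zero residual, so uniqueness of the least-squares minimizer for $\mathbf{\Psi}_{\tilde N}(\X)$ and for its column-submatrix $\mathbf{\Psi}_N(\X)$ gives (i) and (ii) at once. This is shorter and shows exactly where full column rank is used. It also makes explicit two points the paper takes for granted when it invokes \eqref{eq:KpsiX-psiY}. First, the $\mu$-a.e.\ invariance identity must hold at the sampled points; your pointwise reading, or the almost-sure argument for i.i.d.\ samples, handles this correctly. Second, the exact coefficient matrix on $\Fc_N$ is the one EDMD returns.

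The paper's computation gives explicit block formulas, but your viewpoint gives a quantitative version just as easily. Use the left-inverse identity $\mathbf{\Psi}_{\tilde N}(\X)^{+}\mathbf{\Psi}_N(\X)=\begin{bmatrix} I_N \\ O\end{bmatrix}$ and write $\mathbf{\Psi}_N(\Y)=\mathbf{\Psi}_N(\X)K_{N,M}+E$. Then the first $N$ columns of $K_{\tilde N,M}$ equal $\begin{bmatrix} K_{N,M} \\ O\end{bmatrix}+\mathbf{\Psi}_{\tilde N}(\X)^{+}E$. Setting $E=O$ recovers the theorem. When $\Fc_N$ is only approximately invariant, this controls both the deviation of the top-left block and the size of $K^{21}_{\tilde N,M}$ through the sub-dictionary residual $E$.
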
	
	This theorem suggests that if there is any sub-dictionary that spans a Koopman invariant subspace such a zero block structure emerges up to permutation. Similar results when $M=\infty$ or $\Dc_N$ is chosen as the initial dictionary were noted in literature~\cite{williams2015data, Klus2016JCompDyn, korda2018convergence, mezic2021koopman}. We emphasize that the result holds for $M<\infty$ as well even with a larger initial dictionary, making ours more computationally feasible. See Appendix~\ref{app:toy-model} for illustration with a toy model. 
	
	\arXivtag[These results recast the problem of finding an invariant subspace as identifying a zero block structure in the EDMD matrix. However, exhaustive search over all $\binom{\tilde{N}}{N}$ sub-dictionaries to obtain the clear zero block submatrix at the bottom-left corner is combinatorially prohibitive. 
    Our framework (Algorithm~\ref{algor:PR-EDMD}) circumvents this obstacle by considering the Markov chain induced by the EDMD matrix, while also handling the practically relevant case of approximate invariance.]
    
	\vspace{-0.1cm}
	\subsection{Finding Koopman invariant subspaces via personalized PageRank}\label{subsec:block-theory}
	
	\vspace{-0.2cm}
	We now develop a finer theory for the case when the EDMD matrix admits (approximate) zero block structure. Let $P = \mathcal{R}(|K_{\tilde{N},M}^{\tp}|)$ be the row normalization of $|K_{\tilde{N},M}^{\tp}|$. Let $K_{\tilde{N},M}^{(0)}$ denote $K_{\tilde{N},M}$ with the bottom-left block zeroed, and define the row-stochastic matrix $P^{(0)} := \mathcal{R}(|K_{\tilde{N},M}^{(0)}|^{\tp})$. 
    Equivalently, $P^{(0)}$ is obtained from $P$ by zeroing its top-right block $P^{12}$ and \emph{renormalizing \arXivtag[the rows of the top block]}, so that $P^{11,(0)} = \mathcal{R}(P^{11})$, $P^{21,(0)} = P^{21}$, $P^{22,(0)} = P^{22}$, with the top-right block identically zero, \arXivtag[i.e., $P^{12,(0)} = O$.] 
    All matrices are partitioned conformally with \eqref{eq:K-block}. The coupling parameter is $\eta := 1 - \|P^{22}\|_\infty > 0$. 

	Algorithm~\ref{algor:PR-EDMD} ranks observables by the PR or PPR vector $\pi_{\mathbf{s}}$ of $P$ (with $\mathbf{s} = \mathbf{1}/\tilde{N}$ for PR, $\mathbf{s} = |\mathcal{S}|^{-1}\sum_{v\in\mathcal{S}}\mathbf{e}_v$ for PPR), so it correctly identifies $\Dc_N$ iff the corresponding gap on $P$ is positive:
	\begin{equation}\label{eq:delta-defs}
		\Delta_{\textnormal{PR}} := \min_{i\leq N}\pi(i) - \max_{j > N}\pi(j) > 0, \qquad
		\Delta_{\textnormal{PPR}} := \min_{i\leq N}\pi_{\mathcal{S}}(i) - \max_{j > N}\pi_{\mathcal{S}}(j) > 0.
	\end{equation}
	Theorem \ref{thm:detection} provides sufficient conditions for detection.

	\begin{theorem}[Koopman invariant subspace detection]\label{thm:detection}
		Under Assumption~\ref{assume:full-rank} and $\alpha \in (0,1)$:
		
		\smallskip
		\noindent\textbf{(i) Standard PR} ($\mathcal{S} = \{1,\ldots,\tilde{N}\}$). 
		Let $(\mathbf{q}^{\alpha})^\tp := \tfrac{1-\alpha}{N}\mathbf{1}_N^\tp(I - \alpha P^{11,(0)})^{-1}$ denote the standard PR vector of $P^{11,(0)}$ and let  $q_{\min}^\alpha := \min_{i \leq N}q_i^\alpha$. Then detection is successful (i.e., $\Delta_{\textup{PR}}>0$) provided
		\begin{equation}\label{eq:pr-gap-condition-explicit}
			\|P^{12}\|_\infty \;<\; \frac{(1-\alpha)N}{4\alpha\tilde{N}}\!\left[q_{\min}^\alpha - \frac{(\tilde{N}-N)(1-\alpha)}{N(1-\alpha+\alpha\eta)}\right].
		\end{equation}
		
		\smallskip
		\noindent\textbf{(ii) Multi-seed PPR.} Let $\mathcal{S} \subset \{1,\ldots,N\}$ satisfy the reachability condition: every $i \leq N$ is reachable from some $v \in \mathcal{S}$ in $P^{11,(0)}$. Then detection is successful (i.e., $\Delta_{\textnormal{PPR}} > 0$) provided
	\begin{equation}\label{eq:ppr-gap-condition-explicit}
		\|P^{12}\|_\infty \;<\; \frac{(1-\alpha)^2}{4\alpha|\mathcal{S}|}\min_{i\leq N}\sum_{v\in\mathcal{S}}\bigl[(I - \alpha P^{11,(0)})^{-1}\bigr]_{vi}.
	\end{equation}
	In particular, when $\Fc_N$ is $\Koop$-invariant, $P^{12} = O$ by Theorem~\ref{thm:blockstr}, so \eqref{eq:ppr-gap-condition-explicit} holds: $\pi_{\mathcal{S}} = \pi^{(0)}_{\mathcal{S}}$, $\pi_{\mathcal{S}}(j) = 0$ for all $j > N$, and the recomputed $K_{N,M}$ \arXivtag[coincides with the population-level Koopman matrix $K_N\coloneqq\lim_{M\to\infty}K_{N,M}$.]
\end{theorem}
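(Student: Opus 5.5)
The plan is to compare the PPR vector of $P$ with that of the block-triangular chain $P^{(0)}$ and control the difference by $\|P^{12}\|_\infty$.

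\textbf{Unperturbed chain.} For the block-lower-triangular $P^{(0)}$ and any preference $\mathbf{s}$, the block structure of \eqref{eq:ppr-resolvent} gives
\[
\bigl[(I-\alpha P^{(0)})^{-1}\bigr]_{11}=(I-\alpha P^{11,(0)})^{-1},\qquad
\bigl[(I-\alpha P^{(0)})^{-1}\bigr]_{12}=O .
\]
\begin{itemize}
\item For PPR with $\mathcal{S}\subset\{1,\dots,N\}$ this yields $\pi^{(0)}_{\mathcal{S}}(j)=0$ for $j>N$.
\item For $i\le N$, $\pi^{(0)}_{\mathcal{S}}(i)=\frac{1-\alpha}{|\mathcal{S}|}\sum_{v\in\mathcal{S}}[(I-\alpha P^{11,(0)})^{-1}]_{vi}$.
\item Reachability makes this strictly positive, since the Neumann series $\sum_k\alpha^k (P^{11,(0)})^k$ has a positive $(v,i)$ entry whenever $i$ is reachable from $v$.
\end{itemize}

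For PR, $\mathbf{s}=\mathbf{1}/\tilde N$ also puts mass on the bottom block.
\begin{itemize}
\item The top part is $\pi^{(0)}_{1:N}=\frac{N}{\tilde N}\mathbf{q}^\alpha+(\text{nonnegative contribution from } P^{21}\text{ paths})\ge \frac{N}{\tilde N}\mathbf{q}^\alpha$.
\item The bottom part is $\pi^{(0)}_{N+1:\tilde N}{}^\tp=\frac{1-\alpha}{\tilde N}\mathbf{1}^\tp(I-\alpha P^{22})^{-1}$.
\item Using $\|P^{22}\|_\infty=1-\eta$, we have $\|(I-\alpha P^{22})^{-1}\|_\infty\le 1/(1-\alpha+\alpha\eta)$. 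This bounds the bottom mass, hence every bottom entry, by $\frac{(\tilde N-N)(1-\alpha)}{\tilde N(1-\alpha+\alpha\eta)}$.
\end{itemize}
Hence the unperturbed gap is at least $\frac{N}{\tilde N}\bigl[q^\alpha_{\min}-\frac{(\tilde N-N)(1-\alpha)}{N(1-\alpha+\alpha\eta)}\bigr]$, which is exactly the bracket in \eqref{eq:pr-gap-condition-explicit}.

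\textbf{Perturbation.} Let $E=P-P^{(0)}$. Its only nonzero rows are the top $N$, where $E^{12}=P^{12}$ and $E^{11}=P^{11}-\mathcal{R}(P^{11})$.
\begin{itemize}
\item Because $\mathcal{R}$ rescales row $i$ by $1/(1-r_i)$ with $r_i=\|P^{12}_{i,:}\|_1$, we get $\|E^{11}_{i,:}\|_1=r_i$.
\item Therefore $\|E\|_\infty\le 2\|P^{12}\|_\infty$.
\end{itemize}
The standard PageRank perturbation identity, from subtracting the two stationary equations, is
\[
\pi^\tp-\pi^{(0)\tp}=\alpha\,\pi^\tp E\,(I-\alpha P^{(0)})^{-1}.
\]
Since $\|(I-\alpha P^{(0)})^{-1}\|_\infty\le 1/(1-\alpha)$ and $\|\pi\|_1=1$, this gives
\[
\|\pi-\pi^{(0)}\|_\infty\le\|\pi-\pi^{(0)}\|_1\le \frac{2\alpha\|P^{12}\|_\infty}{1-\alpha}.
\]
The gap $\Delta$ then loses at most twice this amount, namely $\frac{4\alpha\|P^{12}\|_\infty}{1-\alpha}$. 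Requiring this loss to be smaller than the unperturbed gap gives both conditions:
\begin{itemize}
\item For PPR, the unperturbed gap is $\frac{1-\alpha}{|\mathcal{S}|}\min_i\sum_v[\cdot]_{vi}$, and the requirement rearranges to \eqref{eq:ppr-gap-condition-explicit}, producing the factor $(1-\alpha)^2/(4\alpha|\mathcal{S}|)$.
\item For PR, the same requirement rearranges to \eqref{eq:pr-gap-condition-explicit}, producing the factor $(1-\alpha)N/(4\alpha\tilde N)$.
\end{itemize}

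\textbf{Invariant case.} By Theorem~\ref{thm:blockstr}(ii), $K^{21}_{\tilde N,M}=O$, so $P^{12}=O$ and $P=P^{(0)}$. Hence $\pi_{\mathcal{S}}=\pi^{(0)}_{\mathcal{S}}$, which vanishes off $\{1,\dots,N\}$. The selected set is therefore $\Dc_N$. The recomputed matrix satisfies $\mathbf{\Psi}_N(\X)K_{N,M}=\mathbf{\Psi}_N(\Y)$ exactly. Its solution is unique by Assumption~\ref{assume:full-rank} and equals the coordinate matrix of $\Koop|_{\Fc_N}$ in the basis $\Dc_N$ for every $M$, so it coincides with its limit $K_N$.

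\textbf{Main obstacle.} The delicate step is the $\ell_\infty$ control of the perturbation. The rows of $P$ near the boundary are renormalized, which perturbs $P^{11}$ as well as $P^{12}$; this is handled by the bound $\|E\|_\infty\le 2\|P^{12}\|_\infty$ above. For PR there is a further check: the unperturbed top entries must genuinely dominate $\frac{N}{\tilde N}q^\alpha$. I expect this to follow by monotonicity of the Neumann series, since the $P^{21}$ paths only add mass to the top block.
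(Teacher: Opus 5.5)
Your proposal is correct and follows the paper's proof essentially step for step. The shared steps are:
\begin{itemize}
\item the block-lower-triangular resolvent of $P^{(0)}$, which yields the closed-form PPR gap and the relaxed PR gap (your ``nonnegative $P^{21}$-path contribution'' is the paper's $s^{\mathrm{eff}}_v\ge 1$, and the bottom mass is bounded via $\|R_{22}\|_\infty\le 1/(1-\alpha+\alpha\eta)$);
\item the row-wise computation $\|P-P^{(0)}\|_\infty = 2\|P^{12}\|_\infty$;
\item the resolvent identity giving $\|\pi-\pi^{(0)}\|_1\le \frac{2\alpha}{1-\alpha}\|P^{12}\|_\infty$;
\item the half-gap criterion that produces the factor $\frac{1-\alpha}{4\alpha}$.
\end{itemize}
Your uniqueness argument, showing that the recomputed $K_{N,M}$ equals the coordinate matrix of $\Koop|_{\Fc_N}$ and hence $K_N$, makes explicit a step the paper's proof does not carry out and only mentions in the remark after Theorem~\ref{thm:blockstr}.
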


The proof of the above result is in Appendix \ref{sec:detection_proofs}. A brief sketch of the proof is provided in Sec. \ref{sec:lemmas}. To demonstrate the implications of Theorem \ref{thm:detection}, in 	Figure~\ref{fig:detection-windows}  we plot the detection window $\alpha^*(\epsilon) := \sup\{\alpha : \text{Theorem~\ref{thm:detection}(i) or (ii) holds}\}$ as a function of leakage $\epsilon := \|P^{12}\|_\infty$ for a 
\begin{wrapfigure}[11]{r}{0.5\linewidth}
	\vspace{-12pt}
	\centering
	\includegraphics[width=1\linewidth]{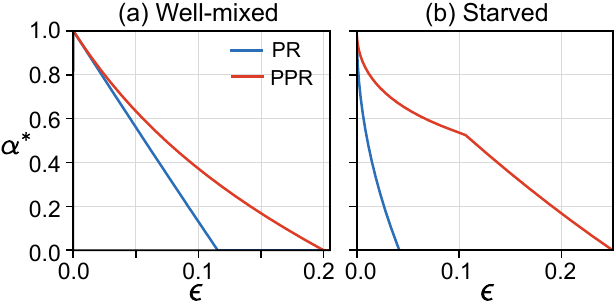}
	\vspace{-0.6cm}
	\caption{Detection windows $\alpha^\star(\epsilon)$. 
		Detection holds below each curve.\arXivtag }
	\label{fig:detection-windows}
	\vspace{-15pt}
\end{wrapfigure}
simple 3-dimensional example. 
A wider window means detection succeeds  for a  broader range of the damping factor $\alpha$. PPR (red) dominates PR (blue) in both panels (see Appendix~\ref{app:3state} for details). In Panel~(a) (well-mixed internal block), PPR tolerates leakage up to $\epsilon = 1/4 = 0.25$, whereas PR fails beyond $\epsilon = 1/8 = 0.125$. The gap is more dramatic in Panel~(b), where the internal block has a \emph{starved node} (a node receiving no inflow from within the block): PR's window collapses beyond $\epsilon = 1/24 \approx 0.042$, while PPR remains effective up to $\epsilon = 1/4 = 0.25$. The underlying reason is structural: PR requires the internal block 	$P^{11,(0)}$ to be well-mixed, a condition that fails whenever a starved node is present, whereas PPR  requires only that the seed set $\mathcal{S}$ can reach every node in the block, which is a strictly weaker condition that holds independently of $\alpha$.

	\vspace{-0.1cm}
\subsection{Finite-sample guarantees}\label{subsec:end-to-end}
	\vspace{-0.2cm}
\arXivtag[To establish finite-sample error bounds and detection guarantees, we] additionally assume the dictionary and its Koopman images are uniformly bounded:
\begin{assumption}\label{assume:bounded-dict}
	There exists a positive constant $D > 0$ such that $\sup_{\x \in \Mc}|\psi_i(\x)| \leq D$ and $\sup_{\x \in \Mc}|\Koop\psi_i(\x)| \leq D$ for all $i \in \mathbb{N}$.
\end{assumption}
Under Assumption~\ref{assume:bounded-dict}, matrix Bernstein inequalities yield $\|K_{\tilde{N},M}^\tp - K_{\tilde{N}}^\tp\|_\infty \leq \varepsilon_M = O(1/\sqrt{M})$ for $M$ sufficiently large (Proposition~\ref{prop:finite-sample-edmd} in Appendix~\ref{sec:proofs-finite}). Let $\lambda_{\min}$ denote the smallest eigenvalue (in modulus) of the Gram matrix $G_{\tilde{N}} \coloneqq [\langle \psi_i, \psi_j\rangle_{L^2(\mu)}]_{i,j=1}^{\tilde{N}}$. Throughout this subsection, let $P_{\textnormal{pop}} = \mathcal{R}(|K_{\tilde{N}}^\tp|)$ be the row-normalized \arXivtag[population-level] EDMD matrix with top-right block $P^{12}_{\textnormal{pop}}$, and let $P^{(0)}_{\textnormal{pop}} := \mathcal{R}(|K^{(0)}_{\tilde{N}}|^\tp)$ be the row-stochastic matrix obtained from $P_{\textnormal{pop}}$ by zeroing $P^{12}_{\textnormal{pop}}$ and renormalizing \arXivtag[the rows of the top block]. Let $K_{\tilde{N}}^{(0)}$ denote $K_{\tilde{N}}$ with its bottom-left block zeroed, and set $r^{(0)}_{\min} := \min_i \sum_j |(K_{\tilde{N}}^{(0)})^\tp[i,j]|$ and $r_{\max} := \|K_{\tilde{N}}^\tp\|_\infty$. Define $\varepsilon_0 := \|(K_{\tilde{N}}^{(0)} - K_{\tilde{N}})^\tp\|_\infty = \max_{i \leq N}\sum_{j > N}|K_{\tilde{N}}^\tp[i,j]|$ as the population operator-level leakage, which satisfies $\|P^{12}_{\textnormal{pop}}\|_\infty\leq \varepsilon_0/r^{(0)}_{\min}$. Let $\pi_{\textnormal{pop}}^{(0)}$ denote the PR (or PPR) vector of the row-stochastic matrix $P^{(0)}_{\textnormal{pop}}$. Combining with the perturbation theory:

\begin{theorem}[End-to-end detection]\label{thm:end-to-end}
	Let $\Dc_{\tilde{N}}$ satisfy Assumptions~\ref{assume:full-rank} and~\ref{assume:bounded-dict}, and let $\rho \in (0,1/2)$. Let $\varepsilon_M = C_{\textnormal{EDMD}}\sqrt{(2/M)\log(2\tilde{N}/\rho)}$ (see Appendix~\ref{sec:proofs-finite} for the definition of $C_{\textnormal{EDMD}}$), and let ${\pi}_M$ be the PR (or PPR) vector of $P = \mathcal{R}(|K_{\tilde{N},M}^\tp|)$. Then for $M \geq \frac{32\,\tilde{N}^2 D^4}{\lambda_{\min}^2}\log\frac{2\tilde{N}}{\rho}$, with probability $\geq 1-2\rho$, we have $	\norm{{\pi}_M - \pi_{\textnormal{pop}}^{(0)}}_1 \;\leq\; \frac{2\alpha(\varepsilon_0 + \varepsilon_M)}{(1-\alpha)\,r^{(0)}_{\min}}$. 
\end{theorem}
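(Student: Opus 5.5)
Throughout, $\|\cdot\|_\infty$ denotes the maximum absolute row sum. The plan is to combine three ingredients: (a) a high-probability EDMD concentration bound; (b) a perturbation bound for the row normalization $\mathcal{R}$ in the $\infty$-norm; and (c) the standard PageRank perturbation bound in the $\ell_1$ norm. For (c), let $\pi_{\mathbf{s}}$ and $\tilde\pi_{\mathbf{s}}$ be the PR/PPR vectors of two row-stochastic matrices $Q$ and $\tilde Q$ with the same $\mathbf{s}$ and $\alpha$. Subtracting the two stationary equations $\pi^\tp=\alpha\pi^\tp Q+(1-\alpha)\mathbf{s}^\tp$ gives
\[
(\pi-\tilde\pi)^\tp(I-\alpha Q)=\alpha\tilde\pi^\tp(Q-\tilde Q).
\]
Since $\|(I-\alpha Q)^{-1}\|_\infty\le 1/(1-\alpha)$ and $\|\tilde\pi^\tp\|_1=1$, this yields
\[
\|\pi-\tilde\pi\|_1\le \frac{\alpha}{1-\alpha}\|Q-\tilde Q\|_\infty.
\]
With this bound, the theorem reduces to showing $\|P-P^{(0)}_{\textnormal{pop}}\|_\infty\le 2(\varepsilon_0+\varepsilon_M)/r^{(0)}_{\min}$.

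\emph{Step 1 (concentration).} I will invoke Proposition~\ref{prop:finite-sample-edmd} in Appendix~\ref{sec:proofs-finite}. It uses Assumption~\ref{assume:bounded-dict}, matrix Bernstein for $\frac1M\mathbf{\Psi}(\X)^*\mathbf{\Psi}(\X)$ and $\frac1M\mathbf{\Psi}(\X)^*\mathbf{\Psi}(\Y)$, and the sample-size condition $M\ge 32\tilde N^2D^4\lambda_{\min}^{-2}\log(2\tilde N/\rho)$. That condition ensures the empirical Gram matrix has smallest eigenvalue at least $\lambda_{\min}/2$, so the inverse is controlled. The proposition then gives $\|K_{\tilde N,M}^\tp-K_{\tilde N}^\tp\|_\infty\le\varepsilon_M$ with probability at least $1-2\rho$, after a union bound over the two Bernstein events. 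Combining with the triangle inequality and the definition of $\varepsilon_0$,
\[
\|K_{\tilde N,M}^\tp-(K^{(0)}_{\tilde N})^\tp\|_\infty\le\varepsilon_0+\varepsilon_M.
\]
Entrywise absolute values can only decrease this: $\bigl|\,|a|-|b|\,\bigr|\le|a-b|$, so the same bound holds for $|K_{\tilde N,M}^\tp|-|K^{(0)}_{\tilde N}|^\tp$.

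\emph{Step 2 (row normalization).} This is the main technical step. I will use the following lemma: for nonnegative matrices $A$ and $B$ whose rows of $B$ have sums at least $r>0$, one has $\|\mathcal{R}(A)-\mathcal{R}(B)\|_\infty\le 2\|A-B\|_\infty/r$. The proof is row by row. Write $a,b$ for corresponding rows with sums $|a|_1,|b|_1$. Then
\[
\left\|\frac{a}{|a|_1}-\frac{b}{|b|_1}\right\|_1\le \frac{\|a-b\|_1}{|b|_1}+\|a\|_1\left|\frac1{|a|_1}-\frac1{|b|_1}\right| =\frac{\|a-b\|_1+\bigl||b|_1-|a|_1\bigr|}{|b|_1}\le\frac{2\|a-b\|_1}{|b|_1}.
\]
Apply the lemma with $A=|K_{\tilde N,M}^\tp|$ and $B=|K^{(0)}_{\tilde N}|^\tp$, whose row sums are at least $r^{(0)}_{\min}$. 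It gives $\|P-P^{(0)}_{\textnormal{pop}}\|_\infty\le 2(\varepsilon_0+\varepsilon_M)/r^{(0)}_{\min}$.

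One subtlety is that rows of $A$ with zero sum would be discarded in Algorithm~\ref{algor:PR-EDMD}. The row estimate above does not require $|a|_1$ to be bounded below, only $|a|_1>0$, so it still applies to every row that is kept. I also expect that $r^{(0)}_{\min}>0$ is implicitly assumed, since otherwise $P^{(0)}_{\textnormal{pop}}$ is not defined.

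\emph{Step 3 (PageRank perturbation).} Insert the bound from Step 2 into the resolvent estimate with $Q=P^{(0)}_{\textnormal{pop}}$, $\tilde Q=P$, and the same preference vector $\mathbf{s}$ (uniform for PR, seed-based for PPR). This gives
\[
\|\pi_M-\pi^{(0)}_{\textnormal{pop}}\|_1\le\frac{\alpha}{1-\alpha}\cdot\frac{2(\varepsilon_0+\varepsilon_M)}{r^{(0)}_{\min}},
\]
which is the claimed bound.

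The main obstacle is matching constants in Step 1: the exact definition of $C_{\textnormal{EDMD}}$ and the Gram-inverse perturbation (the sample-size threshold). I would take Proposition~\ref{prop:finite-sample-edmd} as given, so the remaining work is the normalization lemma. Taking the denominator from the population matrix $B$ rather than the empirical one is what yields the clean factor $2/r^{(0)}_{\min}$.
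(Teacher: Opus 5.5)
Your proof is correct, and it reaches the stated constant by a different decomposition than the paper's.

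\textbf{The paper's route.} It splits at the level of PageRank vectors, $\|\pi_M-\pi^{(0)}_{\textnormal{pop}}\|_1\le\|\pi_M-\pi_{\textnormal{pop}}\|_1+\|\pi_{\textnormal{pop}}-\pi^{(0)}_{\textnormal{pop}}\|_1$, and handles the two terms separately:
\begin{itemize}
\item The population term uses the block identity $\|P_{\textnormal{pop}}-P^{(0)}_{\textnormal{pop}}\|_\infty=2\|P^{12}_{\textnormal{pop}}\|_\infty$ (Lemma~\ref{lem:pr-ppr-perturbation}) together with $\|P^{12}_{\textnormal{pop}}\|_\infty\le\varepsilon_0/r^{(0)}_{\min}$.
\item The sampling term uses Lemma~\ref{lem:row-norm-perturbation} with reference matrix $K_{\tilde N}^\tp$. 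This needs an extra check that the absolute row sums of $K_{\tilde N}^\tp$ are at least $r^{(0)}_{\min}$.
\end{itemize}
So the paper applies the resolvent bound twice.

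\textbf{Your route.} You apply the triangle inequality once, at the level of the EDMD matrices: $\|K^\tp_{\tilde N,M}-(K^{(0)}_{\tilde N})^\tp\|_\infty\le\varepsilon_0+\varepsilon_M$. You then use row-normalization stability a single time, with reference $|K^{(0)}_{\tilde N}|^\tp$, whose minimum row sum is $r^{(0)}_{\min}$ by definition. Finally you use the resolvent bound a single time. This never needs $P_{\textnormal{pop}}$, the block identity, or the row-sum verification.

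\textbf{What the paper's split buys.} The two error sources stay separate. Its intermediate bounds are reused later:
\begin{itemize}
\item $\frac{2\alpha}{1-\alpha}\|P^{12}_{\textnormal{pop}}\|_\infty$ can be smaller than $\frac{2\alpha\varepsilon_0}{(1-\alpha)r^{(0)}_{\min}}$, because $P^{12}_{\textnormal{pop}}$ is normalized by the full row sums rather than by $r^{(0)}_{\min}$. It is reused in Step~2 of the proof of Corollary~\ref{cor:sample-complexity}.
\item The sampling-only bound $\|\pi_M-\pi_{\textnormal{pop}}\|_1\le\frac{2\alpha\varepsilon_M}{(1-\alpha)r^{(0)}_{\min}}$ is reused in Corollary~\ref{cor:finite-sample-leakage}.
\end{itemize}
Your merged estimate on $\|P-P^{(0)}_{\textnormal{pop}}\|_\infty$ is more economical for this theorem alone, but it does not isolate these pieces.
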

This result, combined with Theorem~\ref{thm:detection}, provides the sample-complexity bounds for a correct detection.

\begin{corollary}[Sample complexity for PPR detection with $\alpha=1/2$]\label{cor:sample-complexity}
	Let $\Delta^{(0)}_{\textnormal{PPR}}$ be the PPR auxiliary gap \eqref{eq:delta0-defs} of Section~\ref{sec:lemmas} computed from $P^{(0)}_{\textnormal{pop}}$.
	If $\Delta^{(0)}_{\textnormal{PPR}} > 0$ and $\|P_{\textnormal{pop}}^{12}\|_\infty < r^{(0)}_{\min}\Delta^{(0)}_{\textnormal{PPR}}/(4 r_{\max})$, detection by Algorithm~\ref{algor:PR-EDMD} (i.e.\ $\Delta_{\textnormal{PPR}} > 0$) holds with probability $\geq 1-2\rho$ for
	\begin{equation}\label{eq:ppr-sample-complexity}
		M \;\geq\; \max\!\left\{ \frac{32\,\tilde{N}^2 D^4}{\lambda_{\min}^2}\log\frac{2\tilde{N}}{\rho},\;\; \frac{32\log(2\tilde{N}/\rho)\, C_{\text{EDMD}}^2}{(r^{(0)}_{\min})^2\bigl(\Delta^{(0)}_{\textnormal{PPR}} - 4 r_{\max}\|P_{\textnormal{pop}}^{12}\|_\infty/r^{(0)}_{\min}\bigr)^2}\right\}.
	\end{equation}
\end{corollary}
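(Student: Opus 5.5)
The corollary follows from Theorem~\ref{thm:end-to-end} combined with a gap-preservation argument. The argument uses the auxiliary gap $\Delta^{(0)}_{\textnormal{PPR}}$ of the unperturbed chain $P^{(0)}_{\textnormal{pop}}$.

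\emph{Gap stability under $\ell_1$ perturbation.} For any two probability vectors $\pi,\pi'$ we have $\max_i|\pi(i)-\pi'(i)|\le\|\pi-\pi'\|_1$. Hence
\[
\Delta_{\textnormal{PPR}} \;\ge\; \Delta^{(0)}_{\textnormal{PPR}} - 2\|\pi_M - \pi^{(0)}_{\textnormal{pop}}\|_1 .
\]
Indeed, $\min_{i\le N}$ can drop by at most $\|\cdot\|_\infty$, and $\max_{j>N}$ can rise by at most $\|\cdot\|_\infty$. So it suffices to make $\|\pi_M-\pi^{(0)}_{\textnormal{pop}}\|_1<\Delta^{(0)}_{\textnormal{PPR}}/2$.

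\emph{Applying Theorem~\ref{thm:end-to-end} at $\alpha=1/2$.} Here $2\alpha/(1-\alpha)=2$. Provided $M$ exceeds the first term of the maximum in \eqref{eq:ppr-sample-complexity}, with probability $\ge 1-2\rho$ we get
\[
\|\pi_M-\pi^{(0)}_{\textnormal{pop}}\|_1\le \frac{2(\varepsilon_0+\varepsilon_M)}{r^{(0)}_{\min}}.
\]
It is then enough to have $\frac{4(\varepsilon_0+\varepsilon_M)}{r^{(0)}_{\min}}<\Delta^{(0)}_{\textnormal{PPR}}$.

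\emph{Translating $\varepsilon_0$.} The hypothesis is stated in terms of $\|P^{12}_{\textnormal{pop}}\|_\infty$, so $\varepsilon_0$ must be bounded by it. Row $i\le N$ of $P_{\textnormal{pop}}$ has top-right mass $\sum_{j>N}|K^\tp_{\tilde N}[i,j]|/r_i$, where $r_i=\sum_j|K^\tp_{\tilde N}[i,j]|\le r_{\max}$. This gives $\varepsilon_0\le r_{\max}\|P^{12}_{\textnormal{pop}}\|_\infty$. The sufficient condition therefore becomes
\[
\varepsilon_M < \frac{r^{(0)}_{\min}}{4}\Bigl(\Delta^{(0)}_{\textnormal{PPR}} - \frac{4r_{\max}\|P^{12}_{\textnormal{pop}}\|_\infty}{r^{(0)}_{\min}}\Bigr).
\]
The right-hand side is positive by the assumed bound on $\|P^{12}_{\textnormal{pop}}\|_\infty$.

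\emph{Solving for $M$.} Substitute $\varepsilon_M=C_{\textnormal{EDMD}}\sqrt{(2/M)\log(2\tilde N/\rho)}$ and square. The condition becomes
\[
M > \frac{32\log(2\tilde N/\rho)\,C_{\textnormal{EDMD}}^2}{(r^{(0)}_{\min})^2\bigl(\Delta^{(0)}_{\textnormal{PPR}}-4r_{\max}\|P^{12}_{\textnormal{pop}}\|_\infty/r^{(0)}_{\min}\bigr)^2},
\]
which is exactly the second term of the maximum. The non-strict $\ge$ in the statement is harmless once the inequalities are checked carefully, or it can be absorbed by the strictness of the leakage hypothesis.

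\emph{Main obstacle.} The delicate step is the constant bookkeeping: the factor $2$ in gap stability, the factor $2$ from $\alpha=1/2$, and the conversion of $\varepsilon_0$ to $\|P^{12}_{\textnormal{pop}}\|_\infty$ through $r_{\max}$ versus $r^{(0)}_{\min}$. These must combine to give precisely the constants $4$ and $32$. If the cruder gap bound with factor $2$ gives a mismatched constant, I would instead use the sharper estimate for two probability vectors, $\|\pi-\pi'\|_\infty\le\tfrac12\|\pi-\pi'\|_1$, to recover the stated form.
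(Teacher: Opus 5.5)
Your proposal is correct and follows essentially the paper's route. You use the criterion $\|\pi_M-\pi^{(0)}_{\textnormal{pop}}\|_1<\Delta^{(0)}_{\textnormal{PPR}}/2$, then Theorem~\ref{thm:end-to-end} at $\alpha=1/2$, then invert $\varepsilon_M$. The only bookkeeping difference is that you bound $\varepsilon_0\le r_{\max}\|P^{12}_{\textnormal{pop}}\|_\infty$ directly, whereas the paper takes the intermediate bound $\frac{2\alpha}{1-\alpha}\|P^{12}_{\textnormal{pop}}\|_\infty$ from the theorem's proof and loosens it via $r_{\max}\ge r^{(0)}_{\min}$; both give the same expression.

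One correction on the boundary case where $M$ equals the threshold, which the paper's proof also passes over. The strictness of the leakage hypothesis does not help there. Your sharper estimate $\|\pi-\pi'\|_\infty\le\tfrac12\|\pi-\pi'\|_1$ for probability vectors does, since it gives $\Delta_{\textnormal{PPR}}\ge\Delta^{(0)}_{\textnormal{PPR}}/2>0$.
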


The bound is of the form $M \gtrsim C_{\text{EDMD}}^2\log(\tilde N/\rho)/\bigl(\Delta^{(0)}_{\textnormal{PPR}} - c\,(r_{\max}/r^{(0)}_{\min})\|P_{\textnormal{pop}}^{12}\|_\infty\bigr)^2$: the leakage shrinks the effective gap from $\Delta^{(0)}_{\textnormal{PPR}}$ to $\Delta^{(0)}_{\textnormal{PPR}} - c\,(r_{\max}/r^{(0)}_{\min})\|P_{\textnormal{pop}}^{12}\|_\infty$ and inflates the sample requirement quadratically. The $\alpha = 1/2$ specialization eliminates any $\alpha$-dependent prefactor: because PPR leaks no mass outside \arXivtag[$\Dc_N$], $\Delta^{(0)}_{\textnormal{PPR}}$ stays positive without requiring $\alpha$ near $1$. The proof, the analogous PR sample complexity bound (Corollary~\ref{cor:pr-sample-complexity}), and the related finite-sample leakage bound (Corollary~\ref{cor:finite-sample-leakage}) are deferred to Appendix~\ref{app:finite-sample}.

	\vspace{-0.1cm}
\subsection{Sketch of proof for Theorem \ref{thm:detection}}\label{sec:lemmas}
	\vspace{-0.2cm}
Full proofs of our main results are in Appendix~\ref{sec:detection_proofs}. Here we give a brief sketch of the proof of Theorem \ref{thm:detection}. Observe that, due to the block lower-triangular structure, it may be easier to detect on the proxy matrix  $P^{(0)}$. Accordingly, define the auxiliary gaps 
\begin{equation}\label{eq:delta0-defs}
	\Delta^{(0)}_{\textnormal{PR}} := \min_{i\leq N}\pi^{(0)}(i) - \max_{j > N}\pi^{(0)}(j), \qquad
	\Delta^{(0)}_{\textnormal{PPR}} := \min_{i\leq N}\pi^{(0)}_{\mathcal{S}}(i) - \max_{j > N}\pi^{(0)}_{\mathcal{S}}(j),
\end{equation}
where $\pi^{(0)}, \pi^{(0)}_{\mathcal{S}}$ are the PR/PPR vectors of $P^{(0)}$. 
If the off-diagonal block $P^{12}$ is small, then $P$ and $P^{(0)}$ are close so they should have similar PR/PPR vectors. More precisely, we show (see Lemma~\ref{lem:pr-ppr-perturbation} in  Appendix~\ref{sec:detection_proofs})  the PR/PPR vectors of $P$ and $P^{(0)}$ differ in $\ell^1$ by at most $\frac{2\alpha}{1-\alpha}\|P^{12}\|_\infty$. 

Therefore, detection on $P$ succeeds if detection on $P^{(0)}$ holds by a large enough margin compared to the off-diagonal block $P^{12}$: For $\textup{method}\in \{ \textup{PR}, \textup{PPR} \}$, 
\begin{align}\label{eq:detection_reduction}
	\Delta_{\textup{method}} > 0 \quad  \Longleftarrow  \quad \|P^{12}\|_\infty \;<\; \frac{1-\alpha}{4\alpha}\,\Delta^{(0)}_{\textnormal{method}}. 
\end{align}

Now, due to the block-triangular structure of $P^{(0)}$, we can establish the following closed-form expressions for the auxiliary gaps $\Delta^{(0)}_{\textnormal{PR}}$ and $\Delta^{(0)}_{\textnormal{PPR}}$ in terms of the diagonal blocks of the resolvent $(I - \alpha P^{(0)})^{-1}$, namely $R_{11} := (I - \alpha P^{11,(0)})^{-1}$ and $R_{22} := (I - \alpha P^{22,(0)})^{-1}$: 

\begin{lemma}[Closed-form auxiliary gaps]\label{lem:gaps-closed-form}
	For $\alpha \in (0,1)$, both $R_{11}$ and $R_{22}$ are entrywise nonnegative with finite entries, and
	\begin{equation}\label{eq:delta-closed}
		\Delta^{(0)}_{\textnormal{PR}} \;=\; \frac{1-\alpha}{\tilde{N}}\left[\min_{i \leq N}\sum_{v \leq N} s_v^{\textnormal{eff}}\,[R_{11}]_{vi} \;-\; \max_{j > N}\sum_{w > N}[R_{22}]_{wj}\right],\,\, \Delta^{(0)}_{\textnormal{PPR}} \;=\; \min_{i \leq N}\frac{1-\alpha}{|\mathcal{S}|}\sum_{v \in \mathcal{S}}[R_{11}]_{vi},
	\end{equation}
	where $s_v^{\textnormal{eff}} := 1 + \alpha\sum_{w > N}[R_{22}P^{21,(0)}]_{wv} \geq 1$ for $v \leq N$. Moreover, $\pi_{\mathcal{S}}^{(0)}(j) = 0$ for every $j > N$, so the max-leakage term in \eqref{eq:delta0-defs} vanishes identically. \arXivtag[Here, row and column indices of $R_{22}$ start at $N+1$, consistent with the full $\tilde{N}\times\tilde{N}$ indexing.]
\end{lemma}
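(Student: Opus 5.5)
The proof is a direct block computation of the resolvent of $P^{(0)}$, followed by reading off the PR/PPR vectors from~\eqref{eq:ppr-resolvent}. Since $P^{12,(0)}=O$, the matrix $I-\alpha P^{(0)}$ is block lower-triangular:
\[
I-\alpha P^{(0)}=\begin{bmatrix} I-\alpha P^{11,(0)} & O\\ -\alpha P^{21,(0)} & I-\alpha P^{22,(0)}\end{bmatrix}.
\]

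\emph{Nonnegativity and finiteness.} I first check that each diagonal block is invertible with a nonnegative inverse. The block $P^{11,(0)}=\mathcal{R}(P^{11})$ is row-stochastic. I assume every row of $P^{11}$ has a nonzero sum, since this is implicit in the definition of $P^{(0)}$. The block $P^{22,(0)}=P^{22}$ is row-substochastic. Hence both blocks have $\|\cdot\|_\infty\le 1$, so $\|\alpha P^{kk,(0)}\|_\infty\le\alpha<1$. The Neumann series
\[
R_{kk}=\sum_{t\ge0}\alpha^t (P^{kk,(0)})^t
\]
therefore converges absolutely. Every term is entrywise nonnegative, so $R_{11}$ and $R_{22}$ are nonnegative with finite entries.

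\emph{Block resolvent.} Direct multiplication verifies
\[
(I-\alpha P^{(0)})^{-1}=\begin{bmatrix} R_{11} & O\\ \alpha R_{22}P^{21,(0)}R_{11} & R_{22}\end{bmatrix}.
\]
Write $\mathbf{s}=(\mathbf{s}_1,\mathbf{s}_2)$ conformally. Then~\eqref{eq:ppr-resolvent} gives
\[
(\pi^{(0)}_{\mathbf{s}})^\tp=(1-\alpha)\bigl[\,\mathbf{s}_1^\tp R_{11}+\alpha\,\mathbf{s}_2^\tp R_{22}P^{21,(0)}R_{11},\;\; \mathbf{s}_2^\tp R_{22}\,\bigr].
\]

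\emph{PPR case.} Here $\mathcal{S}\subset\{1,\dots,N\}$, so $\mathbf{s}_2=0$. The second block of $\pi^{(0)}_{\mathcal{S}}$ vanishes, i.e.\ $\pi^{(0)}_{\mathcal{S}}(j)=0$ for all $j>N$. The first block gives
\[
\pi^{(0)}_{\mathcal{S}}(i)=\frac{1-\alpha}{|\mathcal{S}|}\sum_{v\in\mathcal{S}}[R_{11}]_{vi}, \qquad i\le N.
\]
Substituting these two facts into~\eqref{eq:delta0-defs} yields the stated formula for $\Delta^{(0)}_{\textnormal{PPR}}$, since the max term is $0$.

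\emph{PR case.} Here $\mathbf{s}=\mathbf{1}/\tilde N$. For $i\le N$, the first block gives
\[
\pi^{(0)}(i)=\frac{1-\alpha}{\tilde N}\sum_{v\le N}\Bigl(1+\alpha\sum_{w>N}[R_{22}P^{21,(0)}]_{wv}\Bigr)[R_{11}]_{vi}=\frac{1-\alpha}{\tilde N}\sum_{v\le N}s_v^{\rm eff}[R_{11}]_{vi}.
\]
For $j>N$, the second block gives
\[
\pi^{(0)}(j)=\frac{1-\alpha}{\tilde N}\sum_{w>N}[R_{22}]_{wj}.
\]
Taking the min over $i$ and the max over $j$, and factoring out the common positive constant $(1-\alpha)/\tilde N$, gives the stated closed form for $\Delta^{(0)}_{\textnormal{PR}}$. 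Finally, $s_v^{\rm eff}\ge1$ because $R_{22}\ge0$ and $P^{21,(0)}=P^{21}\ge0$.

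There is no real obstacle here. The only points needing care are the well-posedness of the row normalization of $P^{11}$ (no zero rows), which is needed for $P^{11,(0)}$ to be stochastic, and keeping the indexing of $R_{22}$, $P^{21}$ consistent with the global $\tilde N$ labels.
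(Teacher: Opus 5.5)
Your proposal is correct and follows the paper's proof essentially step for step: first the block lower-triangular resolvent with Neumann-series nonnegativity of $R_{11},R_{22}$, then reading off the PR and PPR vectors from the two block columns. The differences are cosmetic. You verify the block inverse by direct multiplication, whereas the paper obtains it by induction on powers of $P^{(0)}$ and summing the Neumann series, noting the block-inverse identity as an alternative. You also derive one formula for a general $\mathbf{s}$ and then specialize, whereas the paper treats PR and single-seed PPR separately.
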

Obtaining explicit lower bounds on the auxiliary gaps $\Delta^{(0)}_{\textup{method}}$ from the above expressions and combining with \eqref{eq:detection_reduction} then yields the sufficient condition  for detection on $P$ in Theorem \ref{thm:detection}.\arXivtag The two expressions enter Theorem~\ref{thm:detection} in structurally different ways.

\emph{PPR: direct use in (ii).} The closed form \eqref{eq:delta-closed} depends only on $R_{11}$ evaluated at the seed set $\mathcal{S}$ and is directly usable as a detection threshold. Positivity $\Delta^{(0)}_{\textnormal{PPR}} > 0$ reduces to reachability---every $i \leq N$ must be reachable from some $v \in \mathcal{S}$ in $P^{11,(0)}$---because $[R_{11}]_{vi} > 0$ iff $v$ reaches $i$ in $P^{11,(0)}$.

\emph{PR: relaxation to a usable bound for (i).} The closed form \eqref{eq:delta-closed} couples $R_{11}$ to \arXivtag[bottom block] quantities through $s^{\textnormal{eff}}$ and the column sums of $R_{22}$, obstructing a clean condition on $P^{11,(0)}$ alone. Dropping $s_v^{\textnormal{eff}} \geq 1$ to $1$ and bounding $\|R_{22}\|_\infty \leq 1/(1-\alpha+\alpha\eta)$ (from $\|P^{22}\|_\infty \leq 1-\eta$) yields the \emph{mixing relaxation}
\begin{equation}\label{eq:delta-PR-bound}
	\Delta^{(0)}_{\textnormal{PR}} \;\geq\; \frac{N}{\tilde{N}}\,q_{\min}^\alpha \;-\; \frac{(\tilde{N}-N)(1-\alpha)}{\tilde{N}(1-\alpha+\alpha\eta)},
\end{equation}
where $(\mathbf{q}^{\alpha})^\tp := \tfrac{1-\alpha}{N}\mathbf{1}_N^\tp R_{11}$ and $q_{\min}^\alpha := \min_{i \leq N}q_i^\alpha$. Positivity of the right-hand side requires the \emph{mixing condition} $q_{\min}^\alpha > (\tilde{N}-N)(1-\alpha)/\bigl(N(1-\alpha+\alpha\eta)\bigr)$, which is strictly stronger than reachability.

	\vspace{-0.2cm}
\section{Numerical experiments}\label{sec:examples}
	\vspace{-0.3cm}
We now test Algorithm~\ref{algor:PR-EDMD} on four systems of increasing complexity. For error plots, we use 20 random seeds and depict the mean$\pm$1SD by shaded regions. 
Full experimental details (dictionaries, parameters, precise definitions of errors, additional figures) are in Appendix~\ref{app:experiments}, and all relevant codes can be found at \texttt{\anonrepo}.

\begin{wrapfigure}[14]{r}{0.52\textwidth}
	\vspace{-12pt}
	\centering
	\includegraphics[width=0.52\textwidth]{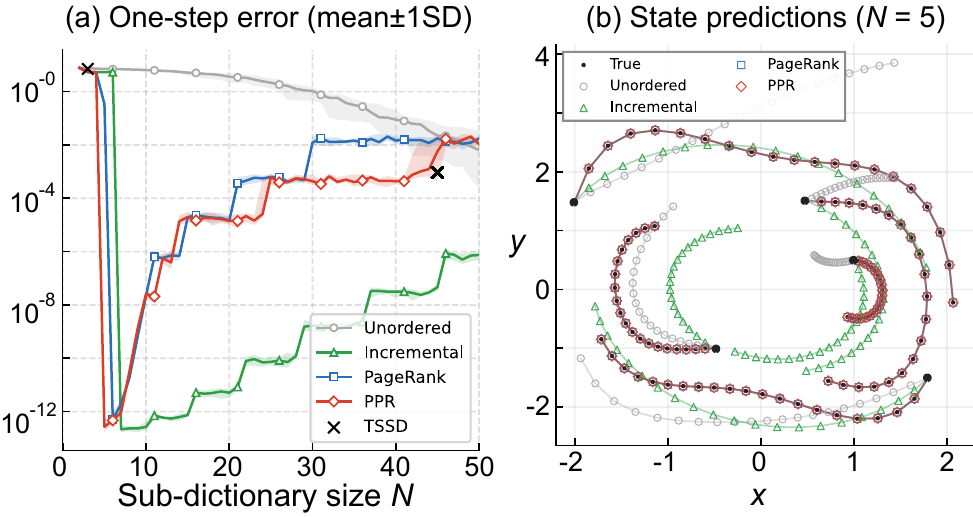}
	\caption{Duffing oscillator. (a) One-step error vs.\ sub-dictionary size. (b) 20-step state predictions with 5 observables.}
	\label{fig:Duffing-pred-error}
\end{wrapfigure}
\textbf{Duffing and Van der Pol oscillators.} We first apply the method to the Duffing oscillator ($\dot{x}=y$, $\dot{y}=-\delta y + \gamma x - \beta x^3$) and the Van der Pol oscillator, using 90 bivariate Laguerre polynomials with $M = 2000$ samples. In both systems, the PPR-ordered EDMD matrix reveals a clear approximate zero block structure (Figs.~\ref{fig:Duffing-heatmap}--\ref{fig:vdp-heatmap} in Appendix~\ref{app:experiments}). 
The PPR-selected sub-dictionary achieves $\sim 10^{-12}$ one-step error with only 5 observables, outperforming the others in multi-step state predictions as well (Fig.~\ref{fig:Duffing-pred-error}).
For the tunable symmetric subspace decomposition (TSSD), we varied the accuracy parameter $\epsilon$ from 0 to 1 with 0.01 intervals, but only three sizes $N=3, 45, 65$ are detected.
For the Van der Pol oscillator, the advantage of PPR over standard PR is more pronounced (Fig.~\ref{fig:vdp-pred-error} in Appendix~\ref{app:experiments}). 

\begin{wraptable}{r}{0.52\textwidth}
	\centering
	\small
	\caption{Ratio of the error for each ordering to the error for a random ordering, reported as mean \(\pm\) standard deviation over 20 random seeds. Values below 1 indicate improvement over random ordering.}
	\label{tab:ala2}
	\begin{tabular}{l c c c c}
		\toprule
		Method & $N\!=\!5$ & $N\!=\!10$ & $N\!=\!20$ \\
		\midrule
		PPR   & \textbf{0.30 $\pm$ 0.04} & \textbf{0.38 $\pm$ 0.06} & \textbf{0.55 $\pm$ 0.08} \\
		PR    & 1.09 $\pm$ 0.18 & 1.18 $\pm$ 0.21 & 1.33 $\pm$ 0.22 \\
		PCCA+ & 0.68 $\pm$ 0.11 & 0.77 $\pm$ 0.14 & 0.99 $\pm$ 0.16 \\
		TICA  & 0.94 $\pm$ 0.15 & 1.06 $\pm$ 0.19  & 1.44 $\pm$ 0.24 \\
		\bottomrule
	\end{tabular}
	\vspace{-8pt}
\end{wraptable}

\textbf{Three-well Ramachandran potential.} We test on a molecular dynamics benchmark: overdamped Langevin diffusion on a three-well Ramachandran potential~\cite{ramachandran1963stereochemistry} (see Appendix~\ref{app:experiments} for details). The dictionary consists of $\tilde{N} = 236$ observables (Fourier modes, cross terms, Gaussian RBFs, and 4 state coordinates $\sin\phi, \cos\phi, \sin\psi, \cos\psi$), with $M = 10^5$ and 20 random seeds. We compare PPR (seeded at the 4 state coordinates, since predicting the angular state is the primary objective), standard PR, time-lagged independent component analysis (TICA)~\cite{perez2013identification}, PCCA+~\cite{deuflhard2005robust}, and random ordering. Note that this example is stochastic and is therefore outside the deterministic theory above; it is included as an empirical demonstration of the PPR selection principle for Markovian dynamics.

\begin{wrapfigure}[13]{r}{0.52\textwidth}
	 \vspace{-12pt}
	\centering
	\includegraphics[width=0.52\textwidth]{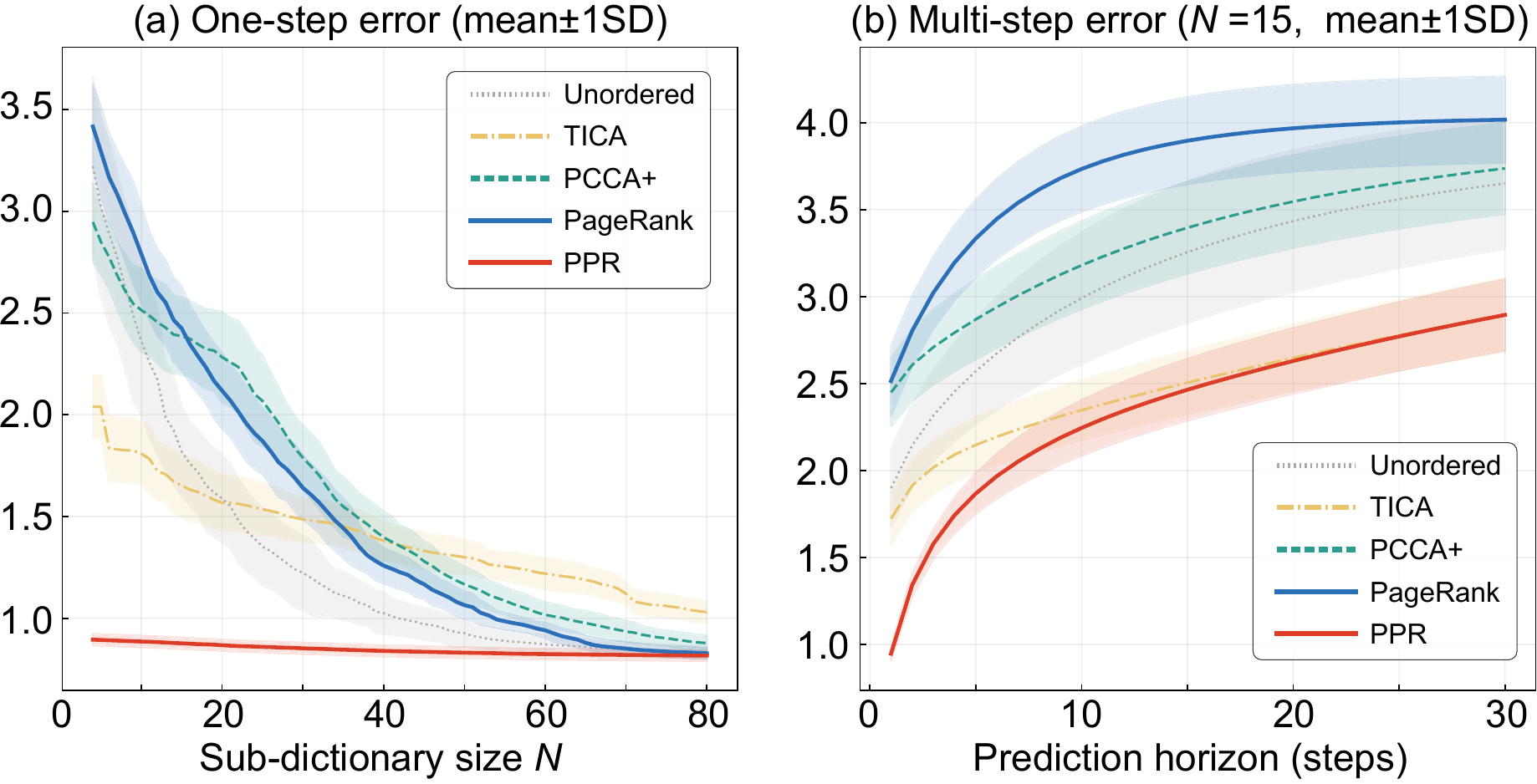}
	\vspace{-0.5cm}
	\caption{Three-well Ramachandran potential ($\tilde{N} = 236$). (a) One-step error vs.\ sub-dictionary size. (b) Multi-step prediction errors at $N=15$. }
	\label{fig:ala2-errors}
	\vspace{-6pt}
\end{wrapfigure}

For one-step predictions, PPR significantly outperforms all other methods at small sub-dictionary sizes (Table~\ref{tab:ala2}): at $N=5$, PPR reduces one-step error by 70\% over Random, compared to 32\% for PCCA+ and 6\% for TICA. The advantage persists at all $N$; TICA and PR degrade as $N$ gets larger, underperforming even Random. Fig.~\ref{fig:ala2-errors} shows that PPR achieves the lowest one-step and multi-step prediction errors at all horizons, confirming good internal dynamics propagation. 

The composition of selected observables is physically revealing: PPR places the 4 state coordinates at ranks 1--4, followed by basin-boundary RBFs (Fig.~\ref{fig:ala2-landscape} in Appendix~\ref{app:experiments}). The four state coordinates capture three macrostates. On the other hand, the basin-boundary RBFs seem to effectively capture inter-basin transitions, yielding accurate predictions with smaller sub-dictionaries. 

\begin{wrapfigure}[14]{r}{0.52\textwidth}
	\vspace{-12pt}
	\centering
	\includegraphics[width=0.52\textwidth]{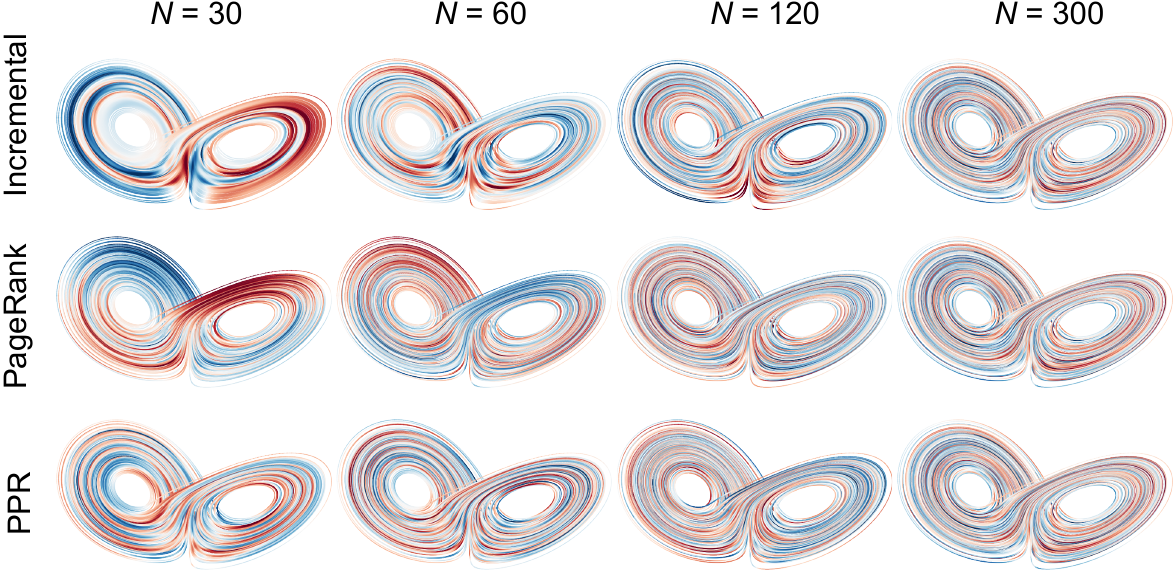}
	\caption{Pseudo-eigenfunctions ($\approx 6\,\textnormal{rad/s}$) on the Lorenz attractor for varying sub-dictionary sizes $N$. PPR recovers the oscillatory structure at $N = 30$--$60$, while Incremental requires $N \geq 120$.}
	\label{fig:lor-eig}
	\vspace{-8pt}
\end{wrapfigure}
\textbf{Lorenz system.} We apply Algorithm~\ref{algor:PR-EDMD} to the Lorenz system in the chaotic regime ($\sigma\!=\!10, \rho\!=\!28, \beta\!=\!8/3$), using a time-delay embedding with $\tilde{N}=300$ observables and $M = 2\times 10^5$ snapshot pairs. This trajectory-sampling experiment is not covered by the i.i.d. finite-sample theorem; it tests the method in the ergodic-data regime, where the data is sampled according to an invariant physical SRB measure. Rather than prediction error, we examine whether a PPR-selected sub-dictionary recovers dynamically relevant pseudo-eigenfunctions. Fig.~\ref{fig:lor-eig} shows the real parts of the pseudo-eigenfunctions at frequency $\approx 6\,\textnormal{rad/s}$: PPR captures the characteristic oscillatory structure wrapping around unstable periodic orbits~\cite{Colbrook2024-MultiverseDMD, colbrook2024rigorous} at $N = 30$--$60$, while Incremental ordering requires $N \geq 120$. This demonstrates that PPR identifies a compressed, informative sub-dictionary for Koopman spectral analysis.

	\vspace{-0.2cm}
\section{Conclusions}\label{sec:conclusions}
	\vspace{-0.3cm}
We developed a framework connecting zero block structures in EDMD matrices to Koopman invariant subspaces, and proposed Algorithm~\ref{algor:PR-EDMD}, which applies PPR to a row-normalized EDMD matrix to select sub-dictionaries. The off-diagonal block norm controls prediction accuracy \arXivtag[(Propositions~\ref{prop:error-block} and \ref{prop:error-bound})], and exact invariant subspaces produce exact zero blocks with finitely many data points (Theorem~\ref{thm:blockstr}). PPR detects these blocks at any damping factor (Theorem~\ref{thm:detection}) and achieves end-to-end detection at rate $O(1/\sqrt{M})$ with sample-complexity bounds for both PR and PPR (Theorem~\ref{thm:end-to-end}, Corollary~\ref{cor:sample-complexity}). More broadly, the multi-step Koopman leakage is bounded by the PPR mass outside the selected set (Proposition~\ref{prop:unconditional}), justifying the method without assuming invariant subspace existence: PPR returns the sub-dictionary that best closes the dynamics around those seeds. 

Experiments on the Duffing and Van der Pol oscillators, the Lorenz system, and the three-well Ramachandran potential confirm that PPR identifies compact, interpretable dictionaries, reducing one-step prediction error by up to $\sim 70\%$ over random selection at \arXivtag[a small sub-dictionary size] $N$ (see Table~\ref{tab:ala2}).
Future directions include constructing invariant structures beyond subsets of a prescribed dictionary and adaptive selection of the damping factor $\alpha$. Although the experiments indicate that our algorithm performs well for ergodic trajectory data (i.e., the Lorenz system) and stochastic systems (i.e., the Ramachandran potential), our deterministic theory assumes noise-free i.i.d. samples. Extension of the theory to ergodic trajectory data~\cite{nuske2023finite, elias2024datadriven} and to stochastic systems, where the Koopman operator measures expectations of observables and variance becomes an independent source of error, is an important open direction; the stochastic ResDMD framework of Colbrook et al.~\cite{colbrook2024beyond} provides a natural starting point for such an extension.

\begin{ack}
Q.L. acknowledge support from NSF-DMS-2308440, ONR-GRANT14408071 and Vilas Associate Award. 
\end{ack}

{\small
	\bibliographystyle{unsrt}
	\bibliography{References-Koopman}
}

\newpage

\newpage 

\appendix

\section{Background}\label{app:prelim}
This appendix provides a more comprehensive overview on the backgrounds on related literature summarized in Section~\ref{sec:algorithm}.

\subsection{Notation}\label{app:notation}
For convenience, we collect the key notation used throughout the paper in Table~\ref{tab:notation}.

\begin{table}[h]
	\centering
	\caption{Summary of key notation.}
	\label{tab:notation}
	\small
	\begin{tabular}{ll}
		\hline
		Symbol & Meaning \\
		\hline
		$\Dc_{N}$ & Dictionary of size $N$, i.e., $\Dc_{N} = \{\psi_1, \ldots, \psi_N\}$ \\ 
		$\Fc_{N}$ & Subspace spanned by $\Dc_{N}$ \\ 
		$K_{N,M}$ & EDMD matrix (with $M$ data points, dictionary of size $N$) \\
		$K_N$ & \arXivtag[Population-level Koopman matrix (i.e., $K_N=\lim_{M\to \infty}K_{N,M}$)] \\
		$K[i,j]$ & $(i,j)$-entry of matrix $K$; $K[i_1:i_2,\, j_1:j_2]$ denotes a submatrix \\
		$K_{\tilde{N}}^{(0)}$ &  Matrix $K_{\tilde{N}}$ with bottom-left block zeroed out \\
		$K_{\tilde{N},M}^{(0)}$ &  Matrix $K_{\tilde{N},M}$ with bottom-left block zeroed out \\
		$\mathcal{R}(A)$ & Row stochastic matrix obtained by row normalizing matrix $A$ \\ 
		$P$ &  Row-normalized $|K_{\tilde N,M}^{\tp}|$, i.e., \arXivtag[$P=\mathcal{R}(|K_{\tilde N,M}^{\tp}|)$], interpreted as a Markov chain \\
		$P^{(0)}$ &  Row-normalized $|K^{(0)}_{\tilde N,M}|^{\tp}$, i.e., $P^{(0)} = \mathcal{R}(|K^{(0)}_{\tilde N,M}|^{\tp})$ \\
		$\pi$ & Generic notation for \arXivtag[PR/PPR] vectors \\
		$\pi_{v}, \pi_{\mathcal{S}}, {\pi}_{\mathbf{s}}$ & PPR vectors with a single seed $v$, a seed set $\mathcal{S}$, and a preference vector $\mathbf{s}$, respectively. \\
		$\pi_M$, $\pi_{\mathbf{s}, M}$ & PR/PPR vectors associated EDMD matrices from $M$ data samples \\ 
		$\pi^{(0)}, \pi_{\mathcal{S}}^{(0)}$ & PR/PPR vectors of $P^{(0)}$ \\
		$\alpha$ & Damping factor $\in (0,1)$ for PR and PPR algorithms\\
		$\eta$ & Coupling parameter $= 1 - \|P^{22}\|_\infty.$ \\
		\hline
	\end{tabular}
\end{table}

\subsection{The Koopman operator}

The Koopman operator framework, introduced by Koopman~\cite{koopman1931hamiltonian} and extended by Koopman and von Neumann~\cite{Koopman-Neumann1932}, lifts a nonlinear dynamical system on the state space $\Mc$ to a linear (but infinite-dimensional) operator on a function space of \emph{observables}. This linearization is the foundation of modern data-driven analysis of nonlinear dynamics: dynamic mode decomposition (DMD)~\cite{schmid2010dynamic,rowley2009spectral,kutz2016dynamic} and its extensions~\cite{williams2015data,Klus2016JCompDyn,kevrekidis2016kernel} approximate the Koopman operator from trajectory data, and its spectral theory underpins mode decomposition, long-time forecasting, coherent-structure discovery, and reduced-order modeling across fluid mechanics, molecular dynamics, power systems, and control~\cite{Mezic2013AnnRevFluid,otto2021koopman,mezic2021koopman,colbrook2024beyond}.

We consider a discrete-time dynamical system
$\x_{k+1} = \mathbf{F}(\x_k)$, $k=0,1,2, \ldots$
with $\mathbf{F}:\Mc \to \Mc$ where $\x \in \Mc$ denotes the state of the system and $\Mc \subset \R^d$ is the state space. 
Let $\Fc$ be the Hilbert space $L^2(\Mc, \mu)$ where $\mu$ is the distribution on $\Mc$ from which data points are sampled. Throughout, we take $\mu$ to be a (positive) Borel probability measure on $\Mc$ and assume $\mathbf{F}$ is $\mu$-\emph{nonsingular}, i.e.\ $\mathbf{F}_{\#}\mu \ll \mu$; this ensures that the pointwise composition $f\circ\mathbf{F}$ is well-defined on $\mu$-equivalence classes and hence on $\Fc$. The induced \emph{Koopman operator} $\Koop:\Fc\to\Fc$, $\Koop f = f\circ\mathbf{F}$, is bounded precisely when the Radon--Nikodym derivative $d(\mathbf{F}_{\#}\mu)/d\mu \in L^\infty(\mu)$, in which case $\|\Koop\|_{\Fc\to\Fc}^2 = \|d(\mathbf{F}_{\#}\mu)/d\mu\|_{L^\infty}$; it is an isometry when $\mu$ is $\mathbf{F}$-invariant. We assume this \arXivtag[nonsingularity] condition throughout. Given linearly independent functions $\psi_i \in \Fc$, $i=1,\ldots, N$, define a finite-dimensional subspace $\Fc_N = \Span\{\psi_1, \ldots, \psi_N\} \subset \Fc$ and denote the Koopman operator restricted on this subspace by $\Koop_{|\Fc_N}:\Fc_N \to \Fc$.

\paragraph{Linearity of $\Koop$.}
The defining feature of the Koopman formalism is that $\Koop$ is a \emph{linear} operator, irrespective of whether the underlying dynamics $\mathbf{F}$ is linear or nonlinear. Linearity is immediate from the definition: for $f,g\in\Fc$ and $a,b\in\mathbb{C}$,
\begin{equation}\label{eq:koopman-linearity}
	\Koop(af + bg)(\x) = (af + bg)\bigl(\mathbf{F}(\x)\bigr) = a\,f(\mathbf{F}(\x)) + b\,g(\mathbf{F}(\x)) = a\,(\Koop f)(\x) + b\,(\Koop g)(\x)
\end{equation}
for every $\x\in\Mc$, so $\Koop(af+bg) = a\Koop f + b\Koop g$. This identity holds pointwise on $\Mc$ and hence in $\Fc = L^2(\Mc,\mu)$. 

\paragraph{Linearity comes at the cost of dimension.}
Linearity on observables of $\Koop$ is a \emph{global and exact} property, but it is purchased by leaving the finite-dimensional state space $\Mc\subset\R^d$ in exchange for the typically infinite-dimensional function space $\Fc$. This is qualitatively different from the Jacobian linearization $D\mathbf{F}(\x^\star)$, which is a \emph{local} linear approximation valid only in a neighborhood of a fixed point: $\Koop$ represents the full nonlinear dynamics everywhere on $\Mc$, losslessly. The central modeling question raised by this trade-off, and the one that motivates our paper, is whether a finite-dimensional subspace $\Fc_N\subset\Fc$ can be chosen so that $\Koop|_{\Fc_N}$ is a faithful finite-matrix restriction of $\Koop$. For example, a subspace spanned by Koopman eigenfunctions is invariant under the action given by $\Koop$. 

\paragraph{Spectral consequences.}
Because $\Koop$ is linear, the full machinery of operator theory applies. It has a spectrum $\sigma(\Koop)\subset\mathbb{C}$, and eigenpairs $(\lambda_j,\varphi_j)$ with $\Koop\varphi_j = \lambda_j\varphi_j$ are the \emph{Koopman eigenvalues} and \emph{eigenfunctions}. Any observable $f\in\Fc$ admitting an eigenfunction expansion $f = \sum_j c_j\varphi_j$ evolves diagonally under iteration:
\begin{equation}\label{eq:koopman-spectral-evolution}
	(\Koop^k f)(\x) = f(\mathbf{F}^k(\x)) = \sum_j c_j\,\lambda_j^k\,\varphi_j(\x),
\end{equation}
so nonlinear trajectories decompose into a superposition of geometrically scaling/rotating \emph{Koopman modes}~\cite{Mezic2013AnnRevFluid,rowley2009spectral}. When $\mathbf{F}$ preserves the measure $\mu$, $\Koop$ is an isometry on $L^2(\Mc,\mu)$ with $\sigma(\Koop)\subset\{z\in\mathbb{C}:|z|\leq 1\}$, and the unimodular part of the spectrum encodes the ergodic and quasi-periodic structure of the dynamics. These consequences\arXivtag, which are inaccessible from nonlinear state-space analysis alone, are what make the Koopman framework powerful for mode decomposition, reduced-order modeling, and long-horizon forecasting of nonlinear systems~\cite{mezic2021koopman,otto2021koopman,colbrook2024beyond}.

\paragraph{Finite-dimensional restriction.}
Linearity also underlies the matrix representation used throughout this paper. If $\Fc_N=\Span\{\psi_1,\ldots,\psi_N\}$ is $\Koop$-invariant, then for each $j$ there exist unique coefficients $K_{ij}\in\mathbb{C}$ with $\Koop\psi_j=\sum_{i=1}^N K_{ij}\psi_i$, and the matrix $K=(K_{ij})\in\mathbb{C}^{N\times N}$ represents $\Koop|_{\Fc_N}$ in the basis $\{\psi_i\}$. When $\Fc_N$ is only \emph{approximately} invariant, the corresponding finite matrix captures only the Galerkin projection of $\Koop$ onto $\Fc_N$, and the deviation of $\Koop\psi_j$ from $\Fc_N$ is the source of the residual structure analyzed in the sequel.

\subsection{Extended dynamic mode decomposition}\label{sec:app-edmd}

\emph{Convention.} Throughout this appendix and all proofs in Appendix~\ref{sec:proofs}, we use the same row-vector convention as in the main text: $\mathbf{\Psi}(\x) \in \mathbb{C}^{1\times N}$, $\mathbf{\Psi}(\X) \in \mathbb{C}^{M\times N}$, and $K_{N,M} = \mathbf{\Psi}(\X)^{+}\,\mathbf{\Psi}(\Y)$ as in \eqref{eq:EDMD-mat-formula}. \arXivtag[To follow the convention of transition matrices in Markov chain theory, we define the row-stochastic matrix $P = \mathcal{R}(|K_{\tilde N,M}^{\tp}|)$] by row-normalizing the \emph{transpose} of $|K_{\tilde N,M}|$.

Extended dynamic mode decomposition (EDMD), introduced by Williams, Kevrekidis, and Rowley~\cite{williams2015data} as a generalization of the original DMD algorithm~\cite{schmid2010dynamic,rowley2009spectral}, approximates the Koopman operator by Galerkin projection onto the span of a user-chosen dictionary of observables. Convergence to the $L^2$-projected Koopman operator as $M \to \infty$ was established by Korda and Mezi\'c~\cite{korda2018convergence}; kernel and Hilbert-space extensions appear in~\cite{kevrekidis2016kernel,Klus2016JCompDyn}, and recent work has produced rigorous finite-sample rates and spectral guarantees~\cite{zhang2023quantitative,nuske2023finite,colbrook2024rigorous,Kostic-NeuRips2023}. The quality of the approximation hinges on whether the dictionary spans an (approximately) Koopman invariant subspace\arXivtag[,] which is the motivation for the sub-dictionary selection problem considered in this paper.

Let $\{\x^{(1)}, \ldots, \x^{(M)}\} \subset \Mc$ be i.i.d. samples from $\mu$ and $\{\y^{(1)}, \ldots, \y^{(M)}\}$ their one-step evolutions, i.e., $\y^{(i)} = \mathbf{F}(\x^{(i)})$.
A \textit{dictionary} of linearly independent observables is $\arXivtag[\Dc_N =] \{\psi_1, \ldots, \psi_N\} \subset \Fc$. With $\mathbf{\Psi}(\x) = [\psi_1(\x), \ldots, \psi_N(\x)] \in \mathbb{C}^{1\times N}$, define
\begin{equation}\label{eq:eval-mat1}
	\mathbf{\Psi}(\X) = \begin{bmatrix}
		\mathbf{\Psi}(\x^{(1)}) \\ \vdots \\ \mathbf{\Psi}(\x^{(M)})
	\end{bmatrix} \in \mathbb{C}^{M\times N}, \quad
	\mathbf{\Psi}(\Y) = \begin{bmatrix}
		\mathbf{\Psi}(\y^{(1)}) \\ \vdots \\ \mathbf{\Psi}(\y^{(M)})
	\end{bmatrix} \in \mathbb{C}^{M\times N}.
\end{equation}
The \textit{EDMD matrix} \arXivtag[with the dictionary $\Dc_N$ and the data set $\{\x^{(1)}, \ldots, \x^{(M)}\}$] is $K_{N,M} = \mathbf{\Psi}(\X)^{+}\,\mathbf{\Psi}(\Y)$. If $\mathbf{\Psi}(\X)$ has full column rank, then $K_{N,M} = \bigl(\mathbf{\Psi}(\X)^{\ast}\mathbf{\Psi}(\X)\bigr)^{-1}\mathbf{\Psi}(\X)^{\ast}\mathbf{\Psi}(\Y)$. Here we make the following remark on Assumption~\ref{assume:full-rank} that states $\mathbf{\Psi}(\X)$ has full column rank.

\begin{remark}\label{rem:fullcolrank}
	Assumption~\ref{assume:full-rank} is ensured  \textit{almost surely} with respect to the distribution $\mu$ by the following condition of measure-theoretic form: 
	$\mu(\{ \x \in \Mc : c_1 \psi_1(\x) + \ldots + c_N\psi_N(\x) = 0 \}) = 0$ for any $(c_1, \ldots, c_N) \in \R^N \setminus\{\mathbf{0}\}$,
    \arXivtag[, which follows Assumption 1 in~\cite{korda2018convergence}.] 
	This holds, for instance, for real-analytic basis functions when $\mu$ is not supported on a proper real-analytic subvariety of $\Mc$; more generally, it suffices that the span of the $\{\psi_i\}$ contains no function that vanishes on a $\mu$-positive set. We state the condition in this measure-theoretic form so that it is meaningful even when $\Mc$ has no interior (e.g., an attractor or fractal invariant set): what matters is that $\mu$ charges enough of $\Mc$ to distinguish every nonzero linear combination of dictionary functions, not that $\Mc$ has full topological support.
\end{remark}

When $\Fc_N$ is invariant under $\Koop$, the Frobenius error vanishes, giving
\begin{equation}\label{eq:KpsiX-psiY}
	\mathbf{\Psi}(\X)\,K_{N,M} = \mathbf{\Psi}(\Y).
\end{equation}
That is, if a dictionary spans a Koopman invariant subspace, the finite-dimensional EDMD matrix exactly captures the Koopman dynamics on that subspace.

\paragraph{Galerkin / $L^2$-projection interpretation.}
In the infinite-data limit $M\to\infty$, the Gram matrices converge (by the strong law of large numbers) to their population counterparts $G_{ij} = \langle \psi_i, \psi_j\rangle_\mu$ and $A_{ij} = \langle \Koop\psi_j, \psi_i\rangle_\mu$, and $K_{N,M}$ converges to $K_N = G^{-1}A$, the matrix of the \emph{Galerkin projection} $\mathcal{P}_{\Fc_N}\Koop|_{\Fc_N}$ in the basis $\{\psi_i\}$~\cite{korda2018convergence}. Here $\mathcal{P}_{\Fc_N}$ is the $L^2(\Mc,\mu)$-orthogonal projection onto $\Fc_N$. Thus EDMD is not an arbitrary fit: it recovers the best $\Fc_N$-representation of $\Koop$ in the $L^2$ sense. When $\Fc_N$ is exactly $\Koop$-invariant, $\mathcal{P}_{\Fc_N}\Koop|_{\Fc_N} = \Koop|_{\Fc_N}$ and EDMD is exact, so the residual $\mathbf{\Psi}(\Y) - \mathbf{\Psi}(\X)\,K_{N,M}$ captures the failure of invariance.

\paragraph{Relation to DMD.}
The original DMD algorithm~\cite{schmid2010dynamic,rowley2009spectral,kutz2016dynamic} corresponds to \arXivtag[EDMD with] the choice of identity dictionary $\mathbf{\Psi}(\x) = \x$: the EDMD matrix reduces to the best linear fit $Y \approx AX$ between snapshot matrices, exact only when $\mathbf{F}$ is itself linear. EDMD generalizes this by letting $\mathbf{\Psi}$ contain arbitrary nonlinear observables, such as monomials, radial basis functions, Hermite polynomials, or data-driven features, so that the method can capture nonlinear dynamics that admit a finite-dimensional linear representation on the lifted space. Kernel DMD~\cite{kevrekidis2016kernel} sidesteps the explicit choice of dictionary by operating in a reproducing kernel Hilbert space; \arXivtag[Klus et al.] unifies EDMD, kernel EDMD, and variational approaches to Koopman and Perron--Frobenius approximation \cite{Klus2016JCompDyn}.

\paragraph{Finite-sample and spectral guarantees.}
Quantitative finite-sample error bounds for EDMD have been a recent focus. Zhang and Zuazua~\cite{zhang2023quantitative}, N\"uske et al.~\cite{nuske2023finite}, and Colbrook et al.~\cite{colbrook2024rigorous} give $\arXivtag[1/\sqrt{M}]$-type rates with explicit constants depending on the dictionary Gram condition number and sub-Gaussian tail parameters of $\mathbf{\Psi}(\x)$. Kostic et al.~\cite{Kostic-NeuRips2023} obtain sharper operator-norm bounds in RKHS settings via concentration of the empirical covariance; \arXivtag[Colbrook et al.] address the difficult problem of \emph{spurious spectra}---eigenvalues of $K_{N,M}$ that do not correspond to any spectral object of $\Koop$---through pseudospectral and residual-DMD techniques~\cite{colbrook2024rigorous,colbrook2024beyond}. The concentration-of-measure tradition built on matrix Bernstein inequalities forms the technical backbone of these results~\cite{tropp2015introduction,tropp2012user}.

\paragraph{Dictionary design and learning.}
The practical performance of EDMD is dominated by dictionary choice. Hand-crafted dictionaries include monomials up to a \arXivtag[certain] degree, radial basis functions~\cite{williams2015data}, and Hermite polynomials; data-driven approaches learn the dictionary jointly with $K_{N,M}$ via neural networks~\cite{li2017extended,yeung2019learning,takeishi2017learning,wehmeyer2018time} or regression-with-sparsity schemes~\cite{brunton2016SINDy,brunton2016PLOSONE}. When samples come from a single trajectory rather than i.i.d.\ draws from $\mu$, ergodic variants of EDMD are required~\cite{arbabi2017ergodic,kawahara2016dynamic}. In all these settings, the underlying question is the same: does the chosen dictionary span (or nearly span) a $\Koop$-invariant subspace~\cite{haseli2021learning,haseli2023generalizing}? Answering this question a posteriori, from the structure of $K_{N,M}$ itself, is the problem addressed by the PPR-based detection method developed in this paper.

\subsection{Personalized PageRank}

PageRank \arXivtag[(PR)] was introduced by \arXivtag[Page and Brin]~\cite{PageRank1998} to rank web pages by modeling a random surfer on the hyperlink graph who occasionally teleports to a uniformly random page. The algorithm's mathematical properties\arXivtag[, such as] convergence, sensitivity to the damping parameter, and robustness of the stationary distribution\arXivtag[,] were further developed in~\cite{langville2004deeper,ipsen2006pagerank}. 
Personalized PageRank (PPR), due to Haveliwala~\cite{haveliwala2003topic}, replaces the uniform teleportation distribution by a user-specified \emph{preference vector}, localizing the ranking around a chosen seed set. Andersen, Chung, and Lang~\cite{andersen2006local} established Cheeger-type guarantees for local clustering via sweep cuts on PPR vectors, and PPR has since become a canonical tool in graph learning, community detection, and semi-supervised learning on networks. In this paper we exploit PPR in a novel setting: we interpret the EDMD matrix itself as a weighted graph on observables and use PPR mass from a seed set of ``target'' observables to rank sub-dictionaries by the strength of their Koopman coupling.

Let $Q \in \R^{n \times n}$ be a row stochastic matrix, $\mathbf{s} \in \R^n$ a \emph{preference vector} satisfying $\mathbf{s}^\tp\mathbf{1} = 1$ and $s_i \geq 0$, and $\alpha \in [0,1)$ a \emph{damping factor}. Define the row stochastic \emph{transition kernel} 
\begin{equation}\label{eq:ppr-kernel}
	G_{\mathbf{s}} \coloneqq \alpha Q + (1-\alpha)\,\mathbf{1}\mathbf{s}^\tp.
\end{equation}
This describes a random walker who, at each step, follows $Q$ with probability $\alpha$ and teleports to a node drawn from $\mathbf{s}$ with probability $1-\alpha$. Since every entry satisfies $G_{\mathbf{s}}[i,j] \geq (1-\alpha)s_j$, the chain is primitive whenever $\mathbf{s}$ has full support, guaranteeing a unique stationary distribution by the Perron--Frobenius theorem.

The \emph{\arXivtag[PPR] vector} ${\pi}_{\mathbf{s}}$ is the stationary distribution of $G_{\mathbf{s}}$. From ${\pi}_{\mathbf{s}}^\tp = {\pi}_{\mathbf{s}}^\tp G_{\mathbf{s}}$ and ${\pi}_{\mathbf{s}}^\tp\mathbf{1} = 1$:
\begin{equation}\label{eq:ppr-fixedpoint}
	{\pi}_{\mathbf{s}}^\tp = \alpha\,{\pi}_{\mathbf{s}}^\tp Q + (1-\alpha)\,\mathbf{s}^\tp.
\end{equation}
Rearranging, ${\pi}_{\mathbf{s}}^\tp(I - \alpha Q) = (1-\alpha)\,\mathbf{s}^\tp$. Since $Q$ is row stochastic, all eigenvalues of $Q$ have modulus at most $1$, and $\alpha < 1$ ensures that $I - \alpha Q$ is nonsingular. Therefore
${\pi}_{\mathbf{s}}^\tp = (1-\alpha)\,\mathbf{s}^\tp(I - \alpha Q)^{-1}$.
Expanding the resolvent as a Neumann series $(I - \alpha Q)^{-1} = \sum_{k=0}^\infty \alpha^k Q^k$ gives
${\pi}_{\mathbf{s}}^\tp = (1-\alpha)\sum_{k=0}^\infty \alpha^k\, \mathbf{s}^\tp Q^k$,
which converges absolutely since $\alpha < 1$ and $\|Q^k\|_\infty = 1$. The entry $\pi_{\mathbf{s}}(j)$ equals the expected discounted fraction of time at node $j$; the factor $\alpha^k$ geometrically discounts $k$-step transitions. \arXivtag
A key structural property is \emph{linearity in the preference vector}: since the resolvent $(I - \alpha Q)^{-1}$ is independent of $\mathbf{s}$, the map $\mathbf{s} \mapsto {\pi}_{\mathbf{s}}$ is linear. All special cases are instances of this linearity.

\paragraph{Multi-seed PPR.}
For a seed set $\mathcal{S} \subset \{1,\ldots,n\}$, setting $\mathbf{s} = \frac{1}{|\mathcal{S}|}\sum_{v \in \mathcal{S}}\mathbf{e}_v$ gives the multi-seed PPR vector ${\pi}_{\mathcal{S}} = \frac{1}{|\mathcal{S}|}\sum_{v \in \mathcal{S}} {\pi}_v$ by linearity. The corresponding transition kernel is $G_{\mathbf{s}} = \alpha Q + \frac{1-\alpha}{|\mathcal{S}|}\sum_{v \in \mathcal{S}}\mathbf{1}\mathbf{e}_v^\tp$: the walker teleports uniformly to a node in $\mathcal{S}$.

\paragraph{Single-seed PPR.}
The special case $\mathcal{S} = \{v\}$ (equivalently, $\mathbf{s} = \mathbf{e}_v$) gives
\begin{equation}\label{eq:ppr-singleseed}
	\pi_v(j) = (1-\alpha)\sum_{k=0}^\infty \alpha^k\, Q^k[v,j],
\end{equation}
the expected discounted occupation of node $j$ for a walker starting at $v$ and teleporting back to $v$ at rate $1-\alpha$. PPR was introduced by Haveliwala~\cite{haveliwala2003topic} for topic-sensitive web search; Andersen, Chung, and Lang~\cite{andersen2006local} proved that sweep cuts on single-seed PPR vectors achieve Cheeger-type guarantees for detecting low-conductance clusters.

\paragraph{Standard PageRank.}
Setting $\mathcal{S} = \{1,\ldots,n\}$ recovers the standard PR vector. The transition kernel is the \emph{Google matrix} $G = \alpha Q + \frac{1-\alpha}{n}\mathbf{1}\mathbf{1}^\tp$~\cite{PageRank1998}. Since every entry of $G$ is at least $(1-\alpha)/n > 0$, $G$ is primitive and ${\pi}$ is the unique positive left eigenvector with ${\pi}^\tp\mathbf{1} = 1$.

For the limiting case $\alpha = 1$, the kernel reduces to $G_{\mathbf{s}} = Q$, and every stationary distribution concentrates on the closed communicating classes of $Q$, assigning zero probability to transient states.


	\section{Toy example for illustrating Theorem~\ref{thm:blockstr}}\label{app:toy-model}
	Here we illustrate zero block structure in an EDMD matrix guaranteed by Theorem~\ref{thm:blockstr} using the following toy model with $\tilde{N} = 9$ observables in the initial dictionary.
	
	\begin{example}\label{ex:toy2d}
		Consider the following discrete-time dynamical system with two variables $x_1$ and $x_2$:
		\begin{equation}\label{eq:toy-model-def}
			\begin{aligned}
				x_1(k+1) & = x_1(k)- \Delta t \cdot a x_1(k), \\ 
				x_2(k+1) & = x_2(k)- \Delta t \cdot b(x_2(k) - x_1(k)^2), 
			\end{aligned}
		\end{equation}
		where $a >0$ and $b >0$, which is a discrete-time approximation of the continuous-time system:
		\begin{equation*}
			\begin{aligned}
				\dot{x}_1 &= -a x_1, \\ 
				\dot{x}_2 &= -b (x_2 - x_1^2).
			\end{aligned}
		\end{equation*}
		This system \eqref{eq:toy-model-def} admits the invariant subspace spanned by $\Dc_3 \coloneqq \{x_1, x_2, x_1^2\}$ because their images under the action by the Koopman operator are contained in the same subspace. In other words, their one-step evolutions at time $k+1$ can be expressed as linear combinations of their values at time $k$ as follows:
		\begin{equation}\label{eq:toy-model-linear}
			\begin{bmatrix}
				(\Koop \psi_1)(\x(k)) \\
				(\Koop \psi_2)(\x(k)) \\
				(\Koop \psi_3)(\x(k))
			\end{bmatrix}^\tp =
			\begin{bmatrix}
				\psi_1(\x(k+1)) \\
				\psi_2(\x(k+1)) \\
				\psi_3(\x(k+1))
			\end{bmatrix}^\tp =
			\begin{bmatrix}
				\psi_1(\x(k)) \\
				\psi_2(\x(k)) \\
				\psi_3(\x(k))
			\end{bmatrix}^\tp
			\begin{bmatrix}
				1 - \Delta t \cdot a  & 0 & 0 \\
				0   & 1 - \Delta t \cdot b &  0 \\
				0  & \Delta t \cdot b  & (1 - \Delta t \cdot a)^2
			\end{bmatrix},
		\end{equation}
		where $\psi_1(\x)=x_1, \psi_2(\x)=x_2$, and $\psi_3(\x)=x_1^2$.
		This $3 \by 3$ matrix form of the Koopman operator is identical to the EDMD matrix obtained with the dictionary $\Dc_3$ as EDMD completely captures the behavior of the Koopman operator on the invariant subspace~\cite{haseli2023generalizing}. Indeed, the EDMD method with $M = 100$ i.i.d. samples from the uniform distribution on the square region $[-2,2]^2$ provides the following matrix:
		\begin{equation}\label{eq:EDMD-mat-3obs}
			\begin{bmatrix}
				0.92 & 0   & 0     \\
				0    & 0.8 & 0     \\
				0    & 0.2 & 0.846
			\end{bmatrix}
		\end{equation}
		which coincides with the matrix form in~\eqref{eq:toy-model-linear} where the parameter values are $\Delta t = 0.2$, $a=0.4$, and $b = 1.0$.
		
		According to Theorem~\ref{thm:blockstr}, EDMD still captures the invariant subspace with a larger dictionary. Here, we perform EDMD with an extended dictionary of polynomials of degree less than 4, $\Dc_9 \coloneqq \{x_1, x_2, x_1^2, x_1x_2, x_2^2, x_1^3, x_1^2x_2, x_1x_2^2, x_2^3\}$. The EDMD matrix is given by
		\begin{equation}\label{eq:EDMD-mat-9obs}
			\begin{blockarray}{cccccccccc}
				& x_1 & x_2 & x_1^2 & x_1x_2 & x_2^2 & x_1^3 & x_1^2x_2 & x_1x_2^2 & x_2^3\\ 
				\begin{block}{c[ccccccccc]}
					x_1\,      &  0.920 &  0.000 &  0.000 &  0.000 &  0.002 &  0.000 &  0.009 & -0.104 & -0.170 \\
					x_2\,      &  0.000 &  0.800 &  0.000 &  0.000 &  0.003 &  0.000 &  0.013 & -0.069 & -0.113 \\
					x_1^2\,    &  0.000 &  0.200 &  0.846 &  0.000 &  0.118 &  0.000 &  0.499 &  0.052 &  0.417 \\
					x_1x_2\,   &  0.000 &  0.000 &  0.000 &  0.736 &  0.003 &  0.000 &  0.012 &  0.722 &  0.018 \\
					x_2^2\,    &  0.000 &  0.000 &  0.000 &  0.000 &  0.627 &  0.000 & -0.057 & -0.050 &  0.379 \\
					x_1^3\,    &  0.000 &  0.000 &  0.000 &  0.184 & -0.003 &  0.779 & -0.015 &  0.147 &  0.089 \\
					x_1^2x_2\, &  0.000 &  0.000 &  0.000 &  0.000 &  0.324 &  0.000 &  0.696 &  0.018 &  0.286 \\
					x_1x_2^2\, &  0.000 &  0.000 &  0.000 &  0.000 &  0.002 &  0.000 &  0.008 &  0.616 & -0.043 \\
					x_2^3\,    &  0.000 &  0.000 &  0.000 &  0.000 & -0.003 &  0.000 & -0.014 & -0.007 &  0.559 \\
				\end{block}
			\end{blockarray}.
		\end{equation}
		
		It is clear that the top-left corner $3\by 3$ submatrix is identical to the EDMD matrix in \eqref{eq:EDMD-mat-3obs} obtained with the dictionary of the three observables, $\Dc_3 = \{x_1, x_2, x_1^2\}$, which spans the Koopman invariant subspace. Also, the bottom-left corner $6 \by 3$ matrix is a zero matrix, $O_{6 \by 3}$, as stated in Theorem~\ref{thm:blockstr}.
		
		There are more zero entries in the matrix \arXivtag[\eqref{eq:EDMD-mat-9obs}] besides the zero block at the bottom-left corner. They are not just numerical artifacts; they are consequences of other invariant subspaces. We can identify other subsets that span other Koopman invariant subspaces by permuting the rows and columns of the EDMD matrix. For instance, an ordered dictionary $\{x_1, x_1x_2, x_1^3, x_2, x_1^2, x_2^2, x_1^2x_2, x_1x_2^2, x_2^3\}$ (yet with the same elements as $\Dc_9$) provides the following EDMD matrix:
		\begin{equation}\label{eq:EDMD-mat-9obs-permuted}
			\begin{blockarray}{cccccccccc}
				& x_1 & x_1x_2 & x_1^3 & x_2 & x_1^2 & x_2^2 & x_1^2x_2 & x_1x_2^2 & x_2^3\\ 
				\begin{block}{c[ccccccccc]}
					x_1\,      &  0.920 &  0.000 &  0.000 &  0.000 &  0.000 &  0.002 &  0.009 & -0.104 & -0.170 \\
					x_1x_2\,   &  0.000 &  0.736 &  0.000 &  0.000 &  0.000 &  0.003 &  0.012 &  0.722 &  0.018 \\
					x_1^3\,    &  0.000 &  0.184 &  0.779 &  0.000 &  0.000 & -0.003 & -0.015 &  0.147 &  0.089 \\
					x_2\,      &  0.000 &  0.000 &  0.000 &  0.800 &  0.000 &  0.003 &  0.013 & -0.069 & -0.113 \\
					x_1^2\,    &  0.000 &  0.000 &  0.000 &  0.200 &  0.846 &  0.118 &  0.499 &  0.052 &  0.417 \\
					x_2^2\,    &  0.000 &  0.000 &  0.000 &  0.000 &  0.000 &  0.627 & -0.057 & -0.050 &  0.379 \\
					x_1^2x_2\, &  0.000 &  0.000 &  0.000 &  0.000 &  0.000 &  0.324 &  0.696 &  0.018 &  0.286 \\
					x_1x_2^2\, &  0.000 &  0.000 &  0.000 &  0.000 &  0.000 &  0.002 &  0.008 &  0.616 & -0.043 \\
					x_2^3\,    &  0.000 &  0.000 &  0.000 &  0.000 &  0.000 & -0.003 & -0.014 & -0.007 &  0.559 \\
				\end{block}
			\end{blockarray}.
		\end{equation}
		Now zero block structures are more noticeable. The $6 \by 3$ submatrix at the bottom-left corner is a zero matrix. Also, the $4 \by 5$ submatrix at the bottom-left corner is a zero matrix. These observations respectively indicate the facts that two dictionaries $\{x_1, x_1x_2, x_1^3\}$ and $\{x_1, x_1x_2, x_1^3, x_2, x_1^2\}$ span Koopman invariant subspaces. This allows the identification of \textit{all} Koopman invariant subspaces spanned by subsets of the dictionary $\Dc_9$.
	\end{example}

\section{Three-state example: exact detection windows}\label{app:3state}

To concretely illustrate the mixing-versus-reachability dichotomy of Theorem~\ref{thm:detection}, we carry out the full computation of the auxiliary gaps $\Delta^{(0)}_{\textnormal{PR}}$, $\Delta^{(0)}_{\textnormal{PPR}}$ and the detection windows $\{\alpha \in (0,1) : \text{Theorem~\ref{thm:detection}(i) or (ii) holds}\}$ on two minimal examples with $\tilde{N} = 3$, $N = 2$. In each example, we specify a row-stochastic matrix $P=\mathcal{R}(|K^\tp|)$ as if we start from $K = P^\tp$, so $r_{\max} = 1$. The matrix $P^{(0)} = \mathcal{R}(|K^{(0)}|^\tp)$ is the row-normalization of $P$ after zeroing $P^{12}$ (equivalently, \arXivtag[the rows of the top block] are renormalized to make $P^{(0)}$ row-stochastic), with $P^{11,(0)} = \mathcal{R}(P^{11})$, $P^{22,(0)} = P^{22}$, $P^{21,(0)} = P^{21}$. Both share the \arXivtag[bottom-block] row $P^{22,(0)} = 1-2\beta$ and $P^{21,(0)} = (\beta, \beta)$ with $\beta \in (0, 1/2]$. 

The leakage parameter is $\epsilon := \|P^{12}\|_\infty \in (0, 1)$. In both, $r^{(0)}_{\min} = 1-\epsilon$. Because the leakage is removed before renormalization, $P^{11,(0)} = \mathcal{R}(P^{11})$ is row-stochastic and the auxiliary gaps $\Delta^{(0)}_{\textnormal{PR}}, \Delta^{(0)}_{\textnormal{PPR}}$ become $\epsilon$-independent: $\epsilon$ enters detection only through $\|P^{12}\|_\infty = \epsilon$. The two examples differ only in the internal block $P^{11,(0)}$: Example~A has a well-mixed internal block, while Example~B has a ``starved'' node with zero column in $P^{11,(0)}$.

Throughout this section we write $\Delta^{(0)}_{\textnormal{PR}}$, $\Delta^{(0)}_{\textnormal{PPR}}$ for the auxiliary gaps defined in \eqref{eq:delta0-defs}.

\subsection{Example A: well-mixed internal block}\label{app:2state-mixing}

\paragraph{Setup.} Take
\begin{equation}\label{eq:exA-K}
	P_A \;=\; \begin{pmatrix} 0 & 1-\epsilon & \epsilon \\ 1-\epsilon & 0 & \epsilon \\ \beta & \beta & 1-2\beta \end{pmatrix}.
\end{equation}
Each row of $P_A$ sums to $1$, so $\|P^{12}\|_\infty = \epsilon$. The top-left block $P^{11}_A = (1-\epsilon)\begin{pmatrix}0&1\\1&0\end{pmatrix}$ has each row summing to $1-\epsilon$, so after zeroing $P^{12}$ and renormalizing \arXivtag[the rows of the top block] we obtain the row-stochastic
	\begin{equation}\label{eq:exA-P0}
		P^{(0)}_A \;=\; \begin{pmatrix} 0 & 1 & 0 \\ 1 & 0 & 0 \\ \beta & \beta & 1-2\beta \end{pmatrix},
		\qquad
		P^{11,(0)}_A \;=\; Q \coloneqq \begin{pmatrix} 0 & 1 \\ 1 & 0 \end{pmatrix}.
	\end{equation}
	The internal block $P^{11,(0)}_A$ is the row-stochastic swap matrix, irreducible with stationary distribution $\mathbf{p} = (1/2, 1/2)$.

\paragraph{Resolvent.} A geometric series gives
	\begin{equation}\label{eq:exA-resolvent}
		R_{11} \;\coloneqq\; (I - \alpha Q)^{-1} \;=\; \frac{1}{1-\alpha^2}\begin{pmatrix} 1 & \alpha \\ \alpha & 1 \end{pmatrix}.
\end{equation}

\paragraph{Standard PR.} With $(\mathbf{q}^{\alpha})^\tp = \tfrac{1-\alpha}{2}\mathbf{1}_2^\tp R_{11}$,
	\[
	q_1^\alpha \;=\; q_2^\alpha \;=\; \frac{(1-\alpha)(1+\alpha)}{2(1-\alpha^2)} \;=\; \frac{1}{2}.
	\]
	So $q_{\min}^\alpha = 1/2$, independent of both $\alpha$ and $\epsilon$. With $\eta = 2\beta$, the mixing condition $q_{\min}^\alpha > (\tilde N - N)(1-\alpha)/(N(1-\alpha+\alpha\eta))$ reduces to
	\begin{equation}\label{eq:exA-mixing}
		\tfrac{1}{2} > \tfrac{1-\alpha}{2(1-\alpha+2\alpha\beta)} \;\Longleftrightarrow\; \alpha\beta > 0,
	\end{equation}
	which holds trivially for any $\alpha, \beta > 0$. Renormalization eliminates the original sub-stochastic-form mixing constraint $\epsilon < 2\beta$.

\paragraph{Exact PR gap.} Since $P^{(0)}_A$ is block lower triangular with the right-top block zero, the resolvent $R \coloneqq (I-\alpha P^{(0)}_A)^{-1}$ is block lower triangular with diagonal blocks $R_{11}$ from \eqref{eq:exA-resolvent} and $R_{22} = (1-\alpha(1-2\beta))^{-1}$, and lower-left block $R_{21} = \alpha R_{22}P^{21,(0)}R_{11}$. Writing ${\pi}^{(0),\tp} = \tfrac{1-\alpha}{3}\mathbf{1}^\tp R$ and computing column sums,
	\[
	\pi^{(0)}_1 \;=\; \pi^{(0)}_2 \;=\; \frac{1-\alpha+3\alpha\beta}{3(1-\alpha+2\alpha\beta)}, \qquad \pi^{(0)}_3 \;=\; \frac{1-\alpha}{3(1-\alpha+2\alpha\beta)},
	\]
	from which
	\begin{equation}\label{eq:exA-PR-gap}
		\Delta^{(0)}_{\textnormal{PR}}(\alpha) \;=\; \pi^{(0)}_2 - \pi^{(0)}_3 \;=\; \frac{\alpha\beta}{1-\alpha+2\alpha\beta}.
	\end{equation}
	$\Delta^{(0)}_{\textnormal{PR}}(\alpha) > 0$ for all $\alpha \in (0,1)$ and $\beta > 0$, monotone increasing in $\alpha$ from $0$ to $\beta/(1+2\beta)$.

\paragraph{PPR with seed $\mathcal{S} = \{1\}$.} Reachability holds ($\{1\} \to \{1,2\}$ via the 2-cycle). From \eqref{eq:exA-resolvent}, $e_1^\tp R_{11} = (1, \alpha)/(1-\alpha^2)$, so
	\[
	\bar\pi^{(0)}_1 \;=\; \frac{1}{1+\alpha}, \qquad \bar\pi^{(0)}_2 \;=\; \frac{\alpha}{1+\alpha}, \qquad \bar\pi^{(0)}_3 \;=\; 0.
	\]
	\arXivtag[The minimum of the PPR masses corresponding to the top block] is $\bar\pi^{(0)}_2$, giving
	\begin{equation}\label{eq:exA-PPR-gap}
		\Delta^{(0)}_{\textnormal{PPR}}(\alpha) \;=\; \frac{\alpha}{1+\alpha}.
	\end{equation}
	Both $\Delta^{(0)}_{\textnormal{PR}}$ and $\Delta^{(0)}_{\textnormal{PPR}}$ are $\epsilon$-independent under renormalization: $\epsilon$ enters detection only through $\|P^{12}\|_\infty = \epsilon$, not through the auxiliary gaps on $P^{(0)}$.

\paragraph{Detection windows.} The sufficient condition from Theorem~\ref{thm:detection}---detection on $P$ succeeds whenever $\|P^{12}\|_\infty < \frac{1-\alpha}{4\alpha}\Delta^{(0)}_{\mathbf{s}}$---specializes here with $\|P^{12}\|_\infty = \epsilon$ to $\epsilon < \frac{1-\alpha}{4\alpha}\Delta^{(0)}_{\mathbf{s}}(\alpha)$. With $\beta = 1/2$ for concreteness, $\Delta^{(0)}_{\textnormal{PR}}(\alpha) = \alpha/2$ and $\Delta^{(0)}_{\textnormal{PPR}}(\alpha) = \alpha/(1+\alpha)$. The PR condition becomes $\epsilon < (1-\alpha)/8$, the PPR condition $\epsilon < (1-\alpha)/(4(1+\alpha))$. Solving for $\alpha$ gives detection windows $(0,\alpha^\star)$ with
	\begin{align}
		\alpha^\star_{\textnormal{PR}}(\epsilon) &\;=\; 1-8\epsilon, \label{eq:exA-PR-star}\\
		\alpha^\star_{\textnormal{PPR}}(\epsilon) &\;=\; \frac{1-4\epsilon}{1+4\epsilon}. \label{eq:exA-PPR-star}
	\end{align}
	Window closure ($\alpha^\star = 0$):
	\begin{equation}\label{eq:exA-epsmax}
		\epsilon^{\max}_{\textnormal{PR}} \;=\; 1/8 \;=\; 0.125, \qquad \epsilon^{\max}_{\textnormal{PPR}} \;=\; 1/4 \;=\; 0.250.
	\end{equation}
	The two curves \eqref{eq:exA-PR-star} and \eqref{eq:exA-PPR-star} are plotted in Panel~\arXivtag[(a)] of Figure~\ref{fig:detection-windows}: the blue curve is $\alpha^\star_{\textnormal{PR}}(\epsilon)$ and the red curve is $\alpha^\star_{\textnormal{PPR}}(\epsilon)$.

\subsection{Example B: starved non-mixing internal block}\label{app:2state-starved}

\paragraph{Setup.} Take
\begin{equation}\label{eq:exB-K}
	P_B \;=\; \begin{pmatrix} 1-\epsilon & 0 & \epsilon \\ 1 & 0 & 0 \\ \beta & \beta & 1-2\beta \end{pmatrix}.
\end{equation}
The top-left block $P^{11}_B = \begin{pmatrix}1-\epsilon&0\\1&0\end{pmatrix}$ has row $1$ summing to $1-\epsilon$ and row $2$ summing to $1$. After zeroing $P^{12}$ and renormalizing \arXivtag[the rows of the top block,] we obtain the row-stochastic
	\begin{equation}\label{eq:exB-P0}
		P^{(0)}_B \;=\; \begin{pmatrix} 1 & 0 & 0 \\ 1 & 0 & 0 \\ \beta & \beta & 1-2\beta \end{pmatrix},
		\qquad
		P^{11,(0)}_B \;=\; \begin{pmatrix} 1 & 0 \\ 1 & 0 \end{pmatrix}.
	\end{equation}
	Column $2$ of $P^{11,(0)}_B$ is zero: node $2$ receives no inflow \arXivtag[within top-left block]---a \emph{starved node} in the sense of the discussion after Theorem~\ref{thm:detection}. Row $1$ is a self-loop, and row $2$ jumps deterministically to node $1$. The starved-node property is preserved under renormalization.

\paragraph{Resolvent.} Direct inversion gives
	\begin{equation}\label{eq:exB-resolvent}
		R_{11} \;\coloneqq\; (I - \alpha P^{11,(0)}_B)^{-1} \;=\; \begin{pmatrix} 1/(1-\alpha) & 0 \\ \alpha/(1-\alpha) & 1 \end{pmatrix}.
\end{equation}

\paragraph{Standard PR.} Column sums of \eqref{eq:exB-resolvent} yield
	\[
	q_1^\alpha \;=\; \frac{1+\alpha}{2}, \qquad q_2^\alpha \;=\; \frac{1-\alpha}{2}.
	\]
	So $q_{\min}^\alpha = q_2^\alpha = (1-\alpha)/2$, the pure-teleportation value, again \emph{independent of $\epsilon$}. The mixing condition becomes $1-\alpha+2\alpha\beta > 1 \Leftrightarrow \beta > 1/2$, $\alpha$- and $\epsilon$-independent: no choice of $\alpha$ or reduction of leakage can rescue mixing once $P^{11,(0)}_B$ has a starved node.

\paragraph{Exact PR gap.} Using \eqref{eq:exB-resolvent} and the block lower triangular structure,
	\[
	\pi^{(0)}_1 = \frac{(1+\alpha)(1-\alpha+3\alpha\beta)}{3(1-\alpha+2\alpha\beta)}, \quad \pi^{(0)}_2 = \frac{(1-\alpha)(1-\alpha+3\alpha\beta)}{3(1-\alpha+2\alpha\beta)}, \quad \pi^{(0)}_3 = \frac{1-\alpha}{3(1-\alpha+2\alpha\beta)}.
	\]
	Since $\pi^{(0)}_1/\pi^{(0)}_2 = (1+\alpha)/(1-\alpha) \geq 1$, \arXivtag[the minimum of the first two PR masses] is at the starved node $2$, and the gap is
	\begin{equation}\label{eq:exB-PR-gap}
		\Delta^{(0)}_{\textnormal{PR}}(\alpha) \;=\; \pi^{(0)}_2 - \pi^{(0)}_3 \;=\; \frac{\alpha(1-\alpha)(3\beta-1)}{3(1-\alpha+2\alpha\beta)}.
	\end{equation}
	Positive only if $\beta > 1/3$. Interior max in $\alpha$, vanishing at both endpoints with the $(1-\alpha)$ prefactor.

\paragraph{PPR with seed $\mathcal{S} = \{2\}$.} Seed $\{1\}$ fails reachability (row $1$ of $P^{11,(0)}_B$ is a self-loop), but seed $\{2\}$ reaches $\{1,2\}$. From \eqref{eq:exB-resolvent}, $e_2^\tp R_{11} = (\alpha/(1-\alpha), 1)$, giving
	\[
	\bar\pi^{(0)}_1 \;=\; \alpha, \qquad \bar\pi^{(0)}_2 \;=\; 1-\alpha, \qquad \bar\pi^{(0)}_3 \;=\; 0.
	\]
	The \arXivtag[minimum of $\bar\pi^{(0)}_1$ and $\bar\pi^{(0)}_2$] crosses over at $\alpha_c \coloneqq 1/2$: $\bar\pi^{(0)}_1 \lessgtr \bar\pi^{(0)}_2 \Leftrightarrow \alpha \lessgtr 1/2$. So
	\begin{equation}\label{eq:exB-PPR-gap}
		\Delta^{(0)}_{\textnormal{PPR}}(\alpha) \;=\; \begin{cases} \alpha & \alpha \leq 1/2,\\[4pt] 1-\alpha & \alpha \geq 1/2, \end{cases}
	\end{equation}
	continuous at $\alpha_c$ with value $1/2$ and \emph{$\epsilon$-independent}. Note that $\Delta^{(0)}_{\textnormal{PPR}} > 0$ on all of $(0,1)$ --- reachability delivers a positive gap even though PR's mixing condition is uniformly violated.

\paragraph{Detection windows.} For $\beta = 1/2$, \eqref{eq:exB-PR-gap} simplifies to $\Delta^{(0)}_{\textnormal{PR}}(\alpha) = \alpha(1-\alpha)/6$, so the PR detection condition $\epsilon < (1-\alpha)\Delta^{(0)}_{\textnormal{PR}}/(4\alpha) = (1-\alpha)^2/24$ inverts to
	\begin{equation}\label{eq:exB-PR-star}
		\alpha^\star_{\textnormal{PR}}(\epsilon) \;=\; \max\!\bigl(0,\; 1 - \sqrt{24\epsilon}\bigr), \qquad \epsilon^{\max}_{\textnormal{PR}} = 1/24 \approx 0.0417.
	\end{equation}
	For PPR, the detection condition $\epsilon < (1-\alpha)\Delta^{(0)}_{\textnormal{PPR}}(\alpha)/(4\alpha)$ on each branch is solved separately. On the lower branch ($\alpha \leq 1/2$, $\Delta^{(0)}_{\textnormal{PPR}} = \alpha$), it becomes $\epsilon < (1-\alpha)/4$, i.e.\ $\alpha < 1-4\epsilon$. On the upper branch ($\alpha \geq 1/2$, $\Delta^{(0)}_{\textnormal{PPR}} = 1-\alpha$), it becomes $\epsilon < (1-\alpha)^2/(4\alpha)$, which solves to $\alpha < (1+2\epsilon) - 2\sqrt{\epsilon(1+\epsilon)}$. The two branches meet at $\alpha = 1/2$, corresponding to $\epsilon = 1/8$, so
	\begin{equation}\label{eq:exB-PPR-star}
		\alpha^\star_{\textnormal{PPR}}(\epsilon) \;=\; \begin{cases} (1+2\epsilon) - 2\sqrt{\epsilon(1+\epsilon)} & \epsilon \leq 1/8,\\[4pt] 1-4\epsilon & \epsilon \geq 1/8, \end{cases}
	\end{equation}
	with window closure $\epsilon^{\max}_{\textnormal{PPR}} = 1/4$, attained as $\alpha \to 0$ on the lower branch. The two curves are plotted in Panel~\arXivtag[(b)] of Figure~\ref{fig:detection-windows}, with the branch change at $\epsilon = 1/8$ visible as a slight kink.

\paragraph{Takeaway.} PR tolerates roughly $2\times$ less leakage than PPR in the well-mixed Example~A ($1/8$ vs $1/4$) and $6\times$ less in the starved Example~B ($1/24$ vs $1/4$). In Example~B, PR's detection window collapses by $\alpha \to 0$ at any $\epsilon > 1/24$, while PPR stays open up to $\epsilon = 1/4$; this is the mixing-versus-reachability separation in its starkest form.

\subsection{Remarks on the two examples}\label{app:2state-discuss}

(a) \emph{Role of $\alpha$.} In both examples and both methods, the detection window is an interval $(0,\alpha^\star)$ opening at $\alpha = 0$; no interior-$\alpha$ tuning is required. The $1/\alpha$ factor in the perturbation amplifier $2\alpha/((1-\alpha)r^{(0)}_{\min})$ dominates any interior maximum of the baseline gap, so small $\alpha$ always yields the widest tolerance to leakage. This is consistent with the remark in Section~\ref{subsec:block-theory} that PPR ``does not require $\alpha$ near $1$\arXivtag[.]''

(b) \emph{What the starved example rules out.} Example~B shows that PR's mixing condition is not merely a sufficient abstraction but a genuine obstruction: once $P^{11,(0)}$ has a starved node, $q_{\min}^\alpha$ collapses to $(1-\alpha)/N$ for every $\alpha$, and the mixing condition becomes a condition on $\beta$ alone, independent of both $\alpha$ and $\epsilon$. In contrast, PPR's reachability condition is structural and passes trivially when any seed $v$ has $P^{11,(0)}_{vi} \neq 0$ cumulatively reaching all \arXivtag[$i \in \{1,\ldots, N\}$].

(c) \emph{Size of the PPR advantage.} In the well-mixed example, PPR's maximum tolerable leakage is $\epsilon^{\max}_{\textnormal{PPR}}/\epsilon^{\max}_{\textnormal{PR}} = (1/4)/(1/8) = 2\times$ larger than PR's. In the starved example, the ratio is $(1/4)/(1/24) = 6\times$. The reachability-over-mixing advantage compounds the $\alpha$-decoupling advantage from zero leakage when internal dynamics are non-mixing.

\section{Experimental details}\label{app:experiments}

This appendix provides full experimental setup, dictionary construction, and additional figures for the experiments in Section~\ref{sec:examples}. All relevant codes can be found at \texttt{\anonrepo}. For the Duffing and Van der Pol oscillators, we used bivariate Laguerre polynomials, which are not bounded on $\R^2$, while our finite-sample error bound (Proposition~\ref{prop:finite-sample-edmd}) assumes boundedness of observables (Assumption~\ref{assume:bounded-dict}). Nevertheless, the data distribution $\mu$ is compactly supported on $[-2,2]^2$, so any fixed polynomial is bounded on $\textnormal{supp}(\mu)$; the finite-sample bounds of Theorem~\ref{thm:end-to-end} apply with $D = \sup_{\x \in [-2,2]^2} |\psi_i(\x)|$. Also, we always include the first two Laguerre polynomials, $1-x$ and $1-y$, in sub-dictionaries for the two oscillator examples in order to reconstruct the state variables, $x$ and $y$, and measure state prediction errors. 
For the three-well Ramachandran potential and Lorenz system, we chose the set of state coordinates as the set of seed nodes for PPR.

	\subsection{Quantifying one-step/multi-step state prediction errors}
	For the two oscillator examples, the one-step state prediction error (Figs.~\ref{fig:Duffing-pred-error}a and~\ref{fig:vdp-pred-error}a) is computed as follows. The state variables $x$ and $y$ are recovered from the first two Laguerre basis functions $\psi_1(\mathbf{x}) = 1 - x$ and $\psi_2(\mathbf{x}) = 1 - y$, which are always included in every sub-dictionary.
	Given a sub-dictionary $\Dc_N$ of size $N$, we fit the EDMD matrix $K_{N,M}$ and measure the one-step prediction error by
	\begin{equation}
		\mathcal{E}(N) = \left( \sum_{j=1}^{2} \left\| \Koop\psi_j - \Koop_{N,M}\psi_j \right\|_{L^2(\mu_{\text{test}})}^2 \right)^{1/2},
	\end{equation}
	where $\Koop\psi_j$ is approximated by evaluating $\psi_j$ at the true next state $\mathbf{F}(\mathbf{x})$ on the test data, and $\Koop_{N,M}\psi_j$ is the prediction obtained by applying $\Koop_{N,M}$ to the observable vector at the current state. 
	Note that $\Koop_{N,M}$ is the operator naturally associated with the EDMD matrix $K_{N,M}$, i.e., $\Koop_{N,M}\psi_i \coloneqq \sum_{j=1}^N K_{N,M}[j,i]\psi_j$. 
	Here, $\|\cdot\|_{L^2(\mu_{\text{test}})}$ denotes the empirical $L^2$ norm over $M_{\text{test}} = 2000 $ i.i.d.\ test points drawn from the same distribution as the training data.
	Since the state coordinates are affine functions of $\psi_1$ and $\psi_2$, this error directly measures how accurately the sub-dictionary EDMD matrix propagates the physical state one step forward.
	
	For the three-well Ramachandran potential, the system evolves on the torus $[-\pi,\pi]^2$, so the state variables $\phi$ and $\psi$ are not globally recoverable from a single linear observable. Instead, we embed the state via the four circular coordinates $\textnormal{Coord} \coloneqq \arXivtag[\{\sin\phi,\cos\phi,\sin\psi,\cos\psi\}]$, which are always included in every sub-dictionary and serve as the seed set for PPR. The one-step state prediction error is then
	\begin{equation}
		\mathcal{E}(N) = \left( \sum_{f \in \textnormal{Coord}} \left\| \Koop f - \Koop_{N,M} f \right\|_{L^2(\mu_{\textnormal{test}})}^2 \right)^{1/2}.
	\end{equation}
	Since $(\sin\phi,\cos\phi,\sin\psi,\cos\psi)$ uniquely determines the state on the torus, this error faithfully measures one-step prediction accuracy for the physical state (Fig.~\ref{fig:ala2-errors}a).
	The same error quantification extends naturally to multi-step prediction (Fig.~\ref{fig:ala2-errors}b): starting from a test point, we apply $\Koop$ and $\Koop_{N,M}$ repeatedly to propagate the full observable vector forward, reading off the four circular coordinates at each step and comparing against the true trajectory.
	
	\subsection{Compared methods}\label{app:comp-methods}
	
	For the two oscillator examples, we compare four dictionary ordering methods alongside the tunable symmetric subspace decomposition (TSSD) baseline. \textbf{Unordered} uses a uniformly random permutation of all $\tilde{N}$ observables as a baseline, so that the first $N$ selected observables carry no structural information\arXivtag[, except that the state variables are always included in the sub-dictionary.] \textbf{Incremental} selects the first $N$ observables in lexicographic order of total degree, i.e., the natural ordering of the Laguerre polynomial basis from lowest to highest degree. \textbf{PageRank (PR)} and \textbf{Personalized PageRank (PPR)} apply Algorithm~\ref{algor:PR-EDMD} to the full EDMD matrix $K_{\tilde{N},M}$: the $\tilde{N}$ observables are ranked by their PR or PPR scores, and the top-$N$ are selected to form the sub-dictionary $\Dc_N$. For PPR, the seed set is taken to be the state-coordinate observables. 
	
	\textbf{TSSD}~\cite{haseli2021learning} iteratively prunes the dictionary by retaining only directions that are approximately invariant under both the forward and backward EDMD operators; the tolerance parameter $\epsilon \in [0,1]$ controls how strictly invariance is enforced, with $\epsilon = 0$ recovering exact symmetric subspace decomposition (SSD). We sweep $\epsilon$ over a uniform grid of $[0,1]$ with spacing $0.01$ and report each detected sub-dictionary size as a scatter point in the error plots. Despite the fine grid, TSSD identifies only a small number of distinct sizes: three for the Duffing oscillator ($N = 3, 45, 65$; Fig.~\ref{fig:Duffing-pred-error}a) and seven for the Van der Pol oscillator ($N = 3, 24, 25, 26, 27, 28, 51$; Fig.~\ref{fig:vdp-pred-error}a), reflecting the coarse granularity of the invariance structure detected by the algorithm.

\subsection{Duffing oscillator}\label{app:duffing}

The Duffing oscillator is
\begin{equation}\label{eq:Duffing-model}
	\dot x = y, \quad \dot y =  -\delta y + \gamma x - \beta x^3,
\end{equation}
with $\delta = 0.3$, $\gamma = 1$, $\beta = 1$, discretized at $\Delta t = 0.1$. \arXivtag[We use the] dictionary \arXivtag[consisting] of 90 bivariate Laguerre polynomials of order up to 12\arXivtag with $M = 2000$ data points, which are i.i.d. samples from the \arXivtag[two-dimensional] uniform distribution on $[-2,2]^2$. Fig.~\ref{fig:Duffing-heatmap} shows the EDMD matrix before and after PR-based reordering, with entries rescaled via $a_{ij} \mapsto -1+2/(1+\exp(-a_{ij}))$. \arXivtag
For the one-step prediction error (Fig.~\ref{fig:Duffing-pred-error}a), we use 20 random seeds and depict mean$\pm$1SD for each curve. \arXivtag
For the multi-step state prediction (Fig.~\ref{fig:Duffing-pred-error}b), we use one representative among the 20 random seeds. Nevertheless, the overall tendency remains the same regardless of the random seed value, as evidenced by the small deviation in the one-step prediction error plot in Panel~(a).

\begin{figure}[tbh]
	\centering
	\includegraphics[width=1\linewidth]{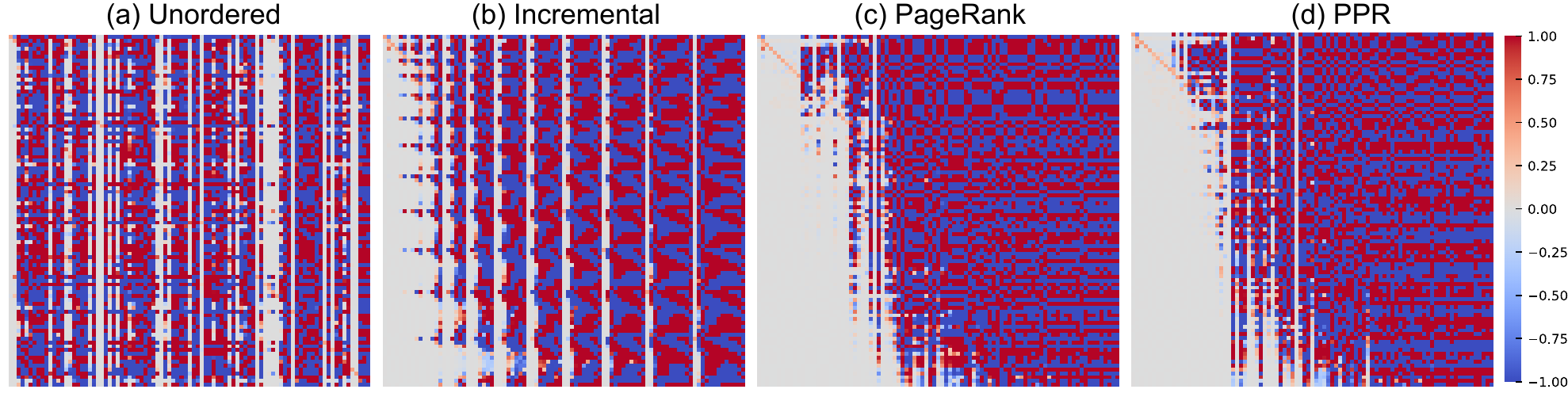}
	\caption{EDMD matrices for the Duffing oscillator with various permutations. (a) Randomly permuted. (b) Lexicographic order. (c) PR-ordered. (d) PPR-ordered.}
	\label{fig:Duffing-heatmap}
\end{figure}

\subsection{Van der Pol oscillator}\label{app:vdp}
\arXivtag[For the] Van der Pol oscillator $\dot x = y$, $\dot y = \mu(1-x^2)y - x$ with $\mu = 1.1$, discretized at $\Delta t = 0.1$, \arXivtag[we use] the same 90 Laguerre polynomials and $M = 2000$ data points, which are i.i.d. samples from the two dimensional uniform distribution on $[-2,2]^2$. \arXivtag[Figs.~\ref{fig:vdp-pred-error} and \ref{fig:vdp-heatmap}] show \arXivtag[the EDMD prediction errors and heatmaps] for the state $(x,y)$. \arXivtag
As done in the Duffing oscillator example above, we use 20 random seeds and depict mean$\pm$1SD for each curve, for the one-step prediction error (Fig.~\ref{fig:vdp-pred-error}a). 
For the multi-step state prediction (Fig.~\ref{fig:vdp-pred-error}b), we use one representative among the 20 random seeds. Nevertheless, the overall tendency remains the same regardless of the random seed value, as evidenced by the small deviation in the one-step prediction error plot in Panel~(a). 

\begin{figure}[tbh]
	\centering
	\includegraphics[width=0.7\linewidth]{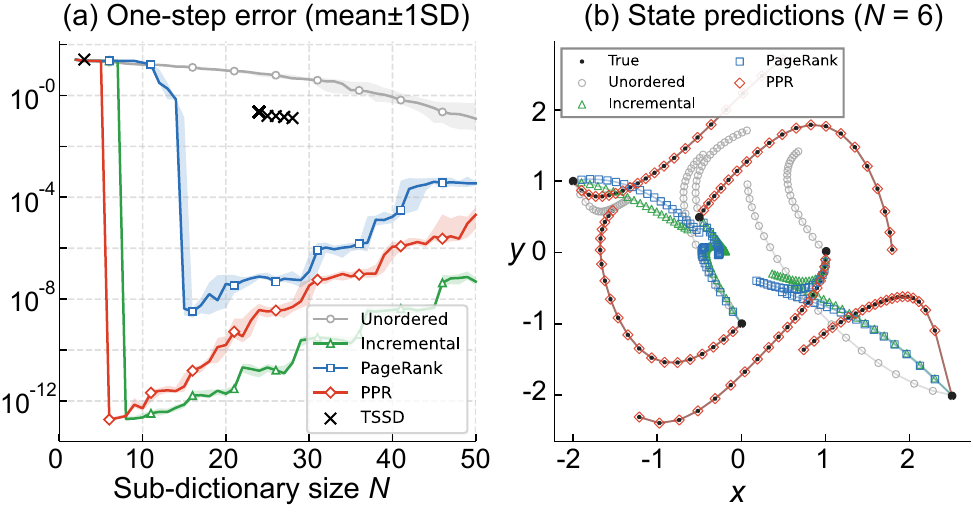}
	\caption{Van der Pol oscillator. (a) One-step errors. The curves represent mean$\pm$1SD over 20 random seeds. 
		The tunable symmetric subspace decomposition (TSSD) detects only 7 sizes: 3, 24, 25, 26, 27, 28 and 51. (b) 20-step state predictions with 6 observables.}
	\label{fig:vdp-pred-error}
\end{figure}

\begin{figure}[tbh]
	\centering
	\includegraphics[width=1\linewidth]{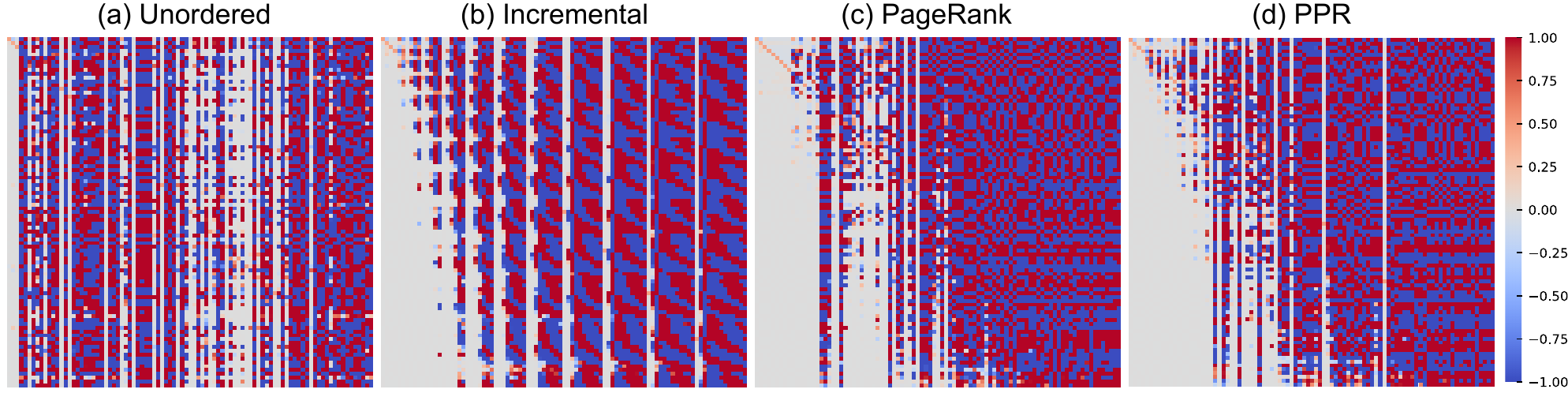}
	\caption{EDMD matrices for the Van der Pol oscillator with various permutations. (a) Randomly permuted. (b) Lexicographic order. (c) PR-ordered. (d) PPR-ordered.}
	\label{fig:vdp-heatmap}
\end{figure}

\subsection{Three-well Ramachandran potential}\label{app:ramachandran}

\paragraph{Potential and dynamics.}
The potential on the torus $[-\pi,\pi]^2$ is
\begin{equation}\label{eq:ramachandran-potential}
	V(\phi,\psi) = \sum_{k=1}^3 A_k \exp\!\left(-\frac{d(\phi,\phi_k^0)^2 + d(\psi,\psi_k^0)^2}{2\sigma_k^2}\right) + 0.3\left(\cos 2\phi + \cos 2\psi\right),
\end{equation}
with geodesic distance $d(\cdot,\cdot)$ on the circle, well centers $(\phi_k^0,\psi_k^0) = (-1.0,-1.0),\, (-1.0,1.2),\, (1.1,-0.3)$, depths $A_k = -6, -6, -4$, widths $\sigma_k = 0.55, 0.55, 0.65$. The dynamics follow the overdamped Langevin equation $d\phi = -\partial_\phi V\,dt + \sqrt{2\beta^{-1}}\,dW_\phi$, $d\psi = -\partial_\psi V\,dt + \sqrt{2\beta^{-1}}\,dW_\psi$ at $\beta = 1.0$, discretized with $\Delta t = 0.005$, producing $M = 100{,}000$ frames. In other words, with 20 different random seeds, we generated 20 stochastic trajectories of the length $100{,}000$. 

\paragraph{Dictionary ($\tilde{N} = 236$).}
(i) state coordinates $\sin\phi, \cos\phi, \sin\psi, \cos\psi$ (4 terms, placed first); (ii) Fourier: $\sin(k\phi)$, $\cos(k\phi)$, $\sin(l\psi)$, $\cos(l\psi)$ for $k,l = 2,\ldots,8$ (28 terms); (iii) cross terms: $\sin(k\phi)\cos(l\psi)$, etc., for $k,l = 1,\ldots,4$ (64 terms); (iv) diagonal Fourier: $\sin(k\phi + l\psi)$, $\cos(k\phi + l\psi)$ for $|k|+|l| \leq 4$ (40 terms); (v) Gaussian RBFs on a $10 \times 10$ grid with $\sigma_{\text{RBF}} = 0.45$ (100 terms).

\paragraph{Comparison methods.}
(a) PR ($\alpha = 0.85$); (b) PPR ($\alpha = 0.85$, multi-seed at the 4 state coordinates); (c) TICA at lag 1, ranking by maximum absolute loading on top-10 slow components; (d) PCCA+ with 50 microstates (k-means), lag 5, 3 macrostates, ranking by Fisher discriminant; (e) Random. All methods rank the same $\tilde{N} = 236$ dictionary; EDMD is recomputed on the selected $N$ observables via least squares. Error is measured on a 20\% held-out test set.

\paragraph{Additional figures.}
Fig.~\ref{fig:ala2-heatmap} shows the EDMD matrix under different orderings: both PR and PPR reveal approximate zero block structure invisible under random, TICA, PCCA+ ordering. These heatmaps show log absolute values for visibility. 
Fig.~\ref{fig:ala2-landscape}a shows that PPR selects all 4 state coordinates at ranks 1--4 followed by basin-boundary RBFs (white dots). Such basin-boundary RBFs seem to effectively capture inter-basin transitions, yielding accurate predictions with smaller sub-dictionaries. Fig.~\ref{fig:ala2-landscape}b,c shows the score distributions: PPR exhibits a sharp two-tier structure (state coordinates receiving $\approx$17\% of total mass, with a factor-of-8 drop to the next tier). Compared with the randomly permuted sub-dictionaries, PPR yielded the largest improvement (Fig.~\ref{fig:ala2-landscape}d). 

\begin{figure}[tbh]
	\centering
	\includegraphics[width=1.0\linewidth]{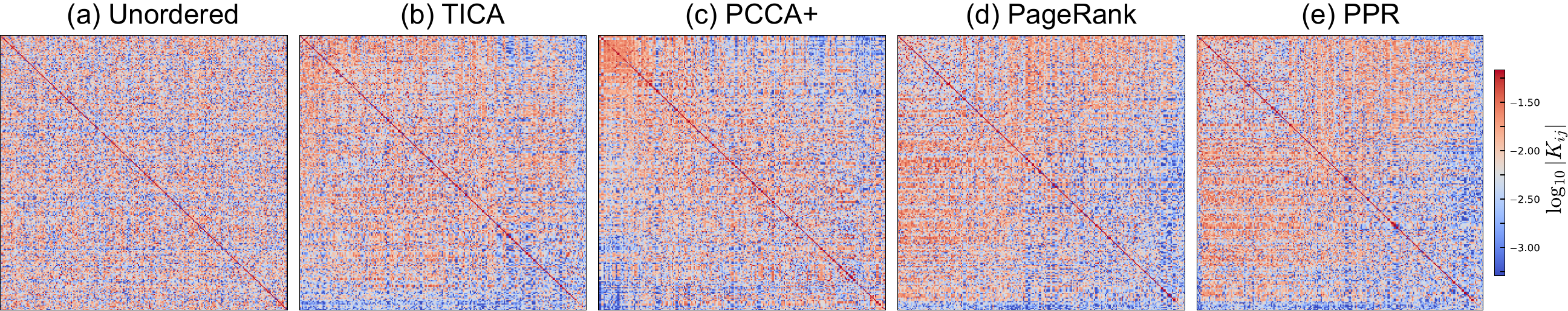}
	\caption{EDMD matrices for the Ramachandran potential with various permutations. (a) Random. (b) TICA. (c) PCCA+ (d) PR. (e) PPR.}
	\label{fig:ala2-heatmap}
\end{figure}

\begin{figure}[tbh]
	\centering
	\includegraphics[width=1.0\linewidth]{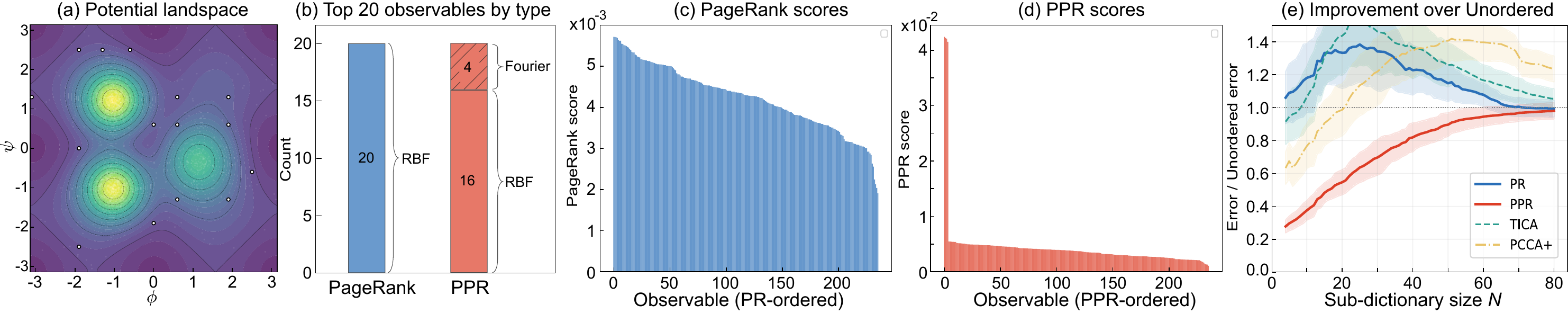}
	\caption{Three-well Ramachandran potential with one representative experiment for illustration. (a) Potential energy landscape. White dots represent the centers of top 15 RBFs detected by PPR. 
		(b) Types of top 20 observables chosen by PageRank and PPR. 
		(c) Standard PR scores. (d) PPR scores: sharp two-tier structure. (e) Improvement ratio vs.\ $N$.}
	\label{fig:ala2-landscape}
\end{figure}

\subsection{Lorenz system}\label{app:lorenz}

\arXivtag[We generate $3 \times 10^5$ data points from a single initial condition at $\Delta t = 0.001$, discard the first $10^5$ as burn-in, and use the remaining $M=2\times 10^5$ data points for EDMD.] The dictionary is a time-delay embedding of the three-dimensional state $(x,y,z)$ with $N_{\textnormal{delay}} = 100$ delay levels and stride $h = 100$ fine steps, yielding $\tilde{N} = 300$ observables and $M = 2 \times 10^5$ snapshot pairs. The observable matrix is 
$$
\mathbf{\Psi}(\mathbf{X}) = \begin{pmatrix}
	\mathbf{x}(0) & \mathbf{x}(h\Delta t) & \cdots & \mathbf{x}((N_{\text{delay}}\!-\!1)h \Delta t) \\
	\mathbf{x}(\Delta t) & \mathbf{x}(\Delta t + h\Delta t) & \cdots & \mathbf{x}(\Delta t + (N_{\text{delay}}\!-\!1)h \Delta t) \\
	\vdots & \vdots & & \vdots \\
	\mathbf{x}((M\!-\!1)\Delta t)& \cdots & \cdots & \mathbf{x}((M\!-\!1)\Delta t 
	+ (N_{\text{delay}}\!-\!1)h \Delta t)
\end{pmatrix} \in \R^{M \by \tilde{N}}.
$$
For the Incremental, PR, and PPR orderings, we reconstruct the EDMD with $N_r \in \{30, 60, 120, 300\}$ and identify the pseudo-eigenfunction at frequency $\log(\lambda)/(i\Delta t) \approx 6\,\textnormal{rad/s}$, consistent with prior work~\cite{arbabi2017ergodic, susuki2015prony, brunton2017chaos}. 

We note that a direct one-step prediction error comparison analogous to Figs.~\ref{fig:Duffing-pred-error} and~\ref{fig:vdp-pred-error} is not presented for the Lorenz system. This is because the time-delay embedding dictionary directly contains the state coordinates as its first-level observables, so the recomputed EDMD matrix on any sub-dictionary that includes the first delay level already achieves accurate one-step state prediction. The eigenfunction comparison of Fig.~\ref{fig:lor-eig} therefore provides a more discriminating quality measure for this dictionary class, probing whether the selected sub-dictionary recovers the correct spectral geometry of the Koopman operator, which is a strictly harder task than one-step prediction accuracy alone.

\subsection{Computational resources}\label{app:compute}

All experiments were conducted on a standard laptop computer equipped with an Apple M5 CPU and 32\,GB of RAM. No GPU acceleration was used. All code is implemented in Python, using \texttt{numpy} for linear algebra and \texttt{numba} for JIT-compiled simulation of the Ramachandran potential; the overdamped Langevin time-stepping loop is compiled to native machine code via LLVM at the first call, with a short warmup run performed before production to amortize the compilation cost. \arXivtag
The total compute time for all experiments reported in the paper is estimated at approximately 30--40 minutes on the hardware described above.

\section{Proofs}\label{sec:proofs}

This appendix contains all proofs. We organize them following the logical structure of the paper: error bounds and zero block structure (Appendix~\ref{sec:proofs-error}), the coupling bound and unconditional leakage guarantee (Appendix~\ref{sec:proofs-leakage}), the PR and PPR lemmas on block-structured matrices (Appendix~\ref{sec:detection_proofs}), and finite-sample extensions (Appendix~\ref{sec:proofs-finite}).

\subsection{Error bounds and zero block structure}\label{sec:proofs-error}

\emph{Goal.} This subsection proves the three results that justify reading sub-dictionary quality off the off-diagonal block of the EDMD matrix: Proposition~\ref{prop:error-bound} (intermediate prediction-error bound in the infinite-dimensional Koopman matrix), Proposition~\ref{prop:error-block} (computable surrogate for the infinite tail), and Theorem~\ref{thm:blockstr} (the ideal case of a Koopman invariant sub-dictionary).

\emph{Strategy.} We proceed in two stages. Proposition~\ref{prop:error-bound} decomposes the EDMD prediction error into an approximation term (projection error within $\Fc_N$) and a statistical term (finite-sample EDMD error); the orthonormal basis structure and Parseval's identity turn the approximation term into $\|K^{21,\text{true}}\|_F^2$. Proposition~\ref{prop:error-block} then replaces this infinite-dimensional quantity with the computable finite block $\|K_{\tilde{N},M}^{21}\|_F^2$ plus a truncation error, so the bound becomes evaluable from data. Finally, Theorem~\ref{thm:blockstr} shows the ideal case: when the sub-dictionary spans a Koopman invariant subspace, the off-diagonal block vanishes identically; the proof exploits the Schur complement structure of the augmented Gram matrix.

The one-step evolution of $\psi_i$ decomposes as in \eqref{eq:psi_evolve_true}. The prediction error is first bounded in terms of the infinite-dimensional Koopman matrix:

\begin{proposition}\label{prop:error-bound}
	Under the same assumptions as in Proposition~\ref{prop:error-block}, for each $\varepsilon > 0$ there exists $M > 0$ such that:
	$\sum_{i=1}^N \|\Koop\psi_i - \Koop_{N,M}\psi_i\|_{L^2}^2 \leq 2\|K^{21, \arXivtag[\textnormal{true}]}\|_F^2 + \varepsilon$.
\end{proposition}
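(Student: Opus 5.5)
The plan is to split the error with the triangle inequality, using $(a+b)^2\le 2a^2+2b^2$ to produce the factor $2$. For each $i\le N$, insert the Galerkin projection $\mathcal{P}_{\Fc_N}\Koop\psi_i$:
\[
\norm{\Koop\psi_i-\Koop_{N,M}\psi_i}_{L^2}^2 \le 2\norm{\Koop\psi_i-\mathcal{P}_{\Fc_N}\Koop\psi_i}_{L^2}^2 + 2\norm{\mathcal{P}_{\Fc_N}\Koop\psi_i-\Koop_{N,M}\psi_i}_{L^2}^2 .
\]
The first term is the approximation error and the second is the statistical error. Summing over $i$ gives the claimed bound once the first sum equals $\norm{K^{21,\text{true}}}_F^2$ and the second sum is at most $\varepsilon/2$ for $M$ large.

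For the approximation term, the dictionary $\Dc$ is an orthonormal basis of $\Fc$ with $\Dc_N$ as its first $N$ elements. Hence $\mathcal{P}_{\Fc_N}\Koop\psi_i=\mathbf{\Psi}_N K^{11,\text{true}}\mathbf{e}_i$, and the residual is exactly $\mathbf{\Psi}_{N^\mathsf{c}}K^{21,\text{true}}\mathbf{e}_i$ by \eqref{eq:psi_evolve_true}. Parseval's identity gives $\norm{\Koop\psi_i-\mathcal{P}_{\Fc_N}\Koop\psi_i}^2=\sum_{j>N}|K^{\text{true}}[j,i]|^2$. Summing over $i\le N$ yields $\norm{K^{21,\text{true}}}_F^2$, which is finite because $\Koop$ is bounded and each column is square-summable.

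For the statistical term, both $\mathcal{P}_{\Fc_N}\Koop\psi_i$ and $\Koop_{N,M}\psi_i$ lie in $\Fc_N$. Orthonormality of $\psi_1,\ldots,\psi_N$ then gives
\[
\sum_{i\le N}\norm{\mathcal{P}_{\Fc_N}\Koop\psi_i-\Koop_{N,M}\psi_i}^2=\norm{K^{11,\text{true}}-K_{N,M}}_F^2 .
\]
The population EDMD matrix is $K_N=G^{-1}A$ with $G=I$, so $K_N[j,i]=\langle\Koop\psi_i,\psi_j\rangle=K^{11,\text{true}}[j,i]$. By the strong law of large numbers applied to the empirical Gram matrices $\frac1M\mathbf{\Psi}(\X)^\ast\mathbf{\Psi}(\X)\to I$ and $\frac1M\mathbf{\Psi}(\X)^\ast\mathbf{\Psi}(\Y)\to A$, together with continuity of matrix inversion near $I$ under Assumption~\ref{assume:full-rank}, we get $K_{N,M}\to K_N$ almost surely, following the convergence of \cite{korda2018convergence}. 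Thus there is an $M$ beyond which $\norm{K^{11,\text{true}}-K_{N,M}}_F^2\le\varepsilon/2$.

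The main obstacle is making the ``there exists $M$'' rigorous. The convergence is almost sure rather than deterministic, so the statement should be read as holding for almost every sample sequence, or with high probability via Proposition~\ref{prop:finite-sample-edmd} under Assumption~\ref{assume:bounded-dict}. The step also needs finite second moments of $\psi_i\overline{\psi_j}$ and $\Koop\psi_i\overline{\psi_j}$ for the law of large numbers; this follows from the integrability of these products of $L^2$ functions. Everything else is a direct Parseval computation.
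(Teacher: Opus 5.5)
Your proposal is correct and follows essentially the same route as the paper: the same split through $\Pc_{\Fc_N}\Koop|_{\Fc_N}$ with $(a+b)^2\le 2a^2+2b^2$, Parseval to identify the approximation term with $\norm{K^{21,\textnormal{true}}}_F^2$, and EDMD convergence for the statistical term, which the paper simply cites as Theorem~2 of \cite{korda2018convergence} instead of re-deriving it via the strong law. Your remark that the resulting $M$ depends on the sample sequence, because the convergence is only almost sure, is a fair sharpening of something the paper leaves implicit. One small correction: the strong law needs only finite first moments, which Cauchy--Schwarz gives for products of $L^2$ functions; it does not need second moments, and those would not follow from integrability anyway.
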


Since $K^{21, \text{true}}$ is not computable from data, Proposition~\ref{prop:error-block} reformulates the bound in terms of the finite EDMD matrix: the first term $\|K_{\tilde{N},M}^{21}\|_F^2$ is then computable from data, and the second is a truncation error decreasing in $\tilde{N}$.

\begin{proof}[Proof of Proposition~\ref{prop:error-bound}]
	We first derive the error term for a single $\psi_i$ and then the one for the summation over $i=1, \ldots, N$. For any $i \in \{1,\ldots,N\}$, we have
	\begin{equation}\label{eq:proof-single-i}
		\norm{\Koop \psi_i - \Koop_{N,M}\,\psi_i}_{L^2}^2 
		\leq
		2\left(\norm{\Koop \psi_i - \Koop_{N}\,\psi_i}_{L^2}^2  + \norm{\Koop_{N} \psi_i - \Koop_{N,M}\,\psi_i}_{L^2}^2\right),
	\end{equation}
	where $\Koop_N$ denotes $\Pc_{\Fc_N}\Koop_{|\Fc_N}$ and $\Pc_{\Fc_N} : \Fc \to \Fc_N$ is the projection operator onto $\Fc_N$. Note that this projection is well defined as $\Fc_N$ is a finite-dimensional subspace of $\Fc = L^2(\Mc, \mu)$ and thus a closed subspace. 
	Under Assumption~\ref{assume:full-rank},
	Theorem 2 in \cite{korda2018convergence} implies that the following holds:
	\begin{equation*}
		\lim_{M\to\infty}\norm{\Koop_{N} \psi_i - \Koop_{N,M}\,\psi_i}_{L^2} = 0.
	\end{equation*}
	
	Therefore, there exists $M>0$ such that 
	\begin{equation*}
		\norm{\Koop_{N} \psi_i - \Koop_{N,M}\,\psi_i}_{L^2} < \sqrt{\frac{\varepsilon}{2N}},
	\end{equation*}
	and thus 
	\begin{equation}\label{eq:proof-eps-N}
		\norm{\Koop_{N} \psi_i - \Koop_{N,M}\,\psi_i}_{L^2}^2 < \frac{\varepsilon}{2N}.
	\end{equation}
	From \eqref{eq:psi_evolve_true}, the term, $\norm{\Koop \psi_i - \Koop_{N}\,\psi_i}_{L^2}^2$, in \eqref{eq:proof-single-i} can be expressed as follows:
	\begin{align*}
		\norm{\Koop \psi_i - \Koop_{N}\,\psi_i}_{L^2}^2 & = \norm{\mathbf{\Psi}\,K^{\text{true}}\mathbf{e}_i - \mathbf{\Psi}_N\,K^{11,\text{true}}\mathbf{e}_i}_{L^2}^2 \\
		& = \norm{\mathbf{\Psi}_{N^\mathsf{c}}\,K^{21,\text{true}}\mathbf{e}_i}_{L^2}^2,
	\end{align*}
	which is identical to
	$\norm{K^{21,\text{true}}\mathbf{e}_i}_{\ell^2}^2$
	by Parseval's identity for the orthonormal set.
	
	Combining this with \eqref{eq:proof-single-i} and \eqref{eq:proof-eps-N}, we have the following inequality for each $i \in \{1,\ldots, N\}$: 
	\begin{align*}
		\norm{\Koop \psi_i - \Koop_{N,M}\,\psi_i}_{L^2}^2 
		&  \leq 
		2\left(\norm{\Koop \psi_i - \Koop_{N}\,\psi_i}_{L^2}^2  + \norm{\Koop_{N} \psi_i - \Koop_{N,M}\,\psi_i}_{L^2}^2\right)
		\\
		& \leq 2 \left( \norm{K^{21,\text{true}}\mathbf{e}_i}_{\ell^2}^2 + \frac{\varepsilon}{2N}\right).
	\end{align*}
	
	Taking the sum over $i=1,\ldots,N$, we obtain:
	\begin{equation*}
		\sum_{i=1}^N \norm{\Koop \psi_i - \Koop_{N,M}\,\psi_i}_{L^2}^2
		\leq 2\norm{K^{21,\text{true}}}_{F}^2+ \varepsilon,
	\end{equation*}
	which completes the proof. 
\end{proof}

The next step splits $K^{21,\text{true}}$ (which has infinitely many rows) into a finite computable part $K_{\tilde{N},M}^{21}$ and a tail that decreases with $\tilde{N}$, then uses EDMD convergence to relate the finite part to data.

\begin{proof}[Proof of Proposition~\ref{prop:error-block}]
	Given Proposition~\ref{prop:error-bound}, it is sufficient to show
	\begin{equation*}
		\norm{K^{21,\text{true}}}_{F}^2
		\leq
		\norm{ K_{\tilde{N},M}^{21} }_F^2 + \norm{ K^{\text{true}}[(\tilde{N}+1):\infty,\, 1:\tilde{N}] }_F^2 + \frac{\varepsilon}{2}
	\end{equation*}
	for a large $M>0$. By definition,
	\begin{equation*}
		\norm{K^{21,\text{true}}}_{F}^2 = \norm{ K^{\text{true}}[(N+1):\tilde{N},\, 1:N] }_F^2
		+
		\norm{ K^{\text{true}}[(\tilde{N}+1):\infty,\, 1:N] }_F^2,
	\end{equation*}
	and
	\begin{equation*}
		\norm{ K^{\text{true}}[(\tilde{N}+1):\infty,\, 1:N] }_F^2
		\leq
		\norm{ K^{\text{true}}[(\tilde{N}+1):\infty,\, 1:\tilde{N}] }_F^2
	\end{equation*}
	since the left-hand side sums over columns $j = 1,\ldots,N$ while the right-hand side sums over the larger set of columns $j = 1,\ldots,\tilde{N}$. Moreover,
	\begin{equation*}
		\norm{ K_{\tilde{N},M}^{21} }_F^2 \to \norm{ K^{\text{true}}[(N+1):\tilde{N},\, 1:N] }_F^2 \quad \text{as} \quad M \to \infty
	\end{equation*}
	since $\Koop_{\tilde{N},M}$ converges to the projection of $\Koop$ onto $\Fc_{\tilde{N}}$ (see Theorem 2 in \cite{korda2018convergence}) and the matrix $K_{\tilde{N},M}^{21}$ contains only finitely many terms.
	Therefore, there exists $M > 0$ such that
	\begin{equation*}
		\norm{ K^{\text{true}}[(N+1):\tilde{N},\, 1:N] }_F^2
		\leq
		\norm{ K_{\tilde{N},M}^{21} }_F^2  + \frac{\varepsilon}{2},
	\end{equation*}
	which completes the proof.
\end{proof}

Theorem~\ref{thm:blockstr} is the exact-case companion: when $\Fc_N$ is $\Koop$-invariant, the off-diagonal block vanishes identically. The proof uses the block structure of the augmented evaluation matrices $\mathbf{\Psi}_{\tilde{N}}(\X),\mathbf{\Psi}_{\tilde{N}}(\Y) \in \mathbb{C}^{M\times\tilde N}$, combined with the Schur complement formula for block matrix inversion. The crucial step is substituting the invariance relation $\mathbf{\Psi}_N(\X)\,K_{N,M} = \mathbf{\Psi}_N(\Y)$ (which holds exactly when $\Fc_N$ is $\Koop$-invariant) into the block product; the off-diagonal block $K_{\tilde N,M}^{21}$ then collapses to zero through an algebraic cancellation involving the Schur complement $W = A - BD^{-1}C$.

\begin{proof}[Proof of Theorem~\ref{thm:blockstr}]
	Let $\{\x^{(1)}, \ldots, \x^{(M)}\}$ be i.i.d.\ samples from $\mu$ and $\y^{(i)} = \mathbf{F}(\x^{(i)})$. \arXivtag[With the initial dictionary $\Dc_{\tilde N}=\{\psi_1, \ldots, \psi_{\tilde N}\}$ and the sub-dictionary $\Dc_{N}=\{\psi_1, \ldots, \psi_N\}$], the EDMD matrices\arXivtag are
	\begin{align}
		K_{\tilde N, M} & = \mathbf{\Psi}_{\tilde N}(\X)^{+}\,\mathbf{\Psi}_{\tilde N}(\Y) \;=\; \bigl(\mathbf{\Psi}_{\tilde N}(\X)^{*}\mathbf{\Psi}_{\tilde N}(\X)\bigr)^{-1}\mathbf{\Psi}_{\tilde N}(\X)^{*}\mathbf{\Psi}_{\tilde N}(\Y), \label{eq:EDMDmat-tildeN} \\
        K_{N, M} & = \mathbf{\Psi}_{N}(\X)^{+}\,\mathbf{\Psi}_{N}(\Y) \;=\; \bigl(\mathbf{\Psi}_{N}(\X)^{*}\mathbf{\Psi}_{N}(\X)\bigr)^{-1}\mathbf{\Psi}_{N}(\X)^{*}\mathbf{\Psi}_{N}(\Y). \label{eq:EDMDmat-N}
	\end{align}\arXivtag
	Partition the columns of the augmented evaluation matrices into the first $N$ (i.e.\ $\Dc_N$) and the remaining $\tilde N - N$ (i.e.\ $\Dc_{N_2} := \Dc_{\tilde N}\setminus\Dc_N$):
	\begin{equation*}
		\mathbf{\Psi}_{\tilde N}(\X) = \begin{bmatrix} \mathbf{\Psi}_{N}(\X) & \arXivtag[\mathbf{\Psi}_{N_2}](\X) \end{bmatrix}
		\quad\text{and}\quad
		\mathbf{\Psi}_{\tilde N}(\Y) = \begin{bmatrix} \mathbf{\Psi}_{N}(\Y) & \arXivtag[\mathbf{\Psi}_{N_2}](\Y) \end{bmatrix},
	\end{equation*}
	with $\mathbf{\Psi}_{N}(\X),\mathbf{\Psi}_{N}(\Y) \in \mathbb{C}^{M\times N}$ and $\arXivtag[\mathbf{\Psi}_{N_2}](\X),\arXivtag[\mathbf{\Psi}_{N_2}](\Y) \in \mathbb{C}^{M\times(\tilde N - N)}$. With this partition, \eqref{eq:EDMDmat-tildeN} reads
	\begin{align}
		K_{\tilde N, M}
		& = \begin{bmatrix}
			\mathbf{\Psi}_{N}(\X)^{*}\mathbf{\Psi}_{N}(\X) & \mathbf{\Psi}_{N}(\X)^{*}\arXivtag[\mathbf{\Psi}_{N_2}](\X) \\
			\arXivtag[\mathbf{\Psi}_{N_2}](\X)^{*}\mathbf{\Psi}_{N}(\X) & \arXivtag[\mathbf{\Psi}_{N_2}](\X)^{*}\arXivtag[\mathbf{\Psi}_{N_2}](\X)
		\end{bmatrix}^{-1}
		\begin{bmatrix}
			\mathbf{\Psi}_{N}(\X)^{*}\mathbf{\Psi}_{N}(\Y) & \mathbf{\Psi}_{N}(\X)^{*}\arXivtag[\mathbf{\Psi}_{N_2}](\Y) \\
			\arXivtag[\mathbf{\Psi}_{N_2}](\X)^{*}\mathbf{\Psi}_{N}(\Y) & \arXivtag[\mathbf{\Psi}_{N_2}](\X)^{*}\arXivtag[\mathbf{\Psi}_{N_2}](\Y)
		\end{bmatrix}.
		\label{eq:KtildeN-block-3}
	\end{align}
	To further simplify \eqref{eq:KtildeN-block-3}, we use the block matrix inversion formula
	\begin{equation}\label{eq:matrix-inversion}
		\begin{bmatrix}
			A & B \\
			C & D
		\end{bmatrix}^{-1}
		=
		\begin{bmatrix}
			W^{-1} & -W^{-1}BD^{-1}\\
			- D^{-1}C W^{-1} & D^{-1}+D^{-1}C W^{-1}BD^{-1}
		\end{bmatrix},
	\end{equation}
	valid whenever $D$ is invertible and $W\coloneqq A -BD^{-1}C$ (\arXivtag[i.e.,] the Schur complement of $D$) is invertible.
	Applying \eqref{eq:matrix-inversion} with $A=\mathbf{\Psi}_{N}(\X)^{*}\mathbf{\Psi}_{N}(\X)$, $B = \mathbf{\Psi}_{N}(\X)^{*}\arXivtag[\mathbf{\Psi}_{N_2}](\X)$, $C = \arXivtag[\mathbf{\Psi}_{N_2}](\X)^{*}\mathbf{\Psi}_{N}(\X)$, $D = \arXivtag[\mathbf{\Psi}_{N_2}](\X)^{*}\arXivtag[\mathbf{\Psi}_{N_2}](\X)$, we rewrite \eqref{eq:KtildeN-block-3} as
	\begin{equation}\label{eq:KtildeN-block-withW}
		K_{\tilde N, M} =
		\begin{bmatrix}
			W^{-1} & -W^{-1}BD^{-1}\\
			- D^{-1}C W^{-1} & D^{-1}+D^{-1}C W^{-1}BD^{-1}
		\end{bmatrix}
		\begin{bmatrix}
			\mathbf{\Psi}_{N}(\X)^{*}\mathbf{\Psi}_{N}(\Y) & \mathbf{\Psi}_{N}(\X)^{*}\arXivtag[\mathbf{\Psi}_{N_2}](\Y) \\
			\arXivtag[\mathbf{\Psi}_{N_2}](\X)^{*}\mathbf{\Psi}_{N}(\Y) & \arXivtag[\mathbf{\Psi}_{N_2}](\X)^{*}\arXivtag[\mathbf{\Psi}_{N_2}](\Y)
		\end{bmatrix}.
	\end{equation}
	By Assumption~\ref{assume:full-rank}, the Gram matrix $\mathbf{\Psi}_{\tilde N}(\X)^{*}\mathbf{\Psi}_{\tilde N}(\X)$ and its block $D$ are invertible, so the Schur complement $W$ is also invertible; hence \eqref{eq:KtildeN-block-withW} is valid.\arXivtag
	
	\emph{Part (\textit{i}).} From \eqref{eq:KtildeN-block-withW}, the top-left $N\by N$ block of $K_{\tilde N, M}$ equals
	\begin{align}
		& W^{-1}\mathbf{\Psi}_{N}(\X)^{*}\mathbf{\Psi}_{N}(\Y) - W^{-1}BD^{-1}\arXivtag[\mathbf{\Psi}_{N_2}](\X)^{*}\mathbf{\Psi}_{N}(\Y) \notag \\
		& \qquad = W^{-1}\bigl\{\mathbf{\Psi}_{N}(\X)^{*} - BD^{-1}\arXivtag[\mathbf{\Psi}_{N_2}](\X)^{*}\bigr\}\mathbf{\Psi}_{N}(\Y) \label{eq:KtildeN-top-left1} \\
		& \qquad = W^{-1}\bigl\{\mathbf{\Psi}_{N}(\X)^{*} - BD^{-1}\arXivtag[\mathbf{\Psi}_{N_2}](\X)^{*}\bigr\}\mathbf{\Psi}_{N}(\X)\,K_{N,M} \label{eq:KtildeN-top-left2} \\
		& \qquad = W^{-1}\bigl\{\mathbf{\Psi}_{N}(\X)^{*}\mathbf{\Psi}_{N}(\X) - BD^{-1}\arXivtag[\mathbf{\Psi}_{N_2}](\X)^{*}\mathbf{\Psi}_{N}(\X)\bigr\}K_{N,M} \notag \\
		& \qquad = W^{-1}\bigl\{A - BD^{-1}C\bigr\}K_{N,M} \;=\; W^{-1}\cdot W\cdot K_{N,M} \;=\; K_{N,M}. \notag
	\end{align}
	From \eqref{eq:KtildeN-top-left1} to \eqref{eq:KtildeN-top-left2} we used the invariance of $\Fc_N$, namely $\mathbf{\Psi}_{N}(\Y) = \mathbf{\Psi}_{N}(\X)\,K_{N,M}$, as in \eqref{eq:KpsiX-psiY}.
	
	\emph{Part (\textit{ii}).} Again from \eqref{eq:KtildeN-block-withW}, the bottom-left $(\tilde N - N)\by N$ block of $K_{\tilde N, M}$ equals
	\begin{align*}
		& - D^{-1}CW^{-1}\mathbf{\Psi}_{N}(\X)^{*}\mathbf{\Psi}_{N}(\Y) + \bigl(D^{-1} + D^{-1}CW^{-1}BD^{-1}\bigr)\arXivtag[\mathbf{\Psi}_{N_2}](\X)^{*}\mathbf{\Psi}_{N}(\Y) \\
		& \qquad = -D^{-1}CW^{-1}\mathbf{\Psi}_{N}(\X)^{*}\mathbf{\Psi}_{N}(\X)\,K_{N,M} + D^{-1}\arXivtag[\mathbf{\Psi}_{N_2}](\X)^{*}\mathbf{\Psi}_{N}(\X)\,K_{N,M} \\
		& \qquad\phantom{AAAAAAAAAAAAAA} + D^{-1}CW^{-1}BD^{-1}\arXivtag[\mathbf{\Psi}_{N_2}](\X)^{*}\mathbf{\Psi}_{N}(\X)\,K_{N,M} \\
		& \qquad = -D^{-1}CW^{-1}A\,K_{N,M} + D^{-1}C\,K_{N,M} + D^{-1}CW^{-1}BD^{-1}C\,K_{N,M} \\
		& \qquad = D^{-1}C\,W^{-1}\bigl(-A + W + BD^{-1}C\bigr)\,K_{N,M} \\
		& \qquad = D^{-1}C\,W^{-1}\cdot O_{N\by N}\cdot K_{N,M} \;=\; O_{(\tilde N - N)\times N}.
	\end{align*}
	This completes the proof.
\end{proof}

\begin{remark}[Remark for the proof]
	It is noteworthy that the data set to obtain the EDMD matrix $K_{\tilde N,M}$ does not need to be the same as the one for $K_{N,M}$. Since $\Fc_N$ is invariant, $K_{N,M}$ is identical to the matrix form (i.e., $K_N$) of the projection of the restricted Koopman operator, $\Koop_N$, under Assumption~\ref{assume:full-rank}.  Therefore, $K_{N,M} = K_N$ has no data dependency, and this proof can be simply reproduced with different data sets for the two EDMD matrices, $K_{\tilde N,M}$ and $K_{N,M}$.
\end{remark}

\subsection{Coupling bound and unconditional leakage}\label{sec:proofs-leakage}

\emph{Goal.} This subsection proves the unconditional leakage guarantee (Proposition~\ref{prop:unconditional}), which bounds multi-step Koopman leakage by the PPR mass outside the selected set.

\emph{Strategy.} The bridge between multi-step Koopman dynamics and the PPR Neumann series is a single entrywise domination result (Lemma~\ref{lem:coupling-bound}) showing that the entries of $K_{\tilde{N}}^k$ are controlled by those of the row-normalized matrix $P^k$ up to a factor $r_{\max}^k$. Given this control, the discounting rates align via the change of variables $\gamma = \alpha/r_{\max}$, which converts the Koopman-side geometric sum into the PPR Neumann series and reveals $(1-\gamma)/(1-\alpha)\cdot(1 - \pi_{\mathbf{s}}(S_N))$ as the leakage bound.

\begin{lemma}[Entrywise coupling to a row-normalized matrix]\label{lem:coupling-bound}
	Let $A$ be a square matrix with absolute row sums $r_i := \sum_l |A[i,l]| > 0$, row normalization $\hat A[i,j] := |A[i,j]|/r_i$, and $r_{\max} := \max_i r_i$. Let $|A|$ denote the entrywise modulus of $A$. Then for all $v, j$ and $k \geq 0$:
	\begin{equation}\label{eq:coupling-bound}
		|A^k[v,j]| \leq |A|^k[v,j] \leq r_{\max}^k\,\hat A^k[v,j].
	\end{equation}
\end{lemma}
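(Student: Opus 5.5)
The plan is to prove the two inequalities in \eqref{eq:coupling-bound} separately, each by induction on $k$. The base case $k=0$ is trivial, since $A^0=|A|^0=\hat A^0=I$ and $r_{\max}^0=1$.

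\emph{First inequality.} This is the standard fact that the modulus of a product is dominated entrywise by the product of moduli. Assume $|A^k[v,j]|\le |A|^k[v,j]$ for all $v,j$. We write
\[
A^{k+1}[v,j]=\sum_l A^k[v,l]\,A[l,j].
\]
The triangle inequality then gives
\[
|A^{k+1}[v,j]|\le \sum_l |A^k[v,l]|\,|A[l,j]|\le \sum_l |A|^k[v,l]\,|A[l,j]| = |A|^{k+1}[v,j].
\]
The second step uses the induction hypothesis together with the nonnegativity of $|A[l,j]|$.

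\emph{Second inequality.} The key observation is the factorization $|A| = D_r \hat A$, where $D_r=\operatorname{diag}(r_1,\dots,r_n)$. This holds because $\hat A[i,j]=|A[i,j]|/r_i$, and $r_i>0$ by hypothesis. All three of $D_r$, $\hat A$ and $|A|$ are entrywise nonnegative. Moreover $D_r\le r_{\max} I$ entrywise, so $|A|\le r_{\max}\hat A$ entrywise.

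Entrywise inequalities between nonnegative matrices are preserved under products: if $0\le X\le X'$ and $0\le Y\le Y'$ entrywise, then $XY\le X'Y'$. Applying this inductively gives
\[
|A|^{k+1}=|A|^k|A|\le (r_{\max}^k\hat A^k)(r_{\max}\hat A)=r_{\max}^{k+1}\hat A^{k+1}.
\]
Alternatively, one can expand $|A|^k[v,j]$ as a sum over paths $v=i_0\to i_1\to\cdots\to i_k=j$ of $\prod_t |A[i_{t-1},i_t]| = \prod_t r_{i_{t-1}}\hat A[i_{t-1},i_t]$, and then bound each $r_{i_{t-1}}$ by $r_{\max}$.

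No step is a genuine obstacle. The only point needing care is that nonnegativity is used essentially in the monotonicity of products: both $|A|$ and $\hat A$ are nonnegative, so bounding a factor from above bounds the product from above. The hypothesis $r_i>0$ is needed only so that $\hat A$ is well defined. Chaining the two inequalities yields \eqref{eq:coupling-bound}.
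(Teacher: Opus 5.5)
Your proposal is correct and follows essentially the same route as the paper. The paper also proves the second inequality by induction on $k$, using $|A[l,j]| = r_l\hat A[l,j] \le r_{\max}\hat A[l,j]$, which is the entrywise form of your factorization $|A| = D_r\hat A$. The only difference is in the first inequality: the paper expands the $k$-fold product directly rather than inducting, which is immaterial.
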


\begin{proof}
	The first inequality is the entrywise triangle inequality applied to the $k$-fold product:
	\begin{equation*}
		|A^k[v,j]| = \Bigl|\sum_{l_1,\ldots,l_{k-1}} A[v,l_1]\cdots A[l_{k-1},j]\Bigr| \leq \sum_{l_1,\ldots,l_{k-1}} |A[v,l_1]|\cdots |A[l_{k-1},j]| = |A|^k[v,j].
	\end{equation*}
	We prove the second inequality by induction on $k$. The base case $k=0$ is straightforward. For $k=1$: $|A|[v,j] = r_v\,\hat A[v,j] \leq r_{\max}\,\hat A[v,j]$, using $|A[v,j]| = r_v\hat A[v,j]$ by definition of $\hat A$. For the inductive step $k \to k+1$:
	\begin{align*}
		|A|^{k+1}[v,j] &= \sum_l |A|^k[v,l]\, |A[l,j]| \\
		&\leq \sum_l r_{\max}^k\,\hat A^k[v,l] \cdot r_l\,\hat A[l,j] \leq r_{\max}^{k+1} \sum_l \hat A^k[v,l]\,\hat A[l,j] = r_{\max}^{k+1}\,\hat A^{k+1}[v,j],
	\end{align*}
	where the inequality uses the inductive hypothesis $|A|^k[v,l] \leq r_{\max}^k \hat A^k[v,l]$, the identity $|A[l,j]| = r_l\hat A[l,j]$, and $r_l \leq r_{\max}$.
\end{proof}

\begin{proposition}[Multi-step leakage and PPR]\label{prop:unconditional}
	Let $P = \mathcal{R}(|K_{\tilde{N},M}^{\tp}|)$, $r_{\max,M} = \|K_{\tilde{N},M}^{\tp}\|_\infty$, and $S_N \subset \{1,\ldots,\tilde{N}\}$ with $|S_N| = N$. Assume $\alpha < r_{\max,M}$ and set $\gamma := \alpha/r_{\max,M} \in (0,1)$ (when $r_{\max,M} \geq 1$ this holds for every $\alpha \in (0,1)$; when $r_{\max,M} < 1$, the damping factor must satisfy $\alpha < r_{\max,M}$). For any preference vector $\mathbf{s}$ supported on $S_N$, the \emph{$\gamma$-discounted multi-step leakage} defined by
	\begin{equation}\label{eq:leakage-ppr}
		\Lambda_{\mathbf{s}}^{\gamma}(S_N)\coloneqq(1-\gamma)\sum_{k=1}^{\infty} \gamma^k \sum_{j \notin S_N} [\mathbf{s}^\tp  |K_{\tilde{N},M}^{\tp}|^k]_j
	\end{equation}
	satisfies
	$$
	\Lambda_{\mathbf{s}}^{\gamma}(S_N) \leq \frac{1-\gamma}{1-\alpha} \left(1 - \pi_{\mathbf{s}, M}(S_N)\right),
	$$
	where $\pi_{\mathbf{s}, M}$ is the PPR vector of $P$ with preference $\mathbf{s}$ and damping $\alpha$.
\end{proposition}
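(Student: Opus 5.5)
The plan is to apply Lemma~\ref{lem:coupling-bound} with $A = K_{\tilde{N},M}^{\tp}$, so that $\hat A = P$ and $r_{\max} = r_{\max,M}$. The lemma gives the entrywise bound $|K_{\tilde{N},M}^{\tp}|^k[v,j] \leq r_{\max,M}^k P^k[v,j]$ for every $k\geq 0$. Since $\mathbf{s}\geq 0$, we can contract with $\mathbf{s}$ and sum over $j\notin S_N$:
\[
\sum_{j\notin S_N}[\mathbf{s}^\tp |K_{\tilde{N},M}^{\tp}|^k]_j \leq r_{\max,M}^k \sum_{j\notin S_N}[\mathbf{s}^\tp P^k]_j .
\]
Multiplying by $\gamma^k$ and using $\gamma r_{\max,M} = \alpha$ gives
\[
\Lambda_{\mathbf{s}}^{\gamma}(S_N) \leq (1-\gamma)\sum_{k=1}^\infty \alpha^k \sum_{j\notin S_N}[\mathbf{s}^\tp P^k]_j .
\]

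Next I identify this discounted sum with the PPR mass. The Neumann series representation of the PPR vector is $\pi_{\mathbf{s},M}^\tp = (1-\alpha)\sum_{k\geq 0}\alpha^k \mathbf{s}^\tp P^k$, which converges absolutely with nonnegative terms. The $k=0$ term is $\mathbf{s}^\tp$, which puts zero mass outside $S_N$ because $\mathbf{s}$ is supported on $S_N$. Therefore
\[
\sum_{k=1}^\infty \alpha^k \sum_{j\notin S_N}[\mathbf{s}^\tp P^k]_j = \sum_{k=0}^\infty \alpha^k \sum_{j\notin S_N}[\mathbf{s}^\tp P^k]_j = \frac{1}{1-\alpha}\,\pi_{\mathbf{s},M}\bigl(S_N^{\mathsf c}\bigr),
\]
where interchanging the sums is justified by Tonelli, since all terms are nonnegative. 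Finally, $\pi_{\mathbf{s},M}$ is a probability vector, so $\pi_{\mathbf{s},M}(S_N^{\mathsf c}) = 1-\pi_{\mathbf{s},M}(S_N)$. Combining the displays yields the claimed bound $\frac{1-\gamma}{1-\alpha}\bigl(1-\pi_{\mathbf{s},M}(S_N)\bigr)$.

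The remaining work is bookkeeping rather than a real obstacle. First, Lemma~\ref{lem:coupling-bound} needs every row sum of $K_{\tilde{N},M}^{\tp}$ to be positive. Zero rows are discarded in Step~2 of Algorithm~\ref{algor:PR-EDMD}, so I would state that the indices are restricted to the retained set, or note that zero rows contribute nothing to either side. Second, the hypothesis $\gamma<1$ is what makes the left-hand side $\Lambda_{\mathbf{s}}^\gamma$ a meaningful discounted quantity. The convergence itself comes from the comparison: the right-hand side is finite because $\alpha<1$, which is the reason for aligning the discount rates through $\gamma = \alpha/r_{\max,M}$.
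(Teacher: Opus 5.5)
Your proposal is correct and follows the paper's own argument. Lemma~\ref{lem:coupling-bound} with $A = K_{\tilde{N},M}^{\tp}$ gives domination by $r_{\max,M}^k P^k$, the choice $\gamma = \alpha/r_{\max,M}$ turns the discounted sum into the PPR Neumann series, and the vanishing $k=0$ term off $S_N$ identifies $(1-\alpha)$ times that series with $1-\pi_{\mathbf{s},M}(S_N)$; the only difference is cosmetic, since the paper works seed-by-seed with $\pi_{v,M}$ and recombines by linearity in $\mathbf{s}$, whereas you contract with $\mathbf{s}$ directly.
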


\begin{proof}
	Write $P_v^{(k)} = \sum_{j \notin S_N} P^k[v,j]$. For $v \in S_N$, we have $P_v^{(0)} = 0$. Apply Lemma~\ref{lem:coupling-bound} with $A = K_{\tilde{N},M}^\tp$; then $\hat A = \mathcal{R}(|K_{\tilde{N},M}^\tp|) = P$, and equation~\eqref{eq:coupling-bound} gives $|K_{\tilde{N},M}^\tp|^k[v,j] \leq r_{\max,M}^k\,P^k[v,j]$, so
	\begin{equation*}
		\sum_{j \notin S_N} |K_{\tilde{N},M}^\tp|^k[v,j] \leq r_{\max,M}^k P_v^{(k)}.
	\end{equation*}
	Expanding the definition of $\Lambda_{\mathbf{s}}^\gamma(S_N) = (1-\gamma)\sum_{k=1}^\infty \gamma^k \sum_{j \notin S_N}[\mathbf{s}^\tp |K_{\tilde{N},M}^\tp|^k]_j$ and using $[\mathbf{s}^\tp|K_{\tilde{N},M}^\tp|^k]_j = \sum_v s_v |K_{\tilde{N},M}^\tp|^k[v,j]$,
	\begin{equation*}
		\Lambda_{\mathbf{s}}^\gamma(S_N) \leq \sum_v s_v \cdot (1-\gamma)\sum_{k=1}^\infty \gamma^k r_{\max,M}^k\,P_v^{(k)}.
	\end{equation*}
	Setting $\gamma = \alpha/r_{\max,M}$ gives $\gamma^k r_{\max,M}^k = \alpha^k$, so
	\begin{equation*}
		\Lambda_{\mathbf{s}}^\gamma(S_N) \leq \sum_v s_v \cdot (1-\gamma)\sum_{k=1}^\infty \alpha^k P_v^{(k)} = \sum_v s_v \cdot \frac{1-\gamma}{1-\alpha}(1 - \pi_{v,M}(S_N)),
	\end{equation*}
	where the last equality uses the single-seed PPR Neumann series \eqref{eq:ppr-singleseed}: $(1-\alpha)\sum_{k=1}^\infty \alpha^k P_v^{(k)} = 1 - \pi_{v,M}(S_N)$. By linearity of $\mathbf{s} \mapsto \pi_{\mathbf{s}, M}$, $\sum_v s_v\,\pi_{v,M}(S_N) = \pi_{\mathbf{s}, M}(S_N)$, giving
	\begin{equation*}
		\Lambda_{\mathbf{s}}^\gamma(S_N) \leq \frac{1-\gamma}{1-\alpha}\left(1 - \pi_{\mathbf{s}, M}(S_N)\right). \qedhere
	\end{equation*}
\end{proof}

\subsection{Proof of Theorem \ref{thm:detection}}\label{sec:detection_proofs}

\emph{Goal.} This subsection proves Theorem~\ref{thm:detection}. The main-text sketch (\S\ref{sec:lemmas}) reduces detection on $P$ to (a) a perturbation bound between $P$ and $P^{(0)}$ and (b) explicit lower bounds on the auxiliary gaps $\Delta^{(0)}_{\textnormal{PR}}$, $\Delta^{(0)}_{\textnormal{PPR}}$ via the closed-form expressions of Lemma~\ref{lem:gaps-closed-form} (only this lemma is stated in the main text).

\emph{Strategy.} We first prove Lemma~\ref{lem:gaps-closed-form} (\S\ref{sec:proof-gaps-closed-form}) by computing $(I-\alpha P^{(0)})^{-1}$ via the block-lower-triangular structure of $P^{(0)}$. We then derive the explicit lower bounds: on $\Delta^{(0)}_{\textnormal{PR}}$ (Lemma~\ref{lem:pagerank-combined}, by dropping $s^{\textnormal{eff}} \geq 1$ to $1$ and bounding $\|R_{22}\|_\infty$ via the coupling parameter $\eta$), and on $\Delta^{(0)}_{\textnormal{PPR}}$ (Lemma~\ref{lem:ppr-combined}, by reading off the closed form). Next, the shared perturbation bound (Lemma~\ref{lem:pr-ppr-perturbation}) follows from a generic resolvent perturbation lemma (Lemma~\ref{lem:pagerank-perturbation})\arXivtag. Finally, an abstract detection criterion (Lemma~\ref{lem:abstract-detection}) composes these ingredients to yield Theorem~\ref{thm:detection}. The row-normalization stability lemma (Lemma~\ref{lem:row-norm-perturbation}) is also stated here but is used only in the finite-sample proof of Theorem~\ref{thm:end-to-end}.

\subsubsection{Closed-form detection gaps}\label{sec:proof-gaps-closed-form}

We begin with the closed-form detection gaps of Lemma~\ref{lem:gaps-closed-form}. The proof proceeds in three steps: a block-lower-triangular form of the resolvent $(I-\alpha P^{(0)})^{-1}$ (Step~1), from which the standard PR (Step~2) and multi-seed PPR (Step~3) closed forms follow by direct computation.

\begin{proof}[Proof of Lemma~\ref{lem:gaps-closed-form}]
	\emph{Step 1: block form of the resolvent.} Since $P^{(0)}$ has zero right-top block, an induction on $k$ shows that $(P^{(0)})^k$ inherits the same zero block for every $k \geq 0$, with top-left block $(P^{11,(0)})^k$ and bottom-right block $(P^{22,(0)})^k$: if $(P^{(0)})^k = \begin{bmatrix}(P^{11,(0)})^k & O \\ C_k & (P^{22,(0)})^k\end{bmatrix}$, then
	\begin{equation*}
		(P^{(0)})^{k+1} = P^{(0)}(P^{(0)})^k = \begin{bmatrix}(P^{11,(0)})^{k+1} & O \\ P^{21,(0)}(P^{11,(0)})^k + P^{22,(0)}C_k & (P^{22,(0)})^{k+1}\end{bmatrix},
	\end{equation*}
	with $C_0 = O$. Summing the Neumann series $(I - \alpha P^{(0)})^{-1} = \sum_{k\geq 0}\alpha^k(P^{(0)})^k$ yields
	\begin{equation}\label{eq:resolvent-block}
		(I - \alpha P^{(0)})^{-1} = \begin{bmatrix} R_{11} & O \\ \alpha R_{22}P^{21,(0)}R_{11} & R_{22}\end{bmatrix},
	\end{equation}
	where $R_{11} := (I - \alpha P^{11,(0)})^{-1}$ and $R_{22} := (I - \alpha P^{22,(0)})^{-1}$. The lower-left block follows either from the block-inverse identity or from $\sum_{k\geq 0}\alpha^k C_k$ via the recursion above. Since $P^{11,(0)}, P^{22,(0)}$ are entrywise nonnegative with $\|P^{11,(0)}\|_\infty = 1$ (row-stochastic) and $\|P^{22,(0)}\|_\infty \leq 1$ and $\alpha < 1$, each Neumann series converges absolutely; hence $R_{11}, R_{22}$ are entrywise nonnegative with finite entries.
	
	\emph{Step 2: standard PR closed form.} Writing ${\pi}^{(0),\tp} = \tfrac{1-\alpha}{\tilde{N}}\mathbf{1}^\tp(I-\alpha P^{(0)})^{-1}$ and splitting $\mathbf{1}^\tp = [\mathbf{1}_N^\tp\;\;\mathbf{1}_{\tilde N - N}^\tp]$, equation~\eqref{eq:resolvent-block} gives, for $i \leq N$,
	\begin{equation}\label{eq:pi0-block1-formula}
		\pi_i^{(0)} \;=\; \frac{1-\alpha}{\tilde{N}}\bigl[\mathbf{1}_N^\tp R_{11} + \alpha\,\mathbf{1}_{\tilde N - N}^\tp R_{22}P^{21,(0)}R_{11}\bigr]_i \;=\; \frac{1-\alpha}{\tilde{N}}\sum_{v \leq N} s_v^{\textnormal{eff}}\,[R_{11}]_{vi},
	\end{equation}
	where $s_v^{\textnormal{eff}} := 1 + \alpha\sum_{w > N}[R_{22}P^{21,(0)}]_{wv}$. Since $R_{22},P^{21,(0)} \geq 0$ entrywise, $s_v^{\textnormal{eff}} \geq 1$ for every $v \leq N$. For $j > N$, the top-right block of the resolvent vanishes, so only the $\mathbf{1}_{\tilde N - N}^\tp R_{22}$ contribution survives:
	\begin{equation}\label{eq:pi0-block2-formula}
		\pi_j^{(0)} \;=\; \frac{1-\alpha}{\tilde{N}}\sum_{w > N}[R_{22}]_{wj}.
	\end{equation}
	Taking $\min_{i \leq N}$ of \eqref{eq:pi0-block1-formula} and $\max_{j > N}$ of \eqref{eq:pi0-block2-formula} and subtracting yields the PR closed form in \eqref{eq:delta-closed}.

		\emph{Step 3: multi-seed PPR closed form and vanishing leakage.} For $v \leq N$, define the single seed PPR vector $(\pi_v^{(0)})^\tp := (1-\alpha)\,\mathbf{e}_v^\tp(I-\alpha P^{(0)})^{-1}$. Because the top-right block of the resolvent \eqref{eq:resolvent-block} vanishes, $\pi_v^{(0)}(j) = 0$ for all $j > N$, and $\pi_v^{(0)}(i) = (1-\alpha)[R_{11}]_{vi}$ for $i \leq N$. Averaging over $v \in \mathcal{S} \subset \{1,\ldots,N\}$,
		\begin{equation}\label{eq:ppr-bar-formula}
			\pi_{\mathcal{S}}^{(0)}(j) = 0 \quad (j > N), \qquad \pi_{\mathcal{S}}^{(0)}(i) = \frac{1-\alpha}{|\mathcal{S}|}\sum_{v \in \mathcal{S}}[R_{11}]_{vi} \quad (i \leq N).
		\end{equation}
		Hence the max-leakage term in \eqref{eq:delta0-defs} vanishes identically, and $\Delta^{(0)}_{\textnormal{PPR}} = \min_{i\leq N}\pi_{\mathcal{S}}^{(0)}(i)$ takes the form displayed in \eqref{eq:delta-closed}.

\end{proof}

The next two lemmas specialize the closed-form expressions \arXivtag[\eqref{eq:pi0-block1-formula}--\eqref{eq:ppr-bar-formula}] of Lemma~\ref{lem:gaps-closed-form} into forms used by the detection theorem: the relaxed PR baseline that replaces $s_v^{\textnormal{eff}} \geq 1$ by $1$ and bounds $\|R_{22}\|_\infty$ via the coupling parameter $\eta$, and the multi-seed PPR baseline that reads off the vanishing-leakage form directly.

\begin{lemma}[Standard PR baseline]\label{lem:pagerank-combined}
	Let $\alpha \in (0,1)$ be a damping factor. 
	Let $(\mathbf{q}^\alpha)^\tp := \tfrac{1-\alpha}{N}\mathbf{1}_N^\tp(I-\alpha P^{11,(0)})^{-1}$ and $q_{\min}^\alpha := \min_{i\leq N}q_i^\alpha$. Then
	\begin{align}
		\max_{j > N}\pi_j^{(0)} \;&\leq\; \frac{(1-\alpha)(\tilde{N}-N)}{\tilde{N}(1-\alpha+\alpha\eta)}, \label{eq:PR-max-leakage}\\
		\min_{i \leq N}\pi_i^{(0)} \;&\geq\; \frac{N}{\tilde{N}}\,q_{\min}^\alpha, \label{eq:PR-min-block1}
	\end{align}
	and consequently
	\begin{equation}\label{eq:PR-gap-lemma}
		\Delta^{(0)}_{\textnormal{PR}} \;\geq\; \frac{N}{\tilde{N}}\,q_{\min}^\alpha \;-\; \frac{(1-\alpha)(\tilde{N}-N)}{\tilde{N}(1-\alpha+\alpha\eta)},
	\end{equation}
	positive whenever $q_{\min}^\alpha > (\tilde{N}-N)(1-\alpha)/\bigl(N(1-\alpha+\alpha\eta)\bigr)$.
\end{lemma}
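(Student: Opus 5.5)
The plan is to start from the closed-form expressions \eqref{eq:pi0-block1-formula} and \eqref{eq:pi0-block2-formula} established in the proof of Lemma~\ref{lem:gaps-closed-form}. Each of the two inequalities will be bounded separately, and subtracting them will give \eqref{eq:PR-gap-lemma}.

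\emph{Lower bound \eqref{eq:PR-min-block1}.} For $i\leq N$, formula \eqref{eq:pi0-block1-formula} gives $\pi_i^{(0)}=\frac{1-\alpha}{\tilde N}\sum_{v\leq N}s_v^{\textnormal{eff}}[R_{11}]_{vi}$. Since $R_{11}\geq 0$ entrywise and $s_v^{\textnormal{eff}}\geq 1$, dropping $s_v^{\textnormal{eff}}$ to $1$ yields $\pi_i^{(0)}\geq \frac{1-\alpha}{\tilde N}[\mathbf{1}_N^\tp R_{11}]_i$. This equals $\frac{N}{\tilde N}q_i^\alpha$ by the definition of $\mathbf{q}^\alpha$. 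Taking the minimum over $i$ gives the bound.

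\emph{Upper bound \eqref{eq:PR-max-leakage}.} For $j>N$, formula \eqref{eq:pi0-block2-formula} gives $\pi_j^{(0)}=\frac{1-\alpha}{\tilde N}\sum_{w>N}[R_{22}]_{wj}$, which is a column sum of $R_{22}$. I bound each entry by the corresponding row sum: since $R_{22}\geq 0$, we have $[R_{22}]_{wj}\leq \sum_{l}[R_{22}]_{wl}\leq \|R_{22}\|_\infty$. Summing over the $\tilde N-N$ indices $w>N$ then gives
\[
\pi_j^{(0)}\leq \frac{(1-\alpha)(\tilde N-N)}{\tilde N}\,\|R_{22}\|_\infty .
\]
Next I bound the resolvent using the Neumann series and submultiplicativity. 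With $P^{22,(0)}=P^{22}$ and $\|P^{22}\|_\infty=1-\eta$,
\[
\|R_{22}\|_\infty\leq\sum_k\alpha^k(1-\eta)^k=\frac{1}{1-\alpha(1-\eta)}=\frac{1}{1-\alpha+\alpha\eta}.
\]
This requires $\alpha(1-\eta)<1$, which holds because $\alpha<1$ and $\eta>0$. Taking the maximum over $j$ gives \eqref{eq:PR-max-leakage}.

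\emph{Combining.} Subtracting the two bounds inside the definition \eqref{eq:delta0-defs} gives \eqref{eq:PR-gap-lemma}. Multiplying through by $\tilde N/N$ shows that the right-hand side is positive exactly under the stated mixing condition.

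The only step needing care is the column-versus-row-sum bound for $R_{22}$. It is a crude step, bounding each entry by a full row sum, but it suffices and matches the stated constant. No real obstacle is expected beyond confirming $P^{22,(0)}=P^{22}$ and the nonnegativity already provided by Lemma~\ref{lem:gaps-closed-form}.
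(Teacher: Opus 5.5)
Your proposal is correct and matches the paper's proof essentially step for step: you drop $s_v^{\textnormal{eff}}\geq 1$ to $1$ for the lower bound, and bound $R_{22}$ via its Neumann series with $\|P^{22,(0)}\|_\infty = 1-\eta$ for the upper bound. The only cosmetic difference is that the paper first bounds the total mass $\sum_{j>N}\pi^{(0)}_j$ and then uses nonnegativity to pass to the maximum. You instead bound each column sum directly by the total entry sum of $R_{22}$, which is the same estimate and gives the same constant.
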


\begin{proof}
	From \eqref{eq:pi0-block1-formula} and $s_v^{\textnormal{eff}} \geq 1$,
	\[
	\pi_i^{(0)} \;\geq\; \frac{1-\alpha}{\tilde N}\bigl[\mathbf{1}_N^\tp R_{11}\bigr]_i \;=\; \frac{N}{\tilde N}\,q_i^\alpha,
	\]
	using $(\mathbf{q}^{\alpha})^\tp = \tfrac{1-\alpha}{N}\mathbf{1}_N^\tp R_{11}$; taking $\min_{i\leq N}$ gives \eqref{eq:PR-min-block1}. Since $R_{11} = \sum_{k \geq 0}\alpha^k(P^{11,(0)})^k$ contains the identity term, $[\mathbf{1}_N^\tp R_{11}]_i \geq 1$, so $q_{\min}^\alpha \geq (1-\alpha)/N$.
	
	For the \arXivtag[bottom-block] upper bound, from \eqref{eq:pi0-block2-formula} and $R_{22} = (I-\alpha P^{22,(0)})^{-1} = \sum_{k\geq 0}\alpha^k(P^{22,(0)})^k \geq 0$ entrywise,
		\[
		\pi_j^{(0)} = \frac{1-\alpha}{\tilde{N}}\sum_{w>N}[R_{22}]_{wj} \geq 0,
		\]
        \arXivtag[where row and column indices of $R_{22}$ start at $N+1$, consistent with the full $\tilde{N}\times\tilde{N}$ indexing. Specifically, $R_{22}$ has the size of $(\tilde N - N) \by (\tilde N - N)$ so its row and column indices are $N+1, N+2, \ldots, \tilde{N}$.]
		Summing over $j > N$ and using $\sum_j [R_{22}]_{wj} \leq \frac{1}{1-\alpha\|P^{22,(0)}\|_\infty} \leq \frac{1}{1-\alpha(1-\eta)}$
		(since $\|P^{22,(0)}\|_\infty \leq 1-\eta$ and $R_{22} = \sum_{k\geq 0}\alpha^k(P^{22,(0)})^k$):
		\[
		\|{\pi}_2^{(0)}\|_1 = \sum_{j>N}\pi_j^{(0)} \leq \frac{(1-\alpha)(\tilde{N}-N)}{\tilde{N}(1-\alpha+\alpha\eta)},
		\]
		which implies \eqref{eq:PR-max-leakage} since $\pi_j^{(0)} \geq 0$. Subtracting \eqref{eq:PR-max-leakage} from \eqref{eq:PR-min-block1} gives \eqref{eq:PR-gap-lemma}.
	
	As $\alpha \to 1$, \eqref{eq:PR-max-leakage} gives $\max_{j>N}\pi_j^{(0)} \to 0$, and from \eqref{eq:PR-min-block1} we get $\Delta^{(0)}_{\textnormal{PR}} \geq (N/\tilde{N})q^\alpha_{\min} \to (N/\tilde{N})\lim_{\alpha\to 1}q^\alpha_{\min}$. Note that $q^\alpha_{\min} \geq (1-\alpha)/N \to 0$, so this does not immediately give a positive lower bound; the bound is non-vacuous only when $q^\alpha_{\min}$ decays slower than $1-\alpha$, which occurs when $P^{11,(0)}$ is ``well-mixed'' (Theorem~\ref{thm:detection}(i)).
\end{proof}

\begin{lemma}[PPR baseline]\label{lem:ppr-combined}
	For $v \leq N$, define the PPR vector $({\pi}_v^{(0)})^\tp := (1-\alpha)\,\mathbf{e}_v^\tp(I-\alpha P^{(0)})^{-1}$ with a single seed $v$,
		and for any seed set $\mathcal{S} \subset \{1,\ldots,N\}$ define
		${\pi}_{\mathcal{S}}^{(0)} := \frac{1}{|\mathcal{S}|}\sum_{v \in \mathcal{S}}{\pi}_v^{(0)}$.
	Then $\pi_{\mathcal{S}}^{(0)}(j) = 0$ for all $j > N$, and
	\begin{equation}\label{eq:PPR-gap-lemma}
		\Delta^{(0)}_{\textnormal{PPR}} \;=\; \min_{i \leq N}\pi_{\mathcal{S}}^{(0)}(i) \;=\; \min_{i \leq N}\frac{1-\alpha}{|\mathcal{S}|}\sum_{v \in \mathcal{S}}\bigl[(I - \alpha P^{11,(0)})^{-1}\bigr]_{vi}.
	\end{equation}
\end{lemma}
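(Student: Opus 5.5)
The plan is to read this lemma off the block-lower-triangular resolvent \eqref{eq:resolvent-block} from Step~1 of the proof of Lemma~\ref{lem:gaps-closed-form}; essentially no new computation is needed. For a single seed $v\leq N$ we have $(\pi_v^{(0)})^\tp = (1-\alpha)\,\mathbf{e}_v^\tp (I-\alpha P^{(0)})^{-1}$. This is the $v$-th row of the resolvent scaled by $(1-\alpha)$. Since $v$ indexes the top block, that row is $\bigl[\,[R_{11}]_{v,\cdot}\;\; O\,\bigr]$, because the top-right block of \eqref{eq:resolvent-block} is zero. Hence $\pi_v^{(0)}(j)=0$ for every $j>N$, and $\pi_v^{(0)}(i)=(1-\alpha)[R_{11}]_{vi}$ for $i\leq N$, where $R_{11}=(I-\alpha P^{11,(0)})^{-1}$.

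Next I will average over $v\in\mathcal{S}\subset\{1,\ldots,N\}$. By linearity of $\mathbf{s}\mapsto\pi_{\mathbf{s}}$, recalled in Section~\ref{sec:algorithm}, this gives \eqref{eq:ppr-bar-formula}. The bottom-block entries of $\pi^{(0)}_{\mathcal{S}}$ vanish because every seed lies in the top block. The top-block entries equal $\frac{1-\alpha}{|\mathcal{S}|}\sum_{v\in\mathcal{S}}[R_{11}]_{vi}$.

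Finally I plug these into the definition \eqref{eq:delta0-defs}. Because all entries of $\pi_{\mathcal{S}}^{(0)}$ with $j>N$ are zero, the term $\max_{j>N}\pi^{(0)}_{\mathcal{S}}(j)$ equals $0$. Therefore $\Delta^{(0)}_{\textnormal{PPR}}=\min_{i\leq N}\pi^{(0)}_{\mathcal{S}}(i)$, which is exactly \eqref{eq:PPR-gap-lemma}.

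The only point needing care is that $(I-\alpha P^{(0)})^{-1}$ really has the block form \eqref{eq:resolvent-block}, with the Neumann series converging. This was already established in Step~1 from $\|P^{(0)}\|_\infty=1$, $\alpha<1$, and the zero top-right block being preserved under powers. I will cite it rather than redo it. I will also remark that entrywise nonnegativity of $R_{11}$ makes the expression a genuine nonnegative gap, and that it is strictly positive exactly under the reachability condition of Theorem~\ref{thm:detection}(ii), since $[R_{11}]_{vi}\geq\alpha^k[(P^{11,(0)})^k]_{vi}>0$ for some $k$ whenever $v$ reaches $i$.
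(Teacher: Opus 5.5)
Your proposal is correct and follows the paper's own proof: the paper likewise reads the single-seed vectors off the zero top-right block of the block-lower-triangular resolvent \eqref{eq:resolvent-block} (Step~3 of the proof of Lemma~\ref{lem:gaps-closed-form}), averages over $\mathcal{S}\subset\{1,\ldots,N\}$ to get \eqref{eq:ppr-bar-formula}, and substitutes into \eqref{eq:delta0-defs}. Your closing remark on nonnegativity of $R_{11}$ and on positivity under reachability is also correct; it matches the discussion following Lemma~\ref{lem:gaps-closed-form} and is not needed for the lemma itself.
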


\begin{proof}
	This is Step~3 of the proof of Lemma~\ref{lem:gaps-closed-form}: formulas \eqref{eq:ppr-bar-formula} read off directly, and substituting into $\Delta^{(0)}_{\textnormal{PPR}} = \min_{i \leq N}\pi_{\mathcal{S}}^{(0)}(i) - \max_{j > N}\pi_{\mathcal{S}}^{(0)}(j)$ gives \eqref{eq:PPR-gap-lemma}.
\end{proof}

\subsubsection{Perturbation bounds}\label{sec:proofs-perturbation}

The shared perturbation bound (Lemma~\ref{lem:pr-ppr-perturbation}) follows directly from a resolvent perturbation bound (Lemma~\ref{lem:pagerank-perturbation} below): since $P^{(0)} = \mathcal{R}(|K^{(0)}|^\tp)$ is obtained from $P$ by zeroing $P^{12}$ \emph{and renormalizing \arXivtag[the rows of the top block]}, a direct calculation (in the proof) gives $\|P - P^{(0)}\|_\infty = 2\|P^{12}\|_\infty$, and Lemma~\ref{lem:pagerank-perturbation} then gives the bound with sensitivity constant $2\alpha/(1-\alpha)$. The row-normalization stability lemma (Lemma~\ref{lem:row-norm-perturbation} below) is used in the finite-sample proof of Theorem~\ref{thm:end-to-end} to bound $\|\mathcal{R}(|K_{\tilde{N},M}^\tp|) - \mathcal{R}(|K_{\tilde{N}}^\tp|)\|_\infty$. Both lemmas are independent of the preference vector, so they cover PR and PPR uniformly.

We begin with row-normalization stability. The row normalization $\mathcal{R}(A)$ divides each entry by its absolute row sum, introducing a denominator that couples all entries in a row; perturbations affect both the numerator (individual entries) and the denominator (absolute row sum), leading to the factor of $2$.

\begin{lemma}[Stability of row normalization]\label{lem:row-norm-perturbation}
	Let $A, B \in \R^{n \times n}$ have all nonzero rows, and let $r^A_{\min} \coloneqq \min_i \sum_j |A[i,j]|$. Then
	$\|\mathcal{R}(A) - \mathcal{R}(B)\|_\infty \leq \frac{2}{r^A_{\min}} \|A - B\|_\infty$.
\end{lemma}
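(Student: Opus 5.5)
The proof goes row by row, since $\|\cdot\|_\infty$ is the maximum absolute row sum. Fix a row $i$ and write $a_j = A[i,j]$, $b_j = B[i,j]$, with absolute row sums $r^A_i = \sum_j |a_j| \ge r^A_{\min}$ and $r^B_i = \sum_j |b_j| > 0$. The rows of $\mathcal{R}(A)$ and $\mathcal{R}(B)$ are $|a_j|/r^A_i$ and $|b_j|/r^B_i$, as in the algorithm. We must bound $\sum_j \bigl| |a_j|/r^A_i - |b_j|/r^B_i \bigr|$ by $2\sum_j |a_j - b_j|/r^A_{\min}$. Taking the maximum over $i$ then finishes the proof.

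The key step is to insert the intermediate term $|b_j|/r^A_i$ and use the triangle inequality:
\[
\Bigl|\tfrac{|a_j|}{r^A_i} - \tfrac{|b_j|}{r^B_i}\Bigr| \le \tfrac{\bigl||a_j| - |b_j|\bigr|}{r^A_i} + |b_j|\,\Bigl|\tfrac{1}{r^A_i} - \tfrac{1}{r^B_i}\Bigr|.
\]
Summing over $j$, the first term is at most $\sum_j |a_j - b_j|/r^A_i$ by the reverse triangle inequality.

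The second term sums to
\[
r^B_i \cdot \frac{|r^B_i - r^A_i|}{r^A_i r^B_i} = \frac{|r^A_i - r^B_i|}{r^A_i}.
\]
Again by the reverse triangle inequality, $|r^A_i - r^B_i| \le \sum_j \bigl||a_j| - |b_j|\bigr| \le \sum_j |a_j - b_j|$. The row-$i$ deviation is therefore at most $2\sum_j |a_j-b_j|/r^A_i \le 2\|A-B\|_\infty/r^A_{\min}$.

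There is no serious obstacle; the one point needing care is the choice of intermediate term. Normalizing $b$ by $r^A_i$, rather than $a$ by $r^B_i$, makes the denominator involve only $r^A$. This is why the constant depends on $r^A_{\min}$ alone and needs no lower bound on the rows of $B$ beyond their being nonzero, which is required only for $\mathcal{R}(B)$ to be defined.
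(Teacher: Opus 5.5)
Your proof is correct and follows essentially the same route as the paper. Both arguments insert the intermediate term $|B[i,j]|/r_i^A$, apply the reverse triangle inequality to the entries and to the row sums, and use $\sum_j |B[i,j]| = r_i^B$ to obtain the per-row bound $2\delta_i/r_i^A \le 2\|A-B\|_\infty/r^A_{\min}$.
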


\begin{proof}
	Write $r_i^A = \sum_l |A[i,l]|$ and $r_i^B = \sum_l |B[i,l]|$ for the row sums. For each row $i$:
	\begin{align*}
		\mathcal{R}(A)[i,j] - \mathcal{R}(B)[i,j] &= \frac{|A[i,j]|}{r_i^A} - \frac{|B[i,j]|}{r_i^B} \\
		&= \frac{|A[i,j]| - |B[i,j]|}{r_i^A} + |B[i,j]|\left(\frac{1}{r_i^A} - \frac{1}{r_i^B}\right) \\
		&= \frac{|A[i,j]| - |B[i,j]|}{r_i^A} + \frac{|B[i,j]|(r_i^B - r_i^A)}{r_i^A \cdot r_i^B}.
	\end{align*}
	Summing moduli over $j$:
	\begin{align*}
		\sum_j \left|\mathcal{R}(A)[i,j] - \mathcal{R}(B)[i,j]\right| &\leq \frac{1}{r_i^A}\sum_j \bigl||A[i,j]| - |B[i,j]|\bigr| + \frac{|r_i^A - r_i^B|}{r_i^A \cdot r_i^B}\sum_j |B[i,j]| \\
		&\leq \frac{\delta_i}{r_i^A} + \frac{\delta_i}{r_i^A},
	\end{align*}
	where $\delta_i = \sum_j |A[i,j] - B[i,j]|$ is the $i$-th row sum of $|A - B|$. In the second inequality, we used $\bigl||A[i,j]| - |B[i,j]|\bigr| \leq |A[i,j] - B[i,j]|$ (reverse triangle inequality), the bound $|r_i^A - r_i^B| \leq \delta_i$ (same argument applied to row sums), and $\sum_j |B[i,j]| = r_i^B$. Taking the maximum over $i$:
	\begin{equation*}
		\norm{\mathcal{R}(A) - \mathcal{R}(B)}_\infty \leq \max_i \frac{2\delta_i}{r_i^A} \leq \frac{2}{r^A_{\min}} \max_i \delta_i = \frac{2}{r^A_{\min}} \norm{A-B}_\infty. \qedhere
	\end{equation*}
\end{proof}

Next, the resolvent perturbation bound. The identity $(I - \alpha S')^{-1} - (I - \alpha S)^{-1} = (I - \alpha S)^{-1}\alpha(S'-S)(I - \alpha S')^{-1}$ shows that the PPR difference factors through the matrix difference $S' - S$, with the resolvent norms contributing the $\alpha/(1-\alpha)$ amplification factor.

\begin{lemma}[Resolvent perturbation bound]\label{lem:pagerank-perturbation}
	Let $S, S'$ be nonnegative matrices with $\|S\|_\infty \leq 1$,
	$\|S'\|_\infty \leq 1$, and $\alpha \in (0,1)$. For any
	$\mathbf{s} \geq 0$ with $\|\mathbf{s}\|_1 = 1$, define
	\[
	\mathbf{q}^\tp := (1-\alpha)\mathbf{s}^\tp(I-\alpha S)^{-1} \quad \arXivtag[\text{and}]
	\quad
	(\mathbf{q}')^\tp := (1-\alpha)\mathbf{s}^\tp(I-\alpha S')^{-1}.
	\]
	Then $\|\mathbf{q} - \mathbf{q}'\|_1 \leq
	\frac{\alpha}{1-\alpha}\|S - S'\|_\infty$.
\end{lemma}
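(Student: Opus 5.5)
The plan is to compare the two vectors through the second resolvent identity. Since $\|S\|_\infty\le 1$, $\|S'\|_\infty\le 1$ and $\alpha<1$, the Neumann series show that both $I-\alpha S$ and $I-\alpha S'$ are invertible. They also give
\[
\|(I-\alpha S)^{-1}\|_\infty \le \sum_{k\ge0}\alpha^k\|S\|_\infty^k \le \tfrac{1}{1-\alpha},
\]
and the same bound holds for $S'$. Write $R=(I-\alpha S)^{-1}$ and $R'=(I-\alpha S')^{-1}$. Then
\[
R' - R = R\,\bigl[(I-\alpha S)-(I-\alpha S')\bigr]\,R' = \alpha\,R\,(S'-S)\,R' .
\]
Multiplying on the left by $(1-\alpha)\mathbf{s}^\tp$ gives
\[
(\mathbf{q}')^\tp-\mathbf{q}^\tp = \alpha\,\mathbf{q}^\tp (S'-S)R'.
\]
This form is convenient because $\mathbf{q}$ itself appears on the right.

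Next we bound row vectors in $\ell^1$. For a row vector $\mathbf{x}^\tp$ and a matrix $A$ we have $\|\mathbf{x}^\tp A\|_1\le \|\mathbf{x}\|_1\|A\|_\infty$, because the $\ell^1$ norm of $\mathbf{x}^\tp A$ is at most $\sum_v |x_v|\sum_j|A[v,j]|$. Applying this twice,
\[
\|\mathbf{q}-\mathbf{q}'\|_1 \le \alpha\,\|\mathbf{q}\|_1\,\|S'-S\|_\infty\,\|R'\|_\infty \le \alpha\,\|\mathbf{q}\|_1\,\frac{\|S-S'\|_\infty}{1-\alpha}.
\]

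It remains to show $\|\mathbf{q}\|_1\le 1$. Since $S\ge0$ entrywise, $R=\sum_k\alpha^kS^k\ge0$, and together with $\mathbf{s}\ge0$ this gives $\mathbf{q}\ge0$. Hence
\[
\|\mathbf{q}\|_1=\mathbf{q}^\tp\mathbf{1}=(1-\alpha)\mathbf{s}^\tp R\mathbf{1} \le (1-\alpha)\|\mathbf{s}\|_1\|R\|_\infty\le 1,
\]
with equality when $S$ is row-stochastic. Substituting this into the previous display yields the claim $\|\mathbf{q}-\mathbf{q}'\|_1\le\frac{\alpha}{1-\alpha}\|S-S'\|_\infty$.

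The only point needing care is to choose the correct ordering in the resolvent identity. The factor that gets bounded by its $\ell^1$ norm must be $\mathbf{q}$, with norm at most $1$, and not $\mathbf{s}^\tp R'$, so that exactly one factor $1/(1-\alpha)$ survives. With the ordering above, this happens automatically.
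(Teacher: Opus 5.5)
Your proof is correct and follows essentially the same route as the paper's. Both use the resolvent identity written as $(\mathbf{q}')^\tp-\mathbf{q}^\tp=\alpha\,\mathbf{q}^\tp(S'-S)(I-\alpha S')^{-1}$, the bound $\|\mathbf{x}^\tp A\|_1\le\|\mathbf{x}\|_1\|A\|_\infty$, the Neumann-series estimate $\|(I-\alpha S')^{-1}\|_\infty\le 1/(1-\alpha)$, and $\|\mathbf{q}\|_1\le 1$. Your appeal to nonnegativity when bounding $\|\mathbf{q}\|_1$ is harmless but not needed, since the same submultiplicative bound applied to $\mathbf{s}^\tp(I-\alpha S)^{-1}$ already gives it, which is how the paper argues.
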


\begin{proof}
	Since $\|S\|_\infty \leq 1$ and $\|S'\|_\infty \leq 1$ with $\alpha < 1$,
	the Neumann series $(I-\alpha S)^{-1} = \sum_{k \geq 0}\alpha^k S^k$ and
	$(I-\alpha S')^{-1} = \sum_{k \geq 0}\alpha^k (S')^k$ both converge
	absolutely, so $\mathbf{q}$ and $\mathbf{q}'$ are well defined.
	Using the resolvent identity
	$(I - \alpha S')^{-1} - (I - \alpha S)^{-1}
	= (I - \alpha S)^{-1}\cdot \alpha(S' - S)\cdot (I - \alpha S')^{-1}$,
	we obtain
	\begin{equation}\label{eq:pi-diff-resolvent}
		(\mathbf{q}')^\tp - \mathbf{q}^\tp
		= \alpha \cdot \mathbf{q}^\tp \cdot (S' - S) \cdot (I - \alpha S')^{-1}.
	\end{equation}
	For any row vector $\mathbf{v}^\tp \in \R^{1 \times n}$ and matrix
	$A \in \R^{n \times n}$, the $\ell^1$-norm satisfies
	\begin{equation}\label{eq:ell1-submult}
		\norm{\mathbf{v}^\tp A}_1
		= \sum_j \left|\sum_i v_i A_{ij}\right|
		\leq \sum_j \sum_i |v_i| |A_{ij}|
		= \sum_i |v_i| \sum_j |A_{ij}|
		\leq \norm{\mathbf{v}}_1 \norm{A}_\infty.
	\end{equation}
	Since $\|S'\|_\infty \leq 1$, we have $\|(S')^k\|_\infty \leq 1$
	for all $k \geq 0$, so
	$\norm{(I - \alpha S')^{-1}}_\infty \leq \sum_{k=0}^\infty \alpha^k
	= \frac{1}{1-\alpha}$.
	Similarly, $\|(I-\alpha S)^{-1}\|_\infty \leq \frac{1}{1-\alpha}$,
	which gives
	\[
	\|\mathbf{q}\|_1
	= (1-\alpha)\|\mathbf{s}^\tp(I-\alpha S)^{-1}\|_1
	\leq (1-\alpha)\|\mathbf{s}\|_1 \cdot \frac{1}{1-\alpha} = 1.
	\]
	Applying \eqref{eq:ell1-submult} twice to \eqref{eq:pi-diff-resolvent}:
	\begin{align*}
		\norm{\mathbf{q}' - \mathbf{q}}_1
		&= \norm{\alpha \cdot \mathbf{q}^\tp(S' - S)(I - \alpha S')^{-1}}_1 \\
		&\leq \alpha \cdot \norm{\mathbf{q}^\tp(S'-S)}_1
		\cdot \norm{(I-\alpha S')^{-1}}_\infty \\
		&\leq \alpha \cdot \underbrace{\norm{\mathbf{q}}_1}_{\leq\,1}
		\cdot \norm{S' - S}_\infty \cdot \frac{1}{1-\alpha}
		\;\leq\; \frac{\alpha}{1-\alpha}\norm{S'-S}_\infty. \qedhere
	\end{align*}
\end{proof}

As a corollary, specializing Lemma~\ref{lem:pagerank-perturbation} to $S = P^{(0)}$ and $S' = P$ yields the perturbation bound used in the end-to-end detection theorem (Theorem~\ref{thm:end-to-end}).
	\begin{lemma}[Perturbation bound for PR/PPR vectors on $P$ and $P^{(0)}$]\label{lem:pr-ppr-perturbation}
		Let $P = \mathcal{R}(|K_{\tilde{N},M}^\tp|)$ be row stochastic and $P^{(0)} := \mathcal{R}(|K_{\tilde{N},M}^{(0)}|^\tp)$ be the row-stochastic matrix obtained from $P$ by zeroing its top-right block $P^{12}$ and renormalizing the \arXivtag[rows in the top block]. For any $\mathbf{s} \geq 0$ with $\|\mathbf{s}\|_1 = 1$, define the PR/PPR vectors $\pi_{\mathbf{s},M}^\tp := (1-\alpha)\mathbf{s}^\tp(I - \alpha P)^{-1}$ and $(\pi_{\mathbf{s},M}^{(0)})^\tp := (1-\alpha)\mathbf{s}^\tp(I - \alpha P^{(0)})^{-1}$. Then $\|P - P^{(0)}\|_\infty = 2\|P^{12}\|_\infty$, and
		\begin{equation}\label{eq:shared-perturbation}
			\|\pi_{\mathbf{s},M} - \pi_{\mathbf{s},M}^{(0)}\|_1 \;\leq\; \frac{2\alpha}{1-\alpha}\|P^{12}\|_\infty.
		\end{equation}
	\end{lemma}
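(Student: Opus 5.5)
The plan has two parts: compute $\|P - P^{(0)}\|_\infty$ exactly, then apply Lemma~\ref{lem:pagerank-perturbation} with $S = P^{(0)}$ and $S' = P$.

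\emph{Step 1: the norm identity.} Both matrices are row stochastic and nonnegative, so $\|P\|_\infty = \|P^{(0)}\|_\infty = 1$ and the hypotheses of Lemma~\ref{lem:pagerank-perturbation} hold. By construction the bottom blocks agree, $P^{21,(0)} = P^{21}$ and $P^{22,(0)} = P^{22}$. Hence every row $i > N$ of $P - P^{(0)}$ vanishes, and the maximum row sum is attained over the top rows $i \leq N$.

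Fix a top row $i \leq N$ and let $\epsilon_i := \sum_{j>N} P[i,j]$, so that $\epsilon_i$ is the $i$-th row sum of $P^{12}$. The top-left part of the row of $P$ sums to $1-\epsilon_i$, and renormalizing gives $P^{11,(0)}[i,j] = P^{11}[i,j]/(1-\epsilon_i)$. The row-$i$ deviation then splits into two pieces.
\begin{itemize}
\item[] The top-right entries contribute $\sum_{j>N} |P[i,j] - 0| = \epsilon_i$.
\item[] The top-left entries contribute
\[
\sum_{j\leq N} P^{11}[i,j]\Bigl(\tfrac{1}{1-\epsilon_i} - 1\Bigr) = (1-\epsilon_i)\cdot\tfrac{\epsilon_i}{1-\epsilon_i} = \epsilon_i .
\]
\end{itemize}
The row total is therefore exactly $2\epsilon_i$. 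Maximizing over $i$ gives $\|P - P^{(0)}\|_\infty = 2\max_i \epsilon_i = 2\|P^{12}\|_\infty$.

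\emph{Step 2: the perturbation bound.} Lemma~\ref{lem:pagerank-perturbation} gives
\[
\|\pi_{\mathbf{s},M} - \pi^{(0)}_{\mathbf{s},M}\|_1 \leq \tfrac{\alpha}{1-\alpha}\|P - P^{(0)}\|_\infty .
\]
Substituting Step 1 yields \eqref{eq:shared-perturbation}. The bound holds uniformly in $\mathbf{s}$, so it covers PR and PPR alike.

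\emph{Main obstacle.} The only delicate point is that the renormalization must be well defined, i.e.\ $\epsilon_i < 1$ for every top row. If some top row has $P^{11}$ identically zero, then $\mathcal{R}$ cannot normalize it. Such rows are discarded, as Step~2 of Algorithm~\ref{algor:PR-EDMD} stipulates for unnormalizable rows. Equivalently, one assumes the rows of $|K^{(0)}_{\tilde N,M}|^\tp$ are nonzero, which is implicit in $P^{(0)}$ being row stochastic as the statement asserts. Apart from this caveat, the argument is a direct calculation.
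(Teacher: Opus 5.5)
Your proposal is correct and matches the paper's proof. Both do the same row-by-row computation: in each top row, the zeroed block contributes $\epsilon_i$ and the renormalization of the top-left block contributes another $\epsilon_i$, giving $\|P-P^{(0)}\|_\infty = 2\|P^{12}\|_\infty$, and then both invoke Lemma~\ref{lem:pagerank-perturbation}. You write the calculation in normalized form via $\epsilon_i$ where the paper uses the raw entries, and you assign $S = P^{(0)}$, $S' = P$ where the paper takes $S = P$, $S' = P^{(0)}$; neither difference matters, since that lemma's bound is symmetric in $S$ and $S'$.
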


\begin{proof}
	$P$ and $P^{(0)}$ agree on rows $i > N$. For $i \leq N$, write $r_i = \sum_l |K_{\tilde{N},M}^\tp[i,l]|$, $\arXivtag[a_{ij}] = |K_{\tilde{N},M}^\tp[i,j]|$ for $j \leq N$, and $\arXivtag[b_{ij}] = |K_{\tilde{N},M}^\tp[i,j]|$ for $j > N$. Then $P[i,j] = \arXivtag[a_{ij}]/r_i$ and $P[i,j] = \arXivtag[b_{ij}]/r_i$ on the two blocks, while $P^{(0)}[i,j] = \arXivtag[a_{ij}]/(r_i - \sum_{k} \arXivtag[b_{ik}])$ for $j \leq N$ and $0$ for $j > N$ \arXivtag[ where $\sum_{k}$ ranges over $k\in\{N+1,\ldots,\tilde{N}\}$.] A direct calculation gives
		\[
		\sum_{j \leq N}|P[i,j] - P^{(0)}[i,j]| = \sum_{j \leq N} \arXivtag[a_{ij}]\,\frac{\sum_{k} \arXivtag[b_{ik}]}{r_i(r_i-\sum_{k} \arXivtag[b_{ik}])} = \frac{\sum_{k} \arXivtag[b_{ik}]}{r_i}
		\]
        and
        \[
        \sum_{j > N}|P[i,j] - P^{(0)}[i,j]| = \frac{\sum_{k} \arXivtag[b_{ik}]}{r_i},
        \]
		so the row-$i$ $\ell^1$ difference is $2\sum_{k} \arXivtag[b_{ik}]/r_i = 2\|P^{12}_{i,:}\|_1$. 
        Maximizing over $i \leq N$ gives $\|P - P^{(0)}\|_\infty = 2\|P^{12}\|_\infty$. Both $P$ and $P^{(0)}$ are row stochastic so $\|\cdot\|_\infty \leq 1$; Lemma~\ref{lem:pagerank-perturbation} applies with \arXivtag[$S = P$ and $S' = P^{(0)}$], giving $\|\pi_{\mathbf{s},M} - \pi_{\mathbf{s},M}^{(0)}\|_1 \leq \frac{\alpha}{1-\alpha}\|P-P^{(0)}\|_\infty = \frac{2\alpha}{1-\alpha}\|P^{12}\|_\infty$. \qedhere
\end{proof}

\subsection{Proof of detection theorem}\label{sec:proofs-detection}

Now using the perturbation bound in Lemma \ref{lem:pr-ppr-perturbation}, we obtain the following abstract detection criterion. 

\begin{lemma}[Abstract detection criterion]\label{lem:abstract-detection}
	Let ${\pi}_{\mathbf{s}, M}$ and ${\pi}_{\mathbf{s}, M}^{(0)}$ be as in Lemma~\ref{lem:pr-ppr-perturbation}, with gap $\Delta_{\mathbf{s}} = \min_{i \leq N}\pi_{\mathbf{s}, M}^{(0)}(i) - \max_{j > N}\pi_{\mathbf{s}, M}^{(0)}(j)$. If $\Delta_{\mathbf{s}} > 0$ and
	\begin{equation}\label{eq:abstract-gap-condition}
		\|P^{12}\|_\infty \;<\; \frac{1-\alpha}{4\alpha}\cdot \Delta_{\mathbf{s}},
	\end{equation}
	then $\min_{i \leq N}\pi_{\mathbf{s}, M}(i) > \max_{j > N}\pi_{\mathbf{s}, M}(j)$, so Algorithm~\ref{algor:PR-EDMD} (using ${\pi}_{\mathbf{s}, M}$) correctly identifies $\{1,\ldots,N\}$ as the top-$N$ coordinates. In particular, this holds for standard PR ($\mathbf{s} = \mathbf{1}/\tilde{N}$, $\Delta_{\mathbf{s}} = \Delta^{(0)}_{\textnormal{PR}}$) and multi-seed PPR ($\mathbf{s} = |\mathcal{S}|^{-1}\sum_{v \in \mathcal{S}}\mathbf{e}_v$ with $\mathcal{S} \subset \{1,\ldots,N\}$, $\Delta_{\mathbf{s}} = \Delta^{(0)}_{\textnormal{PPR}}$).
\end{lemma}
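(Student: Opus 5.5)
The plan is a triangle-inequality argument in $\ell^\infty$ built on the $\ell^1$ perturbation bound of Lemma~\ref{lem:pr-ppr-perturbation}. Set $\delta := \|\pi_{\mathbf{s},M} - \pi^{(0)}_{\mathbf{s},M}\|_1$. By \eqref{eq:shared-perturbation} we have $\delta \le \frac{2\alpha}{1-\alpha}\|P^{12}\|_\infty$. Under hypothesis \eqref{eq:abstract-gap-condition} this gives $\delta < \frac{2\alpha}{1-\alpha}\cdot\frac{1-\alpha}{4\alpha}\Delta_{\mathbf{s}} = \Delta_{\mathbf{s}}/2$. The remaining work is to check that a uniform coordinatewise perturbation of size less than $\Delta_{\mathbf{s}}/2$ cannot reverse the ordering between the two blocks.

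Since $|x_k|\le\|x\|_1$ for every coordinate $k$, each entry of $\pi_{\mathbf{s},M}$ lies within $\delta$ of the corresponding entry of $\pi^{(0)}_{\mathbf{s},M}$. For every $i\le N$ this gives $\pi_{\mathbf{s},M}(i)\ge \pi^{(0)}_{\mathbf{s},M}(i)-\delta \ge \min_{i'\le N}\pi^{(0)}_{\mathbf{s},M}(i')-\delta$. For every $j>N$ it gives $\pi_{\mathbf{s},M}(j)\le \max_{j'>N}\pi^{(0)}_{\mathbf{s},M}(j')+\delta$. Subtracting the two bounds,
\[
\min_{i\le N}\pi_{\mathbf{s},M}(i)-\max_{j>N}\pi_{\mathbf{s},M}(j)\;\ge\;\Delta_{\mathbf{s}}-2\delta\;>\;0 .
\]
(Using $\|\cdot\|_1\ge$ the sum of the two affected coordinate errors would sharpen this to $\Delta_{\mathbf{s}}-\delta$, but the cruder bound already matches the factor $4$ in \eqref{eq:abstract-gap-condition}.) Strict positivity means every index in $\{1,\dots,N\}$ has a strictly larger score than every index outside it. Hence the top-$N$ selection in Step~3 of Algorithm~\ref{algor:PR-EDMD} returns exactly $\{1,\dots,N\}$, with no ties to break.

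For the two special cases, I would note that Lemma~\ref{lem:pr-ppr-perturbation} holds for an arbitrary preference vector $\mathbf{s}$. Taking $\mathbf{s}=\mathbf{1}/\tilde N$ makes $\Delta_{\mathbf{s}}$ literally the definition \eqref{eq:delta0-defs} of $\Delta^{(0)}_{\textnormal{PR}}$. Taking $\mathbf{s}=|\mathcal{S}|^{-1}\sum_{v\in\mathcal{S}}\mathbf{e}_v$ gives $\Delta^{(0)}_{\textnormal{PPR}}$. When $\mathcal{S}\subset\{1,\dots,N\}$, Lemma~\ref{lem:ppr-combined} additionally identifies this gap with the closed form \eqref{eq:PPR-gap-lemma}, though the criterion itself does not need that identification.

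I expect no real obstacle here; the only point needing care is the passage from the $\ell^1$ bound to coordinatewise control, which is immediate.
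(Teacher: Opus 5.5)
Your proof is correct and follows essentially the same route as the paper: you use Lemma~\ref{lem:pr-ppr-perturbation} to get $\|\pi_{\mathbf{s},M}-\pi^{(0)}_{\mathbf{s},M}\|_1 \le \frac{2\alpha}{1-\alpha}\|P^{12}\|_\infty < \Delta_{\mathbf{s}}/2$, pass to coordinatewise control, and conclude that the block ordering survives, with the PR and PPR cases covered because that lemma holds for every preference vector. Your side remark is also valid: the coordinates $i\le N$ and $j>N$ share a single $\ell^1$ budget, which gives the sharper margin $\Delta_{\mathbf{s}}-\delta$ and would let the constant $4$ in the hypothesis be relaxed to $2$.
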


\begin{proof}
	Algorithm~\ref{algor:PR-EDMD} correctly identifies $\{1,\ldots,N\}$ as the top-$N$ coordinates when $\min_{i\leq N}\pi_{\mathbf{s},M}(i) > \max_{j>N}\pi_{\mathbf{s},M}(j)$. Since
	\begin{equation*}
		\pi_{\mathbf{s},M}(i) \geq \pi_{\mathbf{s},M}^{(0)}(i) - \norm{{\pi}_{\mathbf{s},M} - {\pi}_{\mathbf{s},M}^{(0)}}_\infty, \qquad \pi_{\mathbf{s},M}(j) \leq \pi_{\mathbf{s},M}^{(0)}(j) + \norm{{\pi}_{\mathbf{s},M} - {\pi}_{\mathbf{s},M}^{(0)}}_\infty,
	\end{equation*}
	it is sufficient to have $\norm{{\pi}_{\mathbf{s},M} - {\pi}_{\mathbf{s},M}^{(0)}}_\infty < \Delta_{\mathbf{s}}/2$, and a fortiori $\norm{{\pi}_{\mathbf{s},M} - {\pi}_{\mathbf{s},M}^{(0)}}_1 < \Delta_{\mathbf{s}}/2$. By Lemma~\ref{lem:pr-ppr-perturbation},
	\begin{equation*}
		\norm{{\pi}_{\mathbf{s},M} - {\pi}_{\mathbf{s},M}^{(0)}}_1 \leq \frac{2\alpha}{1-\alpha}\norm{P^{12}}_\infty,
	\end{equation*}
	so the sufficient condition $\norm{{\pi}_{\mathbf{s},M} - {\pi}_{\mathbf{s},M}^{(0)}}_1 < \Delta_{\mathbf{s}}/2$ is implied by \eqref{eq:abstract-gap-condition}. Standard PR and multi-seed PPR follow by linearity of $\mathbf{s}\mapsto{\pi}_{\mathbf{s},M}$. \qedhere
\end{proof}

With Lemma~\ref{lem:abstract-detection} in hand, the detection theorem follows by substituting the closed-form gap lower bounds from the preceding subsection.

\begin{proof}[Proof of Theorem~\ref{thm:detection}]
	Following the main-text sketch (\S\ref{sec:lemmas}), Lemma~\ref{lem:abstract-detection} reduces detection on $P$ to a sufficient condition on $\|P^{12}\|_\infty$ in terms of the auxiliary gap on $P^{(0)}$: for either method,
		\begin{equation}\label{eq:detection-reduction-proof}
			\Delta_{\textnormal{method}} > 0 \;\Longleftarrow\; \|P^{12}\|_\infty \;<\; \frac{1-\alpha}{4\alpha}\,\Delta^{(0)}_{\textnormal{method}}.
		\end{equation}
		It remains to substitute explicit lower bounds on $\Delta^{(0)}_{\textnormal{method}}$ to derive the conditions stated in Theorem~\ref{thm:detection}.
	
	\emph{Part (i): Standard PR.} Under the mixing condition---equivalent to positivity of the bracket in \eqref{eq:pr-gap-condition-explicit}---Lemma~\ref{lem:pagerank-combined} gives the explicit lower bound
		\begin{equation*}
			\Delta^{(0)}_{\textnormal{PR}} \;\geq\; \frac{N}{\tilde{N}}\left[q_{\min}^\alpha - \frac{(\tilde{N}-N)(1-\alpha)}{N(1-\alpha+\alpha\eta)}\right] \;>\; 0,
		\end{equation*}
		so substituting into \eqref{eq:detection-reduction-proof} yields \eqref{eq:pr-gap-condition-explicit}.
	
	\emph{Part (ii): Multi-seed PPR.} Under the reachability hypothesis, Lemma~\ref{lem:ppr-combined} gives the closed-form lower bound
		\begin{equation*}
			\Delta^{(0)}_{\textnormal{PPR}} \;=\; \min_{i\leq N}\frac{1-\alpha}{|\mathcal{S}|}\sum_{v\in\mathcal{S}}\bigl[(I - \alpha P^{11,(0)})^{-1}\bigr]_{vi} \;>\; 0,
		\end{equation*}
		so substituting into \eqref{eq:detection-reduction-proof} yields \eqref{eq:ppr-gap-condition-explicit}.
	
	\emph{Invariant subspace case.} When $\Fc_N$ is $\Koop$-invariant, Theorem~\ref{thm:blockstr} gives the bottom-left block $K_{\tilde{N},M}^{21} = O$, hence $P^{12} = O$, so \eqref{eq:ppr-gap-condition-explicit} holds trivially. By Lemma~\ref{lem:pr-ppr-perturbation}, ${\pi}_{\mathcal{S}, M} = {\pi}_{\mathcal{S}, M}^{(0)}$, and by Lemma~\ref{lem:ppr-combined}, $\pi_{\mathcal{S}, M}^{(0)}(j) = 0$ for all $j > N$; hence $\pi_{\mathcal{S}, M}(j) = 0$ for all $j > N$. \qedhere
\end{proof}

\section{Finite-sample extensions}\label{sec:proofs-finite}

\emph{Goal.} This subsection states and proves the EDMD concentration bound (Proposition~\ref{prop:finite-sample-edmd}) and proves the end-to-end PR/PPR perturbation bound (Theorem~\ref{thm:end-to-end}). The PR/PPR sample complexity (Corollary~\ref{cor:sample-complexity}) and the finite-sample leakage bound (Corollary~\ref{cor:finite-sample-leakage}) are proved in \S\ref{app:finite-sample} below, combining Proposition~\ref{prop:finite-sample-edmd} with the row-normalization and PPR perturbation lemmas of Appendix~\ref{sec:detection_proofs}.

\emph{Strategy.} A single triangle inequality organizes the argument:
\[
\|{\pi}_M - \pi_{\textnormal{pop}}^{(0)}\|_1 \;\leq\; \underbrace{\|{\pi}_M - \pi_{\textnormal{pop}}\|_1}_{\text{finite-sample error}} \;+\; \underbrace{\|\pi_{\textnormal{pop}} - \pi_{\textnormal{pop}}^{(0)}\|_1}_{\text{population approximation error}},
\]
where $\pi_{\textnormal{pop}}$ is the PR (or PPR) vector of the row-normalized \arXivtag[population-level matrix $P_{\textnormal{pop}} = \mathcal{R}(|K_{\tilde{N}}^\tp|)$].
The population term is controlled by $\varepsilon_0 = \|K_{\tilde{N}}^\tp - (K_{\tilde{N}}^{(0)})^\tp\|_\infty$ through the shared perturbation bound (Lemma~\ref{lem:pr-ppr-perturbation}). The finite-sample term is controlled by $\varepsilon_M = \|K_{\tilde{N},M}^\tp - K_{\tilde{N}}^\tp\|_\infty$, which we obtain from matrix Bernstein (Proposition~\ref{prop:finite-sample-edmd}) and then plug into the same two-step composition (row-normalization stability + resolvent perturbation) used in the population case. The sample complexity and leakage corollaries then follow by inverting $\varepsilon_M = O(1/\sqrt{M})$ against the relevant gap.

We begin by stating the EDMD concentration bound, which supplies $\varepsilon_M$ via matrix Bernstein applied to the Gram and cross-covariance matrices. The boundedness assumption (Assumption~\ref{assume:bounded-dict}) activates the concentration machinery.

\paragraph{Relation to existing finite-sample EDMD bounds.}
Concentration-type finite-sample guarantees for EDMD have been developed in several complementary settings. Korda and Mezi\'c~\cite{korda2018convergence} established in-probability convergence of $K_{N,M}$ to the $L^2$-projected Koopman operator as $M\to\infty$, without an explicit rate. Quantitative rates appeared first for the Gram/cross-covariance blocks under sub-Gaussian and boundedness hypotheses: Llamazares-Elias et al.~\cite{elias2024datadriven},  N\"uske et al.~\cite{nuske2023finite}, Kostic et al.~\cite{Kostic-NeuRips2023} give operator-norm bounds of order $\arXivtag[1/\sqrt{M}]$ with explicit constants.  
The proposition below specializes such arguments to the infinity-norm quantity required by the row-normalization and PPR perturbation machinery of this paper.

\begin{proposition}[EDMD concentration]\label{prop:finite-sample-edmd}
	Let $\Dc_{\tilde{N}}$ be a linearly independent set satisfying Assumption~\ref{assume:bounded-dict} with constant $D > 0$.
	Let $\lambda_{\min}$ be the minimum of the moduli of the eigenvalues of the Gram matrix $G_{\tilde{N}} \coloneqq [\langle \psi_i,  \psi_j\rangle_{L^2(\mu)}]_{i,j=1}^{\tilde{N}} \in \C^{\tilde N \by \tilde N}$.  Let $\rho \in (0, 1/2)$.
	Then for $M \geq \frac{32\,\tilde{N}^2 D^4}{\lambda_{\min}^2}\log\frac{2\tilde{N}}{\rho}$, with probability $\geq 1 - 2\rho$:
	\begin{equation}\label{eq:finite-sample-EDMD-bound}
		\|K_{\tilde{N},M}^\tp - K_{\tilde{N}}^\tp\|_\infty \leq \varepsilon_M
	\end{equation}
	where
	\begin{equation}\label{eq:epsM-def}
		\varepsilon_M = C_{\text{EDMD}}\sqrt{2\log(2\tilde{N}/\rho)/M} \quad \text{and} \quad C_{\text{EDMD}} = \frac{4\tilde{N}^{3/2}D^2}{\lambda_{\min}}\!\left(1 + \frac{\|G_{\tilde{N}}\|_2}{\lambda_{\min}}\right).
	\end{equation}
\end{proposition}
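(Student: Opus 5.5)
We write the empirical and population Gram and cross-covariance matrices as
\[
\hat G = \tfrac1M \mathbf{\Psi}_{\tilde N}(\X)^*\mathbf{\Psi}_{\tilde N}(\X), \qquad \hat A = \tfrac1M \mathbf{\Psi}_{\tilde N}(\X)^*\mathbf{\Psi}_{\tilde N}(\Y),
\]
with population counterparts $G = G_{\tilde N}$ and $A = [\langle \Koop\psi_j,\psi_i\rangle]$. Then $K_{\tilde N,M} = \hat G^{-1}\hat A$ and $K_{\tilde N} = G^{-1}A$. The plan is to control $\|\hat G - G\|_2$ and $\|\hat A - A\|_2$ by matrix Bernstein, propagate these to $\|K_{\tilde N,M}-K_{\tilde N}\|_2$ through a perturbation identity for $\hat G^{-1}\hat A$, and finally convert to the infinity norm of the transpose. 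For the last step we use $\|X^\tp\|_\infty = \|X\|_1 \le \sqrt{\tilde N}\,\|X\|_2$.

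\emph{Step 1 (concentration).} Each summand $\mathbf{\Psi}(\x^{(m)})^*\mathbf{\Psi}(\x^{(m)}) - G$ is a centered i.i.d.\ matrix. Under Assumption~\ref{assume:bounded-dict}, its operator norm is at most $\|\mathbf{\Psi}(\x)\|_2^2 + \|G\|_2 \le 2\tilde N D^2$. The same bound holds for $\mathbf{\Psi}(\x)^*(\Koop\mathbf{\Psi})(\x) - A$, because $\mathbf{\Psi}(\y) = (\Koop\mathbf{\Psi})(\x)$ is bounded by $D$ entrywise. The variance proxies are at most $(\tilde N D^2)^2$ per sample. For the nonsquare or non-Hermitian $\hat A$ we use the Hermitian dilation. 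Matrix Bernstein (Tropp) then gives, with probability at least $1-\rho$ each,
\[
\|\hat G - G\|_2,\ \|\hat A - A\|_2 \le 2\tilde N D^2\sqrt{2\log(2\tilde N/\rho)/M}
\]
in the regime where the subgaussian term dominates. That regime is ensured by the lower bound on $M$. A union bound gives probability at least $1-2\rho$.

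\emph{Step 2 (invertibility and perturbation).} The sample-size condition $M \ge 32\tilde N^2D^4\lambda_{\min}^{-2}\log(2\tilde N/\rho)$ is exactly what makes $\|\hat G - G\|_2 \le \lambda_{\min}/2$. By Weyl's inequality this gives $\lambda_{\min}(\hat G) \ge \lambda_{\min}/2$, hence $\|\hat G^{-1}\|_2 \le 2/\lambda_{\min}$. We then write
\[
\hat G^{-1}\hat A - G^{-1}A = \hat G^{-1}(\hat A - A) + \hat G^{-1}(G-\hat G)G^{-1}A.
\]
Using $\|G^{-1}A\|_2 \le \|A\|_2/\lambda_{\min}$ and $\|A\|_2 \le \|G\|_2$, this yields
\[
\|K_{\tilde N,M}-K_{\tilde N}\|_2 \le \frac{2}{\lambda_{\min}}\bigl(1+\|G\|_2/\lambda_{\min}\bigr)\cdot 2\tilde N D^2\sqrt{2\log(2\tilde N/\rho)/M}.
\]
The bound $\|A\|_2 \le \|G\|_2$ might need care; a fallback is the crude bound $\|A\|_2 \le \tilde N D^2$, absorbed into the constant. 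Multiplying by $\sqrt{\tilde N}$ from the norm conversion recovers $C_{\rm EDMD} = 4\tilde N^{3/2}D^2\lambda_{\min}^{-1}(1+\|G\|_2/\lambda_{\min})$.

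The main obstacle is bookkeeping the constants in Bernstein so that the sample-size threshold simultaneously (a) places us in the subgaussian regime and (b) gives $\|\hat G - G\|_2 \le \lambda_{\min}/2$. I would first fix the Bernstein form $\sqrt{2v\log(2\tilde N/\rho)/M} + L\log(2\tilde N/\rho)/(3M)$. I would then check that under the stated $M$ the linear term is dominated by the square-root term, so the total is at most twice the square-root term; this is where the factor $32$ should come from. Justifying $\|A\|_2$ against $\|G\|_2$ is the other delicate point: it is natural when $\Koop$ is a contraction, but otherwise I would fall back to the crude bound above.
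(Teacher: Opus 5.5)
Your architecture matches the paper's: an operator-norm bound on $K_{\tilde N,M}-K_{\tilde N}$, then $\|B^\tp\|_\infty=\|B\|_1\le\sqrt{\tilde N}\|B\|_2$. The difference is that the paper never derives the operator-norm bound. It quotes it, with the factor $(1+\|G_{\tilde N}\|_2/\lambda_{\min})$ already in place, from Proposition~5.6 of the cited Llamazares-Elias et al.\ reference, while you rebuild it from matrix Bernstein. Most of that reconstruction is sound:
\begin{itemize}
\item Write $\ell=\log(2\tilde N/\rho)$ and take $t=2\tilde ND^2\sqrt{2\ell/M}$. The Bernstein exponent is then $4\ell/(1+\tfrac43\sqrt{2\ell/M})\ge\ell$ as soon as $M\ge\ell$. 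The stated threshold implies this, since $\lambda_{\min}\le\|\psi_1\|^2\le D^2$.
\item The constant $32$ comes entirely from requiring $\|\hat G-G\|_2\le\lambda_{\min}/2$, not from subgaussian dominance.
\item Minor: the inverted Bernstein tail has linear term $2L\ell/(3M)$, not $L\ell/(3M)$. This is harmless here.
\end{itemize}

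The genuine gap is the step $\|A\|_2\le\|G\|_2$, equivalently $\|K_{\tilde N}\|_2=\|G^{-1}A\|_2\le\|G\|_2/\lambda_{\min}$. The hypotheses do not give it. Cauchy--Schwarz gives only $|d^*Ac|=|\langle\Koop f_c,f_d\rangle|\le\|\Koop\|\,\|f_c\|\,\|f_d\|$, i.e.\ $\|A\|_2\le\|\Koop\|\,\|G\|_2$. The paper assumes only boundedness, and $\|\Koop\|^2=\|d(\mathbf F_{\#}\mu)/d\mu\|_{L^\infty}$ exceeds $1$ whenever $\mu$ is not invariant.

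Here is a concrete failure. Take $\Mc=[0,1]$ with Lebesgue $\mu$, $\mathbf F(x)=x/2$, and the single observable $\psi_1(x)=1-x$, so $D=1$. Then $G=1/3$ and $A=\int_0^1(1-x/2)(1-x)\,dx=5/12$, hence $K_{\tilde N}=5/4>1=\|G\|_2/\lambda_{\min}$.

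Your fallback does not rescue the stated constant. Using $\|A\|_2\le\tilde ND^2$ gives
\[
\tfrac{4\tilde N^{3/2}D^2}{\lambda_{\min}}\bigl(1+\tilde ND^2/\lambda_{\min}\bigr).
\]
Since $\|G\|_2\le\tilde ND^2$, this is larger than $C_{\text{EDMD}}$, so it cannot be ``absorbed.''

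As written, your argument proves the proposition only in one of these forms:
\begin{itemize}
\item with $\|G_{\tilde N}\|_2$ replaced by $\|A\|_2$ or by $\|\Koop\|\,\|G_{\tilde N}\|_2$ in $C_{\text{EDMD}}$; or
\item under an extra hypothesis such as $\|\Koop\|_{L^2(\mu)}\le 1$ (e.g.\ $\mu$ invariant).
\end{itemize}
You can weaken that hypothesis by bounding $\|K_{\tilde N}\|_2$ directly:
\[
\lambda_{\min}\|K_{\tilde N}c\|_2^2\le\|\Pc_{\Fc_{\tilde N}}\Koop f_c\|^2\le\|\Koop\|^2\,\|G\|_2\,\|c\|_2^2 .
\]
This shows $\|\Koop\|\le\sqrt{\|G\|_2/\lambda_{\min}}$ already suffices.

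The paper's proof never faces this step, because it inherits the $\|G_{\tilde N}\|_2$ constant wholesale from the cited proposition. To reach the constant exactly as stated, you must either invoke that result (and check that its hypotheses hold in this setting), as the paper does, or add an assumption of the kind above.
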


\begin{proof}
	Under Assumption~\ref{assume:bounded-dict},
	Proposition~5.6 of~\cite{elias2024datadriven} gives, with probability $\geq 1-2\rho$,
	\[
	\|K_{\tilde{N},M} - K_{\tilde{N}}\|_2 \leq \frac{4\tilde{N}D^2}{\lambda_{\min}}\!\left(1+\frac{\|G_{\tilde{N}}\|_2}{\lambda_{\min}}\right)\sqrt{\frac{2\log(2\tilde{N}/\rho)}{M}}.
	\]
	Since $\|B^\tp\|_\infty \leq \sqrt{\tilde{N}}\|B\|_2$ for any $\tilde{N}\times\tilde{N}$ matrix $B$, this yields \eqref{eq:finite-sample-EDMD-bound} with $C_{\textnormal{EDMD}} = 4\tilde{N}^{3/2}D^2(1+\|G_{\tilde{N}}\|_2/\lambda_{\min})/\lambda_{\min}$.
\end{proof}

Plugging the EDMD bound into the two-step (row-normalization $+$ resolvent) composition yields the end-to-end perturbation bound.

\begin{proof}[Proof of Theorem~\ref{thm:end-to-end}]
	By the triangle inequality:
	\begin{equation*}
		\norm{{\pi}_M - \pi_{\textnormal{pop}}^{(0)}}_1 \leq \norm{{\pi}_M - \pi_{\textnormal{pop}}}_1 + \norm{\pi_{\textnormal{pop}} - \pi_{\textnormal{pop}}^{(0)}}_1.\arXivtag
	\end{equation*}
	
	\textit{Population approximation error.} By Lemma~\ref{lem:pr-ppr-perturbation} applied to $P_{\textnormal{pop}}$,
		\begin{equation}\label{eq:pop-error}
			\|\pi_{\textnormal{pop}} - \pi_{\textnormal{pop}}^{(0)}\|_1 \leq \frac{2\alpha}{1-\alpha}\|P^{12}_{\textnormal{pop}}\|_\infty \leq \frac{2\alpha\,\varepsilon_0}{(1-\alpha)\,r^{(0)}_{\min}},
		\end{equation}
		using $\|P^{12}_{\textnormal{pop}}\|_\infty \leq \varepsilon_0/r^{(0)}_{\min}$.
	\textit{Finite-sample error.} By Proposition~\ref{prop:finite-sample-edmd}, on an event of probability at least $1-2\rho$: $\norm{K_{\tilde{N},M}^\tp - K_{\tilde{N}}^\tp}_\infty \leq \varepsilon_M.$ 
	To apply Lemma~\ref{lem:row-norm-perturbation} with tight constants, we use $K_{\tilde{N}}^\tp$ as the reference matrix and verify that its minimum absolute row sum (equivalently, the minimum absolute column sum of $|K_{\tilde{N}}|$) is at least $r^{(0)}_{\min}$. For columns $i \leq N$, the block structure of $K_{\tilde{N}}^{(0)}$ gives
	\[
	\sum_j |K_{\tilde{N}}^\tp[i,j]| = \sum_j |K_{\tilde{N}}[j,i]| \geq \sum_j |K_{\tilde{N}}^{(0)}[j,i]| = \sum_j |(K_{\tilde{N}}^{(0)})^\tp[i,j]| \geq r^{(0)}_{\min},
	\]
	since $K_{\tilde{N}}$ and $K_{\tilde{N}}^{(0)}$ agree outside the bottom-left block and any additional entries only increase the absolute column sum. For columns $i > N$, $K_{\tilde{N}}$ and $K_{\tilde{N}}^{(0)}$ agree on column $i$, so $\sum_j |K_{\tilde{N}}^\tp[i,j]| = \sum_j |(K_{\tilde{N}}^{(0)})^\tp[i,j]| \geq r^{(0)}_{\min}$.
	Therefore the minimum absolute row sum of $|K_{\tilde{N}}^\tp|$ is at least $r^{(0)}_{\min}$. Applying Lemma~\ref{lem:row-norm-perturbation} with $A = K_{\tilde{N}}^\tp$ and $B = K_{\tilde{N},M}^\tp$ (so $r_{\min}^A \geq r^{(0)}_{\min}$):
	\[
	\|\mathcal{R}(|K_{\tilde{N},M}^\tp|) - \mathcal{R}(|K_{\tilde{N}}^\tp|)\|_\infty
	\leq \frac{2}{r^{(0)}_{\min}}\,\varepsilon_M.
	\]
	Since \arXivtag[$P = \mathcal{R}(|K_{\tilde{N},M}^\tp|)$ and $P_{\textnormal{pop}} = \mathcal{R}(|K_{\tilde{N}}^\tp|)$] are both row stochastic (hence $\|\cdot\|_\infty = 1$), Lemma~\ref{lem:pagerank-perturbation} applies with \arXivtag[$S = P$ and $S' = P_{\textnormal{pop}}$]:
	\begin{equation}\label{eq:sample-error}
		\|{\pi}_M - \pi_{\textnormal{pop}}\|_1 \;\leq\; \frac{\alpha}{1-\alpha}\|P - P_{\textnormal{pop}}\|_\infty
		\;\leq\; \frac{2\alpha\,\varepsilon_M}{(1-\alpha)\,r^{(0)}_{\min}}.
	\end{equation}
	
	Combining \eqref{eq:pop-error} and \eqref{eq:sample-error}:
		\[
		\|\pi_M - \pi_{\textnormal{pop}}^{(0)}\|_1 \leq \frac{2\alpha\,\varepsilon_0}{(1-\alpha)\,r^{(0)}_{\min}} + \frac{2\alpha\,\varepsilon_M}{(1-\alpha)\,r^{(0)}_{\min}} = \frac{2\alpha(\varepsilon_0 + \varepsilon_M)}{(1-\alpha)\,r^{(0)}_{\min}},
		\]
		which is the desired bound 
			\begin{equation}\label{eq:end-to-end-bound}
			\norm{{\pi}_M - \pi_{\textnormal{pop}}^{(0)}}_1 \;\leq\; \frac{2\alpha(\varepsilon_0 + \varepsilon_M)}{(1-\alpha)\,r^{(0)}_{\min}}.
		\end{equation}
\end{proof}

The sample-complexity corollary (Corollary~\ref{cor:sample-complexity}) follows by requiring the right-hand side of \eqref{eq:end-to-end-bound} to be smaller than half the PPR detection gap and inverting $\varepsilon_M = O(1/\sqrt{M})$; its proof, the analogous PR bound (Corollary~\ref{cor:pr-sample-complexity}), and the finite-sample leakage bound (Corollary~\ref{cor:finite-sample-leakage}) are given in \S\ref{app:finite-sample} below.

\subsection{Sample complexity and finite-sample leakage}\label{app:finite-sample}

Building on Theorem~\ref{thm:end-to-end}, we now prove the three finite-sample corollaries: the PPR sample complexity (Corollary~\ref{cor:sample-complexity}, stated in Section~\ref{subsec:end-to-end}), the PR sample complexity (Corollary~\ref{cor:pr-sample-complexity}), and the finite-sample leakage guarantee (Corollary~\ref{cor:finite-sample-leakage}, the empirical analogue of Proposition~\ref{prop:unconditional}). For both sample-complexity corollaries, requiring the total perturbation $\|{\pi}_M - \pi_{\textnormal{pop}}^{(0)}\|_1 < \Delta^{(0)}/2$ (where $\Delta$ is the relevant gap) preserves the top-$N$ ordering; the general bound \eqref{eq:cor-sc-general} is derived once in the PPR proof and then specialized to PR. For Corollary~\ref{cor:finite-sample-leakage}, $\varepsilon_M$ is combined with the population leakage bound via the triangle inequality applied to the PPR mass on $S_N$.

\begin{proof}[Proof of Corollary~\ref{cor:sample-complexity}]
	Let $\pi_{\textnormal{pop}}^{(0)}$ be the PPR vector of $P^{(0)}_{\textnormal{pop}} = \mathcal{R}(|K^{(0)}_{\tilde{N}}|^\tp)$ and ${\pi}_M$ be the PPR vector of $P = \mathcal{R}(|K_{\tilde{N},M}^\tp|)$. 
	
	\emph{Step 1: gap-based identification criterion.} On the event $\{\norm{{\pi}_M - \pi_{\textnormal{pop}}^{(0)}}_1 < \Delta^{(0)}_{\textnormal{PPR}}/2\}$, for any $i \leq N$ and $j > N$,
		\begin{equation*}
			({\pi}_M)_i \;\geq\; (\pi_{\textnormal{pop}}^{(0)})_i - \tfrac{\Delta^{(0)}_{\textnormal{PPR}}}{2}, \qquad ({\pi}_M)_j \;\leq\;(\pi_{\textnormal{pop}}^{(0)})_j + \tfrac{\Delta^{(0)}_{\textnormal{PPR}}}{2},
		\end{equation*}
		since $|({\pi}_M)_k - (\pi_{\textnormal{pop}}^{(0)})_k| \leq \norm{{\pi}_M - \pi_{\textnormal{pop}}^{(0)}}_1 < \Delta^{(0)}_{\textnormal{PPR}}/2$ for every coordinate $k$. By Lemma~\ref{lem:ppr-combined}, $(\pi_{\textnormal{pop}}^{(0)})_i \geq \Delta^{(0)}_{\textnormal{PPR}}$ for all $i \leq N$ and $(\pi_{\textnormal{pop}}^{(0)})_j = 0$ for all $j > N$, so $({\pi}_M)_i - ({\pi}_M)_j > 0$ for every such pair, i.e.\ $\Delta_{\textnormal{PPR}} > 0$ (detection success).
	
	\emph{Step 2: bound $\norm{{\pi}_M - \pi_{\mathrm{pop}}^{(0)}}_1$ in terms of $\|P^{12}_{\mathrm{pop}}\|_\infty$ and $\varepsilon_M$.} Provided $M \geq 32\,\tilde{N}^2 D^4 \log(2\tilde{N}/\rho)/\lambda_{\min}^2$, Theorem~\ref{thm:end-to-end} applies and gives, with probability $\geq 1-2\rho$:
		\begin{equation}\label{eq:cor-sc-eps0}
			\norm{{\pi}_M - \pi_{\mathrm{pop}}^{(0)}}_1 \;\leq\; \frac{2\alpha}{1-\alpha}\,\|P^{12}_{\mathrm{pop}}\|_\infty + \frac{2\alpha\,\varepsilon_M}{(1-\alpha)\,r^{(0)}_{\min}}.
		\end{equation}
		Since $r_{\max} \geq r^{(0)}_{\min}$, the first term satisfies $\frac{2\alpha}{1-\alpha}\|P^{12}_{\mathrm{pop}}\|_\infty \leq \frac{2\alpha\,r_{\max}}{(1-\alpha)\,r^{(0)}_{\min}}\|P^{12}_{\mathrm{pop}}\|_\infty$, so we may combine:
	\begin{equation}\label{eq:cor-sc-total}
		\norm{{\pi}_M - \pi_{\mathrm{pop}}^{(0)}}_1 \;\leq\; \frac{2\alpha}{(1-\alpha)r^{(0)}_{\min}}\bigl(r_{\max}\|P^{12}_{\mathrm{pop}}\|_\infty + \varepsilon_M\bigr).
	\end{equation}
	
	\emph{Step 3: invert against the gap.} Requiring \eqref{eq:cor-sc-total} $< \Delta_{\textnormal{PPR}}^{(0)}/2$, substituting $\varepsilon_M = C_{\text{EDMD}}\sqrt{2\log(2\tilde N/\rho)/M}$ from \eqref{eq:epsM-def}, squaring, and solving for $M$ yields
	\begin{equation}\label{eq:cor-sc-general}
		M \;\geq\; \frac{32\,\alpha^2\log(2\tilde N/\rho)\,C_{\text{EDMD}}^2}{(1-\alpha)^2 (r^{(0)}_{\min})^2\,\bigl(\Delta_{\textnormal{PPR}}^{(0)} - 4\alpha r_{\max}\|P^{12}_{\mathrm{pop}}\|_\infty/((1-\alpha)r^{(0)}_{\min})\bigr)^2}.
	\end{equation}
	Setting $\alpha = 1/2$ so that $\alpha^2/(1-\alpha)^2 = 1$ and $4\alpha/((1-\alpha)r^{(0)}_{\min}) = 4/r^{(0)}_{\min}$ gives the second argument of the max in \eqref{eq:ppr-sample-complexity}; combining with the Step~2 requirement $M \geq 32\,\tilde{N}^2 D^4 \log(2\tilde{N}/\rho)/\lambda_{\min}^2$ yields the full bound.
\end{proof}

\begin{corollary}[Sample complexity for PR detection]\label{cor:pr-sample-complexity}
	Let $\Delta^{(0)}_{\textnormal{PR}}$ be the PR auxiliary gap of Section~\ref{subsec:block-theory} computed from $P^{(0)}_{\textnormal{pop}}$.
		If $\Delta^{(0)}_{\textnormal{PR}} > 0$ and $\|P^{12}_{\mathrm{pop}}\|_\infty < (1-\alpha)r^{(0)}_{\min}\Delta^{(0)}_{\textnormal{PR}}/(4\alpha r_{\max})$, detection ($\Delta_{\textnormal{PR}} > 0$) holds with probability $\geq 1-2\rho$ for
		\begin{equation}\label{eq:pr-sample-complexity}
			M \;\geq\; \max\!\left\{\frac{32\,\tilde{N}^2 D^4}{\lambda_{\min}^2}\log\frac{2\tilde{N}}{\rho},\;\; \frac{32\,\alpha^2\log(2\tilde{N}/\rho)\, C_{\text{EDMD}}^2}{(1-\alpha)^2 (r^{(0)}_{\min})^2\bigl(\Delta^{(0)}_{\textnormal{PR}} - 4\alpha r_{\max}\|P^{12}_{\mathrm{pop}}\|_\infty/((1-\alpha)r^{(0)}_{\min})\bigr)^2}\right\}.
		\end{equation}
		Compared with Corollary~\ref{cor:sample-complexity}, this bound carries an additional $\alpha^2/(1-\alpha)^2$ prefactor. Since $\Delta^{(0)}_{\textnormal{PR}} > 0$ requires the mixing condition of Theorem~\ref{thm:detection}(i), 
		which may force $\alpha$ close to $1$, it is not always possible to set $\alpha = 1/2$. Therefore, the prefactor $\alpha^2/(1-\alpha)^2$ can be significantly larger than $1$. 
\end{corollary}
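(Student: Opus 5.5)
The proof follows the PPR sample-complexity argument (proof of Corollary~\ref{cor:sample-complexity}), replacing the PPR gap by the PR gap and keeping general $\alpha$. It has three steps.

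\emph{Step 1: gap criterion.} Let $\pi_M$ be the standard PR vector of $P=\mathcal{R}(|K_{\tilde N,M}^\tp|)$, and let $\pi^{(0)}_{\textnormal{pop}}$ be that of $P^{(0)}_{\textnormal{pop}}$. By the definition of $\Delta^{(0)}_{\textnormal{PR}}$ in \eqref{eq:delta0-defs}, applied to $P^{(0)}_{\textnormal{pop}}$,
\[
\min_{i\le N}(\pi^{(0)}_{\textnormal{pop}})_i-\max_{j>N}(\pi^{(0)}_{\textnormal{pop}})_j=\Delta^{(0)}_{\textnormal{PR}}.
\]
Unlike PPR, the bottom-block PR masses need not vanish, but this is harmless. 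If $\|\pi_M-\pi^{(0)}_{\textnormal{pop}}\|_1<\Delta^{(0)}_{\textnormal{PR}}/2$, then every coordinate moves by less than $\Delta^{(0)}_{\textnormal{PR}}/2$. Hence $(\pi_M)_i>(\pi_M)_j$ for all $i\le N<j$, which is exactly $\Delta_{\textnormal{PR}}>0$. This is the same argument as in Lemma~\ref{lem:abstract-detection}.

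\emph{Step 2: perturbation bound.} Take $M\ge 32\tilde N^2D^4\log(2\tilde N/\rho)/\lambda_{\min}^2$. Then Proposition~\ref{prop:finite-sample-edmd} holds on an event of probability at least $1-2\rho$, and the proof of Theorem~\ref{thm:end-to-end} applies there. That proof is preference-vector independent, since Lemmas~\ref{lem:row-norm-perturbation}, \ref{lem:pagerank-perturbation} and~\ref{lem:pr-ppr-perturbation} hold for any $\mathbf{s}$, so it covers $\mathbf{s}=\mathbf{1}/\tilde N$. On this event we use the population term in the form $\frac{2\alpha}{1-\alpha}\|P^{12}_{\textnormal{pop}}\|_\infty$ from \eqref{eq:pop-error}, and the sample term $\frac{2\alpha\varepsilon_M}{(1-\alpha)r^{(0)}_{\min}}$ from \eqref{eq:sample-error}. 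Since $r_{\max}\ge r^{(0)}_{\min}$, the two terms combine exactly as in \eqref{eq:cor-sc-total}:
\[
\|\pi_M-\pi^{(0)}_{\textnormal{pop}}\|_1\le\frac{2\alpha}{(1-\alpha)r^{(0)}_{\min}}\bigl(r_{\max}\|P^{12}_{\textnormal{pop}}\|_\infty+\varepsilon_M\bigr).
\]
The only check needed is $r_{\max}\ge r^{(0)}_{\min}$. It holds because $r^{(0)}_{\min}$ is a minimum absolute row sum of $(K^{(0)}_{\tilde N})^\tp$. That matrix is entrywise dominated by $|K^\tp_{\tilde N}|$, whose maximum row sum is $r_{\max}$.

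\emph{Step 3: invert for $M$.} Require the right-hand side above to be less than $\Delta^{(0)}_{\textnormal{PR}}/2$. This is equivalent to
\[
\varepsilon_M<\frac{(1-\alpha)r^{(0)}_{\min}}{4\alpha}\Bigl(\Delta^{(0)}_{\textnormal{PR}}-\frac{4\alpha r_{\max}\|P^{12}_{\textnormal{pop}}\|_\infty}{(1-\alpha)r^{(0)}_{\min}}\Bigr).
\]
The bracket is positive exactly under the hypothesis $\|P^{12}_{\textnormal{pop}}\|_\infty<(1-\alpha)r^{(0)}_{\min}\Delta^{(0)}_{\textnormal{PR}}/(4\alpha r_{\max})$. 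Substitute $\varepsilon_M=C_{\textnormal{EDMD}}\sqrt{2\log(2\tilde N/\rho)/M}$, square both sides, and solve for $M$. This gives the second entry of the max in \eqref{eq:pr-sample-complexity}, identical to \eqref{eq:cor-sc-general} with $\Delta^{(0)}_{\textnormal{PPR}}$ replaced by $\Delta^{(0)}_{\textnormal{PR}}$. Taking the max with the Step 2 threshold finishes the proof.

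No step is a real obstacle. The only points to verify are that Step 1 does not need the vanishing of the bottom-block masses, and the bookkeeping of the factor $r_{\max}/r^{(0)}_{\min}$ in Step 2. The closing remark about the $\alpha^2/(1-\alpha)^2$ prefactor is commentary rather than part of the claim. It follows from Lemma~\ref{lem:pagerank-combined}: positivity of $\Delta^{(0)}_{\textnormal{PR}}$ is only guaranteed under the mixing condition of Theorem~\ref{thm:detection}(i), which may force $\alpha$ near $1$.
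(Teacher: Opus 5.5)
Your proposal is correct and is essentially the paper's proof: the paper specializes the general bound \eqref{eq:cor-sc-general}, derived through the same three steps in the proof of Corollary~\ref{cor:sample-complexity}, to the standard PR vector, and your observation that Step~1 needs only the definition of $\Delta^{(0)}_{\textnormal{PR}}$, not the vanishing of the bottom-block masses, is exactly why that specialization is legitimate. One cosmetic point, shared with the paper: the non-strict threshold $M \geq \cdots$ yields only $\|\pi_M - \pi^{(0)}_{\textnormal{pop}}\|_1 \leq \Delta^{(0)}_{\textnormal{PR}}/2$ rather than a strict inequality, but this still suffices, because for $i\le N<j$ the two coordinate deviations together are bounded by this single $\ell^1$ norm, giving $(\pi_M)_i-(\pi_M)_j\geq \Delta^{(0)}_{\textnormal{PR}}/2>0$.
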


\begin{proof}
	Specialize \eqref{eq:cor-sc-general} (derived in the proof above) to $\pi_{\textnormal{pop}}^{(0)}$ the standard PR vector and $\Delta = \Delta^{(0)}_{\textnormal{PR}}$, giving \eqref{eq:pr-sample-complexity}.
\end{proof}

\begin{corollary}[Finite-sample leakage]\label{cor:finite-sample-leakage}
	Let $\Lambda_{\mathbf{s}}^{\gamma,\textnormal{pop}}(S_N)$ denote the leakage defined with $K_{\tilde{N}} \coloneqq \lim_{M\to\infty} K_{\tilde{N},M}$ as in  \eqref{eq:leakage-ppr} in place of $K_{\tilde{N},M}$. For any $\mathbf{s}$ supported on $S_N$ and any $M \geq \frac{32\,\tilde{N}^2 D^4}{\lambda_{\min}^2}\log\frac{2\tilde{N}}{\rho}$, with probability $\geq 1-2\rho$:
	\begin{equation}\label{eq:finite-sample-leakage}
		\Lambda_{\mathbf{s}}^{\gamma,\textnormal{pop}}(S_N) \coloneqq(1-\gamma)\sum_{k=1}^{\infty} \gamma^k \sum_{j \notin S_N} [\mathbf{s}^\tp  |K_{\tilde{N}}^{\tp}|^k]_j \leq \frac{1-\gamma}{1-\alpha}\!\left(1 - \pi_{\mathbf{s}, M}(S_N) + \frac{2\alpha\varepsilon_M}{(1-\alpha)r^{(0)}_{\min}}\right).
	\end{equation}
\end{corollary}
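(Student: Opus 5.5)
The plan is to apply Proposition~\ref{prop:unconditional} at the population level, and then move from the population PPR vector to the empirical one using the finite-sample perturbation estimate already proved inside Theorem~\ref{thm:end-to-end}.

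\emph{Step 1 (population leakage).} Take $P_{\textnormal{pop}} = \mathcal{R}(|K_{\tilde N}^\tp|)$ and $r_{\max} = \|K_{\tilde N}^\tp\|_\infty$. I will read $\gamma$ in \eqref{eq:finite-sample-leakage} as $\gamma = \alpha/r_{\max}$, assuming $\alpha < r_{\max}$. This is the discount that matches $|K_{\tilde N}^\tp|^k$.

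The proof of Proposition~\ref{prop:unconditional} uses only Lemma~\ref{lem:coupling-bound} and the Neumann series \eqref{eq:ppr-singleseed}. Neither depends on the data, so it applies verbatim with $K_{\tilde N,M}$ replaced by $K_{\tilde N}$. We need all rows of $K_{\tilde N}^\tp$ to be nonzero, and the proof of Theorem~\ref{thm:end-to-end} shows that every row sum of $|K_{\tilde N}^\tp|$ is at least $r^{(0)}_{\min}>0$. This gives
\[
\Lambda_{\mathbf{s}}^{\gamma,\textnormal{pop}}(S_N) \le \frac{1-\gamma}{1-\alpha}\bigl(1-\pi_{\mathbf{s},\textnormal{pop}}(S_N)\bigr),
\]
where $\pi_{\mathbf{s},\textnormal{pop}}$ is the PPR vector of $P_{\textnormal{pop}}$ with preference $\mathbf{s}$.

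\emph{Step 2 (swap population mass for empirical mass).} Work on the event of Proposition~\ref{prop:finite-sample-edmd}, which has probability at least $1-2\rho$ once $M \ge 32\tilde N^2D^4\log(2\tilde N/\rho)/\lambda_{\min}^2$. On this event, the finite-sample estimate \eqref{eq:sample-error} holds. That estimate was obtained from Lemma~\ref{lem:row-norm-perturbation} with reference matrix $K_{\tilde N}^\tp$, followed by Lemma~\ref{lem:pagerank-perturbation}. Both lemmas are uniform in the preference vector, so for our $\mathbf{s}$ it reads
\[
\|\pi_{\mathbf{s},M}-\pi_{\mathbf{s},\textnormal{pop}}\|_1 \le \frac{2\alpha\varepsilon_M}{(1-\alpha)r^{(0)}_{\min}}.
\]
Since $|\pi_{\mathbf{s},M}(S_N)-\pi_{\mathbf{s},\textnormal{pop}}(S_N)| \le \|\pi_{\mathbf{s},M}-\pi_{\mathbf{s},\textnormal{pop}}\|_1$, it follows that $1-\pi_{\mathbf{s},\textnormal{pop}}(S_N) \le 1-\pi_{\mathbf{s},M}(S_N) + 2\alpha\varepsilon_M/((1-\alpha)r^{(0)}_{\min})$. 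Substituting this into Step~1 yields \eqref{eq:finite-sample-leakage}.

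\emph{Main obstacle.} The delicate point is bookkeeping rather than analysis. The discount $\gamma$ has to be tied to the population $r_{\max}$, not to $r_{\max,M}$, because the coupling argument of Lemma~\ref{lem:coupling-bound} must be run on $K_{\tilde N}$ itself. The row-normalization stability step must also use the population matrix as the reference, so that the denominator is $r^{(0)}_{\min}$. If one insisted on $\gamma = \alpha/r_{\max,M}$ instead, an extra step would be needed: compare $r_{\max,M}$ with $r_{\max}$ via $|r_{\max,M}-r_{\max}|\le\varepsilon_M$, and use monotonicity of the leakage sum in $\gamma$. I would avoid this by stating $\gamma=\alpha/r_{\max}$ explicitly.
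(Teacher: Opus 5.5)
Your proposal is correct and follows the paper's proof essentially step for step: the population version of Proposition~\ref{prop:unconditional}, then Lemma~\ref{lem:row-norm-perturbation} and Lemma~\ref{lem:pagerank-perturbation} on the event of Proposition~\ref{prop:finite-sample-edmd}, which trade $\pi_{\mathbf{s}}(S_N)$ for $\pi_{\mathbf{s},M}(S_N)$ at cost $2\alpha\varepsilon_M/((1-\alpha)r^{(0)}_{\min})$. Your bookkeeping is slightly more careful than the paper's: the paper leaves $\gamma=\alpha/r_{\max}$ implicit (``applies verbatim'') and labels the empirical $K_{\tilde N,M}^\tp$ as the reference $A$ in Lemma~\ref{lem:row-norm-perturbation}, although the $r^{(0)}_{\min}$ denominator comes from using the population matrix as reference, as you do; the one flaw is in your aside, because $\Lambda^{\gamma}_{\mathbf{s}}(S_N)$ is not monotone in $\gamma$ in general, but your argument does not use it.
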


\begin{proof}
	Let \arXivtag[$P = \mathcal{R}(|K_{\tilde{N},M}^\tp|)$ and $P_{\textnormal{pop}} = \mathcal{R}(|K_{\tilde{N}}^\tp|)$] denote the population and empirical row-normalized matrices, and let $\pi_{\mathbf{s}}$, $\pi_{\mathbf{s}, M}$ denote their respective PPR vectors with preference $\mathbf{s}$ and damping $\alpha$.
	
	\emph{Step 1: population leakage bound.} The proof of Proposition~\ref{prop:unconditional} applies verbatim with $K_{\tilde{N},M}$ replaced by $K_{\tilde{N}}$ and \arXivtag[$\pi_{\mathbf{s}, M}$ replaced by $\pi_{\mathbf{s}}$], yielding
	\begin{equation}\label{eq:cor-fsl-pop}
		\Lambda_{\mathbf{s}}^{\gamma,\textnormal{pop}}(S_N) \;\leq\; \frac{1-\gamma}{1-\alpha}\bigl(1 - \pi_{\mathbf{s}}(S_N)\bigr),
	\end{equation}
	where $\pi_{\mathbf{s}}$ is the PPR vector of $P_{\textnormal{pop}}$.
	
	\emph{Step 2: bound $\pi_{\mathbf{s}}(S_N) - \pi_{\mathbf{s}, M}(S_N)$.} By Proposition~\ref{prop:finite-sample-edmd}, $\|K_{\tilde{N},M}^\tp - K_{\tilde{N}}^\tp\|_\infty \leq \varepsilon_M$ with probability $\geq 1-2\rho$. Lemma~\ref{lem:row-norm-perturbation} with \arXivtag[$A = K_{\tilde{N},M}^\tp$ and $B = K_{\tilde{N}}^\tp$] gives $\|P - P_{\textnormal{pop}}\|_\infty \leq (2/r^{(0)}_{\min})\varepsilon_M$. Lemma~\ref{lem:pagerank-perturbation} then yields
	\begin{equation}\label{eq:cor-fsl-ppr}
		\|\pi_{\mathbf{s}} - \pi_{\mathbf{s}, M}\|_1 \;\leq\; \frac{2\alpha\,\varepsilon_M}{(1-\alpha)\,r^{(0)}_{\min}},
	\end{equation}
	and since $|\pi_{\mathbf{s}}(S_N) - \pi_{\mathbf{s}, M}(S_N)| \leq \|\pi_{\mathbf{s}} - \pi_{\mathbf{s}, M}\|_1$,
	\begin{equation}\label{eq:cor-fsl-diff}
		\pi_{\mathbf{s}}(S_N) \;\geq\; \pi_{\mathbf{s}, M}(S_N) - \frac{2\alpha\,\varepsilon_M}{(1-\alpha)\,r^{(0)}_{\min}}.
	\end{equation}
	
	\emph{Step 3: combine.} Substituting \eqref{eq:cor-fsl-diff} into \eqref{eq:cor-fsl-pop}:
	\begin{equation*}
		\Lambda_{\mathbf{s}}^{\gamma,\textnormal{pop}}(S_N) \;\leq\; \frac{1-\gamma}{1-\alpha}\Bigl(1 - \pi_{\mathbf{s}, M}(S_N) + \frac{2\alpha\,\varepsilon_M}{(1-\alpha)\,r^{(0)}_{\min}}\Bigr),
	\end{equation*}
	which is \eqref{eq:finite-sample-leakage}.
\end{proof}

\begin{remark}[Practical seed selection]\label{rem:practical}
	The seed set $\mathcal{S}$ should be chosen to reflect the practitioner's prediction objective. Natural choices include state coordinates (when state prediction is the goal), slow observables identified by a preliminary spectral analysis, or any observables whose future evolution is of primary interest. When no domain knowledge is available, one can use standard PR ($\mathcal{S} = \{1,\ldots,\tilde{N}\}$) or run standard PR first to identify candidate seeds, then refine with PPR. The Andersen--Chung--Lang theory~\cite{andersen2006local} provides complementary guarantees via sweep cuts.
\end{remark}

\end{document}